\journal{Journal of \LaTeX\ Templates}
\newcommand{\R}{{\mathbb R}}
\newcommand{\h}{{\Gamma}}   
\newcommand{\bc}{\begin{center}}
\newcommand{\ec}{\end{center}}
\newcommand{\GEN}[1]{\langle #1 \rangle}
\newcommand{\HQ}{{\mathbb H}}
\newcommand{\inv}{^{-1}}
\newcommand{\matriz}[1]{\begin{array} #1 \end{array}}
\newcommand{\BQ}{{\mathbb B}}
\newcommand{\KQ}{{\mathbb K}}
\newcommand{\Isom}{{\rm Isom}}
\newcommand{\XQ}{{\mathbb X}}
\newcommand{\SQ}{{\mathbb S}}
\newcommand{\te}{{\mathcal{T}}}
\newtheorem{theorem}{Theorem}[section]
\newtheorem{definition}[theorem]{Definition}
\newtheorem{lemma}[theorem]{Lemma}
\newtheorem{proposition}[theorem]{Proposition}
\newtheorem{remark}[theorem]{Remark}
\newtheorem{example}[theorem]{Example}
\begin{document}

\begin{frontmatter}

\title{Revisiting Poincar\'{e}'s Theorem on
presentations of discontinuous groups via fundamental polyhedra\tnoteref{mytitlenote}}
\tnotetext[mytitlenote]{The first author is supported in part by Onderzoeksraad of
Vrije Universiteit Brussel and Fonds voor
Wetenschappelijk Onderzoek (Flanders). The second author is supported by Fonds voor
Wetenschappelijk Onderzoek (Flanders)-Belgium.  The third author is partially supported by the Spanish Government under Grant MTM2012-35240 with "Fondos FEDER" and Fundaci\'{o}n S\'{e}neca of Murcia under Grant 04555/GERM/06 }

\author[mymainaddress]{E. Jespers}
\author[mymainaddress]{A. Kiefer}
\author[mysecondaryaddress]{\'A. del R\'io}

\address[mymainaddress]{Department of Mathematics, Vrije
Universiteit Brussel, Pleinlaan 2, 1050
Brussel, Belgium \\
emails: efjesper@vub.ac.be and akiefer@vub.ac.be}
\address[mysecondaryaddress]{Departamento de Matem\'{a}ticas,
 Universidad de Murcia,  30100 Murcia, Spain\newline
email: adelrio@um.es}

\begin{abstract}
We give a new self-contained  proof of Poincar\'e's Polyhedron Theorem on presentations of discontinuous groups of isometries  of a Riemann manifold of  constant curvature.
The proof is not based on the theory of covering spaces, but only makes use of basic geometric concepts.
In a sense one hence obtains a proof that is of a more constructive nature than most known proofs.
\end{abstract}

\begin{keyword}
Discontinuous groups, Kleinian groups, Presentations, Fundamental Domain.
\MSC[2010] 20H10,30F40
\end{keyword}

\end{frontmatter}


\section{Introduction}

Poincar\'e's Polyhedron Theorem is a widely known and often used result in mathematics. In summary, it states that if $P$ is a polyhedron with side pairing transformations satisfying several conditions, then the group $G$ generated by those side pairing transformations is discontinuous, $P$ is a fundamental polyhedron for $G$ and the reflection relations and cycle relations form a complete set of relations for $G$. Poincar\'e first published the theorem for dimension two in \cite{Poincare1882}, then one year later also for dimension three in \cite{Poincare1883}. A lot on this theorem may also be found in the literature, see for instance the books \cite{beardon, bridson, ElsGrunMen, Maskit, ratcliffe}. There are also various articles on this theorem, such as \cite{EpsteinPetronio, Maskit2, dRham, AnaninGrossi}.

In this paper we are  only interested in the presentation part of the theorem. It gives a method to obtain a presentation of a discontinuous subgroup of the group of isometries of a Riemann manifold of constant curvature from a fundamental polyhedron of the group (Theorem~\ref{PoincareT}).

Coming from the field of group rings, we are mainly interested in describing structures of unit groups of integral group rings of finite groups.
This unit group is a fundamental object in the study of the isomorphism problem of integral group rings; a standard reference on this topic is \cite{Sehgal}.
In some important cases, these unit groups may be described using some discontinuous groups of isometries of hyperbolic spaces \cite{PitadelRioRuiz2005,PitadelRio2006,JesPitRioRui2007}.
Hence our interest in the  presentation part of Poincar\'e's Theorem.

As already stated in \cite[Section 9]{EpsteinPetronio}, most of the published proofs on Poincar\'e's theorem are rather unsatisfactory (we refer the reader to \cite{EpsteinPetronio} for a long list of references on this topic). The two original versions {written by  Poincar\'e (\cite{Poincare1882} and \cite{Poincare1883})} are very complicated to read. Moreover, for the proof of the three-dimensional case one simply refers for a large part to the two-dimensional case, and this without really proving that the results used are still valid. More modern versions often only deal with dimension two, which simplifies the proof a lot and cannot be directly generalized to dimension three. Other modern versions are either very complicated too read or stay  hazy for some aspects.
The proofs in \cite{Maskit} and \cite{EpsteinPetronio} focus mostly on that part of  Poincar\'e's Theorem which states that if a polyhedron satisfies some conditions then it is a fundamental domain of a discontinuous group. As a consequence the presentation part of the theorem is obtained somehow indirectly and the intuition on the presentation part is hidden in these proofs. In \cite{macbeath}, an analogous, but different, presentation result is proven for topological transformation groups. The generators obtained by Poincar\'e's Polyhedron Theorem can be derived from this result. It should also be possible to derive the relations given by Poincar\'e, but here a lot more work has to be done. Moreover, geometrically seen, Poincar\'e's Theorem is more intuitive.
That is why, in this paper, we give a  new and  self-contained proof of the  presentation part of Poincar\'e's Theorem, which we hope   to contribute in  a deeper understanding of this important result.

In Sections 2 and 3 we give the necessary definitions and background on  well known facts
on geometry,  hyperplanes and polyhedra. Section 4 discusses tessellations as a preparation to Section 5 where the main theorem  on presentations (Theorem~\ref{PoincareT}) is proven. In the first part of this section we prove that the  side paring transformations are generators and in the second part we prove that the pairing and cycle relations form a complete set of relations.  To guide the reader through this long section, we will include at the beginning of these two parts a brief outline of the main idea of the proofs.

\section{Background} \label{SectionHyperbolicSpaces}

We use the standard topological notation for  a subset $X$ of a topological space:
    \begin{eqnarray*}
    \overline{X}&=&\text{Closure of } X; \\
    X^0&=&\text{Interior of } X; \\
    \partial{X}&=&\text{Boundary of } X.
    \end{eqnarray*}

In the remainder of the paper $\XQ$ denotes an $n$-dimensional Riemann manifold of constant curvature.
In other words, $\XQ$ is either the $n$-dimensional Euclidean space $\R^n$, the $n$-dimensional unit sphere $\SQ^n$ of $\R^{n+1}$ or one of the models of $n$-dimensional hyperbolic space.
The curvature of $\R^n$  is 0,  that of $\SQ^n$ is positive and  the hyperbolic space  has negative  curvature. We assume that the reader is familiar with these three spaces.
Standard references on this topic are  \cite{beardon,BP91,bridson,ElsGrunMen,gromov,mac,Maskit,ratcliffe}.
In this section, we recall some basic facts on the geometry of $\XQ$.
We restrict to the background required for our purposes.

We will use three models for the hyperbolic $n$-space. The first one is the Poincar\'{e} upper half-space
	$$\HQ^n = \{(x_1,\dots,x_n) \in \R^n : x_n>0\},$$
with the metric $d$ given by
	$$\cosh d(a,b) = 1+\frac{\|a-b\|^2}{2a_nb_n},$$
where $\|\;\|$ denotes the Euclidean norm and $a=(a_1,\dots,a_n),\; b=(b_1,\dots,b_n)\in \HQ^n$.
In particular, the hyperbolic ball
$$B_{\HQ^n}(a,r)=\{ x\in\HQ^{n} \; : \; d (x,a)\le r\}$$
with hyperbolic center $a=(a_{1}, \ldots , a_{n})$ and hyperbolic radius $r$ is the Euclidean ball  given by
\begin{eqnarray} \label{HypSpereEucSpereEq}
(x_{1}-a_{1})^{2}+ \cdots + (x_{n-1}-a_{n-1})^{2} + (x_{n} -a_{n}\cosh (r) )^{2} &\leq & (a_{n}\sinh (r))^{2}.
\end{eqnarray}
In other words
	$$B_{\HQ^n}(a,r)=B_{\R^n}((a_1,\dots,a_{n-1},a_n\cosh(r)),a_n\sinh(r)).$$
Hence, the topology of $\HQ^n$ is that induced by the Euclidean topology of $\R^n$ and a subset of $\HQ^{n}$ is compact if and only if it is closed and bounded.

The second model for the hyperbolic $n$-space is the open \emph{unit ball}
	$$\BQ^n=\{a\in \R^n  :  \| a \|<1\}$$
with the metric $d$ given by
	$$\cosh d(a,b)=1 + 2\frac{\| a-b\|^2}{(1-\| a \|^2)(1-\| b \|^2)}.$$

The third model is the Klein model $\KQ^n$ whose underlying set also  is  the open unit ball but the metric $d$ is given by
$$d(a,b)=\frac{1}{2}\ln \frac{\|a-b'\|\|b-a'\|}{\|a-a'\|\|b-b'\|},$$
where the definition of the points $a'$ and $b'$ is depicted in Figure~\ref{Klein model}.
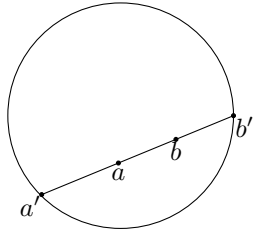
\begin{figure}[h!]
\begin{center}
\begin{tikzpicture}[scale=1.5]
\draw (0,0) circle (1cm);
\draw[-] (1,0) -- (-0.7,-0.7);
\foreach \y in {0,0.3,0.6,1}
{
\filldraw (1-1.7*\y,-0.7*\y) circle (0.5pt);
}
\draw (1.1,-0.1) node {$b'$};
\draw (1-1.7*0.3,-0.1-0.7*0.3) node {$b$};
\draw (1-1.7*0.6,-0.1-0.7*0.6) node {$a$};
\draw (1-1.7-0.1,-0.1-0.7) node {$a'$};
\end{tikzpicture}
\caption{\label{Klein model}
Distance in Klein model. 
}
\end{center}
\end{figure}

The boundary in $\R^n$ of $\HQ^n$ is $\partial \HQ^n=\{(a_1,\dots,a_{n-1},0):a_1,\dots,a_{n-1}\in \R\}$ and the boundary of $\BQ^n$ and $\KQ^n$ is $\partial \BQ^n=\{a\in \R^n : \|a\|=1\}$.
The geodesics in $\HQ^n$ and $\BQ^n$ are the intersection with Euclidean lines and circles orthogonal to the boundary and the geodesics in $\KQ^n$ are the intersection of Euclidean lines with the unit ball (Figure~\ref{Geodesics}).
\begin{figure}[h!]
\begin{center}
\begin{tikzpicture}[scale=0.7]
\clip (-5,0) rectangle (5,4);
\draw[-] (1,0) -- (1,4);
\draw[-] (-5,0) -- (5,0);
\draw (0,0) circle (2);
\draw (2,0) circle (2);
\draw (-1.5,0) circle (1.5);
\draw (0.5,0) circle (0.5cm);
\end{tikzpicture}
\hspace{1cm}
\begin{tikzpicture}[scale=1.2]
\clip[draw] (0,0) circle (1);
\draw[-] (2,-1) -- (-2,1);
\draw[-] (0,-1) -- (0,1);
\draw (0,2.23) circle (2);
\draw (2.23,0) circle (2);
\draw (-1,-1) circle (1);
\end{tikzpicture}
\hspace{1cm}
\begin{tikzpicture}[scale=1.2]
\clip[draw] (0,0) circle (1);
\draw[-] (2,-1) -- (-2,1);
\draw[-] (0,-1) -- (0,1);
\draw[-] (3,-1) -- (-2,1);
\draw[-] (1,0) -- (0,-1);
\end{tikzpicture}
\caption{\label{Geodesics}
Geodesics in $\HQ^2$, $\BQ^2$ and $\KQ^2$.}
\end{center}
\end{figure}

If $x$ is a point of $\SQ^n$ then by $x'$ we denote its  \emph{antipode}.
If $x$ and $y$ are two different points of $\XQ$ then there is a unique geodesic containing $x$ and $y$ unless $\XQ$ is spherical and $x$ and $y$ are antipodal, in which case all the geodesics containing $x$ contain $y$ too.

In the language of Riemannian geometry, a \emph{subspace} of $\XQ$ is a complete totally geodesic submanifold.
If $\XQ=\R^n$ then the subspaces are the affine varieties.
If $\XQ=\SQ^n$ then the subspaces are the intersection with $\XQ$ of the affine subspaces of $\R^{n+1}$ containing $0$.
In $\HQ^n$ and $\BQ^n$, the subspaces are intersections with $\XQ$ of affine subspaces and half-spheres orthogonal to the boundary of $\XQ$.
Finally, the subspaces of $\KQ^n$ are the intersections of $\KQ^n$ with subspaces of $\R^n$.
Every subspace $S$ of $\XQ$ is a Riemann manifold with the same constant curvature than $\XQ$.
The codimension of $S$ in $\XQ$ is the difference between the dimensions of $\XQ$ and $S$.
For every non-empty subset $U$ of $\XQ$ there is a unique minimal subspace of $U$ containing $U$, it is called the \emph{subspace generated} by $U$.
If $S$ is a subspace of $\XQ$ of dimension $k$ and $x\in \XQ\setminus S$ then the subspace generated by $S\cup \{x\}$ is of dimension $k+1$.

Let $x$ and $y$ be two different points of $\XQ$ and assume that $x$ and $y$ are not antipodal in case $\XQ$ is spherical.
Then, $[x,y]$ denotes the geodesic of $\XQ$ joining $x$ and $y$ and $(x,y)=[x,y]\setminus \{x,y\}$.
A subset $S$ of $\XQ$ is said to be convex if $[x,y]\subseteq S$ for all distinct non-antipodal $x,y\in S$.
Let $x\in \XQ$ and let $U$ be a subset of $\XQ$, such that if $\XQ$ is spherical then $U$ does not contain the antipode of $x$.
Then the \emph{cone} based on $U$ with vertex $x$ is $\bigcup_{u\in U} [x,u]$.

\section{Hyperplanes, half-spaces and polyhedra}

In this section there is quite a bit of overlap with the beginning of \cite{EpsteinPetronio}.
For the sake of clearness and completeness, we reprove all the lemmas  needed for this paper.
As our definition of cell is  different from the one given in \cite[Definition 2.8]{EpsteinPetronio}, from Section 4 onwards both papers are independent.
Moreover, the proof of lemma~\ref{IntersectionTwoL} is more complete than the one given in \cite{EpsteinPetronio}.

A hyperplane is a codimension $1$ subspace.
If $H$ is a hyperplane of $\XQ$ then $\XQ\setminus H$ has two connected components called the \emph{open half-spaces} defined by $H$.
If $U$ is one of these then the other open half-space defined by $H$ is denoted $U'$ and
we have
\begin{eqnarray*}
H&=&\partial U = \partial \overline{U} = \partial U' = \partial \overline{U'}, \\
\overline{U}&=&U\cup H, \\
\overline{U}^\circ&=&U, \\
U'&=&\XQ \setminus \overline{U} \text{ and} \\
\overline{U}&=&\XQ \setminus U'.
\end{eqnarray*}
The sets $\overline{U}$ and $\overline{U'}$ are called the closed half-spaces defined by $H$.
Moreover, if $Z$ is one of the two closed half-spaces defined by $H$ then the other is denoted $Z'$.

It is easy to see that the set formed by the non-empty intersections of finitely many open half-spaces is a basis for the topology of $\XQ$.

\begin{lemma}\label{ConeOpenL}
Let $x\in \XQ$ and $U\subseteq \XQ$ and assume that one of the following conditions holds.
\begin{enumerate}
\item $U$ is an open subset of a hyperplane $H$ of $\XQ$ and $x\not\in H$.
\item $U$ is an open subset of $\XQ$ and if $\XQ$ is spherical then $U$ does not contain the antipode of $x$.
\end{enumerate}
Then $\bigcup_{u\in U} (x,u)$ is an open subset  of  $\XQ$ and it is dense in the cone based on $U$ with vertex $x$.
\end{lemma}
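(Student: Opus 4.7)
The plan is to prove openness by producing, for each point $v_0$ of $V:=\bigcup_{u\in U}(x,u)$, a continuous ``geodesic extension from $x$'' that assigns to nearby points $w$ a point $u(w)$ in the appropriate target set with $w\in(x,u(w))$; density of $V$ in the cone is then a straightforward consequence. First I would reduce to a setting in which the geodesics emanating from $x$ are straight line segments. This is automatic when $\XQ=\R^n$; the hyperbolic models $\HQ^n$ and $\BQ^n$ are homeomorphic to the Klein model $\KQ^n$ via maps that send geodesics to geodesics, and in $\KQ^n$ the geodesics are Euclidean segments; for $\XQ=\SQ^n$ one may work in a coordinate chart (say, gnomonic projection centered at $x$) in which the geodesics through $x$ also become straight lines. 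After this reduction all the continuity arguments below become manipulations in a Euclidean chart.

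For case~(1), fix $v_0\in(x,u_0)$ with $u_0\in U\subseteq H$. In the straight-line chart the ray from $x$ through $v_0$ meets $H$ transversally at $u_0$, as it is not contained in $H$ because $x\notin H$. Hence the central projection $\pi_H$, sending $w$ to the intersection of the ray from $x$ through $w$ with $H$, is well defined and continuous on a neighborhood $W$ of $v_0$, with $\pi_H(v_0)=u_0$. Since $U$ is open in $H$, the set $W\cap\pi_H^{-1}(U)$ is an open neighborhood of $v_0$; moreover the parameter $t(w)$ with $w$ at fraction $t(w)$ of $[x,\pi_H(w)]$ is a continuous function of $w$, so it stays in $(0,1)$ on a smaller neighborhood of $v_0$, which forces $w\in(x,\pi_H(w))\subseteq V$.

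For case~(2), pick $v_0\in(x,u_0)$ with $u_0\in U$. Choose $s>1$ slightly greater than $1$ so that the point $u_0^\ast$ at parameter $s$ along the geodesic from $x$ through $v_0$ still lies in $U$; such $s$ exists because $U$ is open and $u_0^\ast$ tends to $u_0$ as $s\to 1^+$. Define $u(w)$ to be the point at parameter $s$ on the geodesic from $x$ through $w$; in the straight-line chart this is manifestly continuous, with $u(v_0)=u_0^\ast\in U$, and by construction $w$ sits at parameter $1/s\in(0,1)$ of $[x,u(w)]$, so $w\in(x,u(w))$. For $w$ sufficiently close to $v_0$ we have $u(w)\in U$, yielding an open neighborhood of $v_0$ inside $V$.

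Density is the easy part: if $U=\emptyset$ both sides are empty, and otherwise, fixing any $u\in U$, points of $(x,u)\subseteq V$ accumulate to both $x$ and $u$, so every point of the cone $\{x\}\cup U\cup V$ is a limit of points of $V$. I expect the main subtlety to lie in the reduction to the straight-line chart, especially checking that the chosen map preserves the ambient topology while straightening geodesics through $x$; after that, the continuity of $\pi_H$ and of $u(\cdot)$ is just the standard continuity of intersection and extension of affine rays.
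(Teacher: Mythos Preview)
Your argument is correct and genuinely different from the paper's. The paper first treats case~(1) by reducing to $\R^n$ (Klein model for hyperbolic, central projection to a tangent hyperplane for spherical), then applies affine transformations to put $x$ at the origin and $H$ at $\{x_n=1\}$, reduces $U$ to a cube, and writes the open cone down explicitly as $\{0<x_n<1,\ |x_i|<x_n\}$. Case~(2) is then deduced from~(1) by slicing $U$ with a hyperplane $H_u$ through each $u\in U\setminus\{x\}$ and writing $\bigcup_{u}(x,u)=\bigcup_u\bigcup_{v\in U\cap H_u}(x,v)$. Your approach instead stays conceptual: a local continuous ``retraction to the target'' (the central projection $\pi_H$ in~(1), the geodesic extension $u(\cdot)$ in~(2)) exhibits a neighborhood of each $v_0$ inside $V$. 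This avoids the explicit coordinate computation and handles~(2) directly rather than via~(1); the paper's route, on the other hand, pins everything down so concretely that no transversality or continuity-of-intersection statements need to be invoked.

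One small correction on the spherical reduction you already flag as delicate: the gnomonic chart \emph{centered at $x$} only covers the open hemisphere about $x$, so $u_0\in H$ (and even $v_0$) may lie outside it. Since gnomonic projection sends \emph{all} great subspheres to affine subspaces, you can instead center the chart at the midpoint of $[x,u_0]$; then $x$, $v_0$, $u_0$, and a neighborhood of $u_0$ in $H$ all lie in the chart, $H$ appears as a genuine hyperplane there, and your $\pi_H$ argument goes through verbatim. Your case~(2) argument, incidentally, needs no chart at all: defining $u(w)=\exp_x\bigl(s\,d(x,u_0)\,\xi(w)\bigr)$ with $\xi(w)$ the unit tangent at $x$ toward $w$ is already intrinsic and continuous.
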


\begin{proof}
(1) Assume that $U$ is an open subset of a hyperplane $H$ and $x\not\in H$.
First observe that, in the spherical case, $H$ is closed under taking antipodes and therefore it does not contain the antipode of $x$.
Let $C=\cup_{u\in U} (x,u)$. We have to prove that $C$ is open.
We first reduce the statement to the Euclidean case.
This is clear for the hyperbolic geometry by using the Klein model.
To reduce the spherical case to the Euclidean case, consider $\SQ^n$ as a subset of $\R^{n+1}$ and let $V$ be the half space of $\SQ^n$ with border $H$ and containing $x$ and let  $E$ be the hyperplane of $\R^{n+1}$ tangent to the sphere in the point of $V$ whose tangent in $\SQ^n$ is parallel to $H$.
The stereographic projection from the center of the sphere is a bijection $V\rightarrow E$ mapping the intersections of $V$ with the geodesics of the sphere to the Euclidean geodesics of $E$. Hence the statement for the Euclidean geometry implies the statement for the hyperbolic and spherical geometries.

So we only have to prove the statement for $\XQ=\R^n$.
Making use of some affine transformations we may, without loss of generality, assume that $x$ is the origin and $H$ is given by the equation $x_n=1$. As $U$ is a union of squares of the form $(a_1,b_1)\times \dots \times (a_{n-1},b_{n-1})\times \{1\}$ it is enough to prove the statement under the assumption that $U$ is one of these squares. Hence, again making use  of some linear transformations if needed, we may assume that $U=(-1,1)^{n-1}\times \{1\}$. Then $C=\{(x_1,\dots,x_n) : 0<x_n<1 \text{ and } |x_i|<x_n \text{ for each } 1<i<n\}$, which is clearly an open subset.

(2) Assume now that $U$ is open and, in case $\XQ$ is spherical it does not contain the antipode of $x$.
Every  $u\in \XQ\setminus \{x\}$ belongs to some hyperplane $H_u$ such that $x\not\in H_u$. Thus, by part (1), $\cup_{u\in U} (x,u) = \cup_{u\in U} \cup_{v\in U\cap H_u} (x,v)$ is open and its closure contains $U\cup \{x\}$.
So,  it is dense in the cone based on $U$ with vertex $x$.
\end{proof}

\begin{lemma}\cite[Lemma~2.3]{EpsteinPetronio}\label{InteriorLemmaL}
If $D$ is an intersection of closed half-spaces in $\XQ$ then either $D^\circ\ne\emptyset$  or $D$ is contained in a hyperplane of $\XQ$.
Moreover, if $D^\circ\ne\emptyset$ then $D^\circ$ is dense in $D$.
\end{lemma}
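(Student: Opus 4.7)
The plan is to start by observing that $D$ is convex, since each closed half-space is convex and an intersection of convex sets is convex. The statement then reduces to a statement about convex sets: any convex $D \subseteq \XQ$ that is not contained in a hyperplane has non-empty interior, and moreover its interior is dense in $D$.

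For the first assertion I prove the contrapositive: if $D$ is not contained in any hyperplane, then $D^\circ \ne \emptyset$. The idea is to construct points $p_0, p_1, \dots, p_n \in D$ (where $n = \dim \XQ$) that generate $\XQ$ as a subspace, and then to build the ``simplex'' spanned by these points incrementally as iterated cones and show it has non-empty interior. Precisely, pick $p_0 \in D$ and, having chosen $p_0, \dots, p_k$ spanning a $k$-dimensional subspace $S_k$ with $k < n$, note that $S_k$ lies in some hyperplane, so by hypothesis $D \not\subseteq S_k$ and we may choose $p_{k+1} \in D \setminus S_k$. In the spherical case subspaces are closed under the antipodal map, so $-p_{k+1} \notin S_k$ and no antipodal pair arises between $p_{k+1}$ and any point of $S_k$. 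Define $C_0 = \{p_0\}$ and recursively $C_{k+1} = \bigcup_{u \in C_k} [p_{k+1}, u]$, the cone with vertex $p_{k+1}$ over $C_k \subseteq S_k$. By convexity of $D$ we get $C_k \subseteq D$ for all $k$. I prove by induction on $k$ that the interior of $C_k$ relative to $S_k$ is non-empty: for $k \ge 1$, applying Lemma~\ref{ConeOpenL}(1) inside the ambient space $S_k$ with hyperplane $S_{k-1}$ (containing $C_{k-1}$) and external point $p_k$, yields $\bigcup_{u \in C_{k-1}^\circ}(p_k, u)$ as a non-empty open subset of $S_k$ contained in $C_k$. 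Taking $k = n$ gives $S_n = \XQ$, so $C_n^\circ \subseteq D^\circ$ is non-empty.

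For the density statement I assume $D^\circ \ne \emptyset$, pick $x \in D$, and pick $y \in D^\circ$, chosen not antipodal to $x$ in the spherical case (possible because $D^\circ$ is open). Convexity gives $[x, y] \subseteq D$. Take an open ball $B \subseteq D^\circ$ centred at $y$, small enough to miss the antipode of $x$ in the spherical case. By Lemma~\ref{ConeOpenL}(2), $\bigcup_{u \in B}(x, u)$ is open in $\XQ$, and it lies in $D$ by convexity, hence in $D^\circ$. Since $(x, y) \subseteq \bigcup_{u \in B}(x, u)$ has $x$ as a limit point, $x$ belongs to the closure of $D^\circ$. The main subtlety throughout is managing antipodes in the spherical case, both to keep the cone construction well-defined and to run the density argument; once one notes that subspaces of $\SQ^n$ are closed under the antipodal involution, these issues dissolve and the proof proceeds uniformly across the three geometries.
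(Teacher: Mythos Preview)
Your proof is correct. Both your argument and the paper's rely on the same two ingredients---convexity of $D$ and Lemma~\ref{ConeOpenL}---but they are organised differently. The paper packages both assertions into a single maximality argument: it considers the family $\mathcal{S}$ of subspaces $S$ for which $D\cap S$ has non-empty (relative) interior that is dense in $D\cap S$, takes a maximal $S\in\mathcal{S}$, and shows via Lemma~\ref{ConeOpenL} that any $x\in D\setminus S$ would let one enlarge $S$, forcing $D\subseteq S$. Your approach instead separates the two claims: for the first you build an explicit $n$-simplex $C_n\subseteq D$ by iterated cones over a spanning set $p_0,\dots,p_n$, and for the second you run the cone-over-a-ball argument directly. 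Your version is a bit more constructive and makes transparent that only convexity of $D$ (not the specific half-space description) is used; the paper's maximality argument is slicker in that density and non-empty interior are obtained in one stroke. Your handling of the antipodal issue in the spherical case is careful and correct.
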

\begin{proof}
We may assume that $D$ is not empty.
Let $\mathcal{S}$ be the set whose elements are  the subspaces $S$ of $\XQ$ with the property that  $D\cap S$ has a non-empty interior, say  $V$, as a subset of $S$ and such that $V$ is dense in $D\cap S$.
Clearly, if $x\in D$ then $\{ x \}\in \mathcal{S}$.
So $\mathcal{S}\ne \emptyset$.
Let $S$ be a maximal element of $\mathcal{S}$.
It is enough to show that $D\subseteq S$.
Indeed,  if this holds then either $S$ is contained in a hyperplane, and hence so is $D$, or $\XQ=S\in \mathcal{S}$ and thus $D^\circ$ is dense in $D$.

Assume that $D\not\subseteq S$ and let $x\in D\setminus S$.
Let $V$ be the interior of $D\cap S$ considered as a subset of $S$.
By definition, $V$ is a non-empty open subset of $S$.
Let $T$ be the subspace generated by $S\cup \{x\}$. Then $S$ is a hyperplane of $T$.
By Lemma~\ref{ConeOpenL}, $C_x=\cup_{v\in V} (x,v)$ is an open subset of $T$ contained in $D$ and $x\in \partial C_x$.
As this property holds for every $x\in (D\cap T) \setminus S$, we obtain that
$\cup_{x\in (D\cap T) \setminus S} C_x$ is open in $T$ and dense in $D\cap T$.
Therefore $T\in \mathcal{S}$, contradicting the maximality of $S$.
Thus $D\subseteq S$, as desired.
\end{proof}

A set $\h$ of subsets of $\XQ$ is  said to be \emph{locally finite} if each point in $\XQ$ has a neighborhood that intersects only finitely many of the sets in $\Gamma$.

A  non-empty subset $P$ of $\XQ$ is said to be a \emph{polyhedron of $\XQ$} if $P=\cap_{Z\in \h} Z$ for a family $\h$ of closed half-spaces such that $\{\partial Z : Z\in \h\}$ is locally finite.
In this case, one says that $\h$ defines the polyhedron $P$.
For example, a subspace $S$ is a polyhedron because $S=\cap_{i=1}^k H_i$ for some hyperplanes $H_1,\dots,H_k$  and then $S=\cap_{i=1}^k Z_i\cap Z'_i$ where $Z_i$ and $Z'_i$ are the two closed subspaces containing $H_i$.
Let $P$ be a polyhedron and let $S$ be the subspace generated by $P$.
By Lemma~\ref{InteriorLemmaL}, $P$ contains a non-empty  open subset of $S$ (and it is dense in $P$).
We then say that $P$ is \emph{thick in $S$} (note that  $S$ is the unique subspace of $\XQ$ in  which $S$ is thick).
If $P$ is thick in $\XQ$ we simply say that $P$ is \emph{thick}.
The \emph{dimension} (respectively, \emph{codimension}) of $P$ is by definition the dimension  of $S$ (respectively, the codimension of $S$ in $\XQ$).
The relative interior of $P$, denoted $P^r$, is the interior of $P$ in the subspace generated by $P$.

\begin{lemma} \cite[Proposition~2.3]{EpsteinPetronio}\label{EssentialBoundaryL}
Let $P$ be a thick polyhedron and let $\h$ be a family of closed half-spaces defining $P$.
Then $\partial P = \bigcup_{Z\in \h} P\cap \partial Z$ and the following conditions are equivalent for a closed half-space $Z_0$ of $\XQ$:
\begin{enumerate}
 \item $P\ne \bigcap_{Z\in \h\setminus \{Z_0\}} Z$.
 \item $P\subseteq Z_0$ and $P\cap \partial Z_0$ is thick in $\partial Z_0$.
\end{enumerate}
\end{lemma}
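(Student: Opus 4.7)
The plan is to verify the boundary description first, then the two implications of the equivalence.

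For $\partial P = \bigcup_{Z\in \h} P\cap \partial Z$: the inclusion $\supseteq$ is immediate, since $P\subseteq Z$ forces any neighborhood of a point of $\partial Z$ to meet $(Z')^\circ \subseteq \XQ\setminus P$. For $\subseteq$, I would take $x\in P$ lying in $Z^\circ$ for every $Z\in \h$; local finiteness of $\{\partial Z : Z\in \h\}$ yields a connected open neighborhood $W$ of $x$ meeting only finitely many $\partial Z$, and shrinking $W$ to avoid those finitely many closed sets (none of which contains $x$) makes $W$ disjoint from every $\partial Z$. Connectedness then forces $W\subseteq Z^\circ$ for every $Z\in \h$, so $W\subseteq P^\circ$ and $x\notin \partial P$.

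For (1)$\Rightarrow$(2), set $Q=\bigcap_{Z\in \h\setminus\{Z_0\}}Z$. Condition (1) forces $Z_0\in \h$, since otherwise $Q=P$, and hence $P\subseteq Z_0$. Pick $x\in Q\setminus P$, which lies in $(Z_0')^\circ$, and an open $\XQ$-ball $U\subseteq P^\circ\subseteq Z_0^\circ$ (available because $P$ is thick). The open cone $C=\bigcup_{u\in U}(x,u)$ is open in $\XQ$ by Lemma~\ref{ConeOpenL}, and is contained in $Q$ by convexity of the half-spaces in $\h\setminus\{Z_0\}$. Each segment $(x,u)$ crosses $\partial Z_0$ in a single interior point, so $C\cap \partial Z_0$ is a non-empty subset of $\partial Z_0$, open in $\partial Z_0$, and contained in $Q\cap Z_0=P$. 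Therefore $P\cap \partial Z_0$ is thick in $\partial Z_0$.

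For (2)$\Rightarrow$(1), take a non-empty $\partial Z_0$-open $V\subseteq P\cap \partial Z_0$, fix $v\in V$, and by local finiteness select an $\XQ$-neighborhood $W$ of $v$ meeting $\partial Z$, for $Z\in \h$, only for the finitely many $Z_1,\dots,Z_s$ with $v\in \partial Z_i$. The key step is to show that $\partial Z_i=\partial Z_0$ for every $i\le s$: if $\partial Z_i\neq \partial Z_0$, then $Z_i\cap \partial Z_0$ is a closed half-space of $\partial Z_0$ whose $\partial Z_0$-boundary is $\partial Z_i\cap \partial Z_0$, yet $V\subseteq Z_i\cap \partial Z_0$ is open in $\partial Z_0$ and contains $v\in \partial Z_i$, which is impossible. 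The case $Z_i=Z_0'$ is then excluded because it would combine with $P\subseteq Z_0$ to give $P\subseteq \partial Z_0$, contradicting thickness of $P$. Hence $Z_i=Z_0$ and $s=1$, so in particular $Z_0\in \h$. Consequently $W\cap \partial Z=\emptyset$ for every $Z\in \h\setminus\{Z_0\}$, and connectedness of $W$ together with $v\in Z$ yields $W\subseteq Z$, hence $W\subseteq Q$. Since $v\in \partial Z_0$, the open set $W\cap(Z_0')^\circ$ is non-empty, and any of its points lies in $Q\setminus P$.

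The main obstacle is this last local argument: showing that the combination of $P$ being thick in $\XQ$ and $P\cap \partial Z_0$ being thick in $\partial Z_0$ forces $Z_0$ itself to appear in the defining family $\h$. Everything else is built around applying Lemma~\ref{ConeOpenL} to manufacture open subsets in the right hyperplane, and around the observation that two distinct hyperplanes cut each other in a codimension-one set.
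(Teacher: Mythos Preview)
Your argument is correct and follows essentially the same route as the paper: the boundary description and the implication (1)$\Rightarrow$(2) are virtually identical (cone over an open set via Lemma~\ref{ConeOpenL}), and your (2)$\Rightarrow$(1) is an expanded version of what the paper compresses into the sentence ``It easily is verified that then $x\notin\partial Z$ for every $Z\in\Gamma\setminus\{Z_0\}$''. Two cosmetic points to tighten: in (1)$\Rightarrow$(2), when $\XQ$ is spherical you should shrink $U$ to avoid the antipode of $x$ before invoking Lemma~\ref{ConeOpenL}; and in (2)$\Rightarrow$(1), the assertion $s=1$ (rather than $s\le 1$) uses that $v\in\partial P$, which follows from $P\subseteq Z_0$ and the boundary description just proved.
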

\begin{proof}
The inclusion $\partial P \supseteq \bigcup_{Z\in \h} P\cap \partial Z$ is clear. For the converse inclusion assume that $x\in P\setminus \partial Z$ for every $Z\in \h$.
As $\{\partial Z : Z\in \h\}$ is locally finite, any ball of $\XQ$ with center $x$ intersects only finitely many $\partial Z$ with $Z\in\h$ and hence $x$ has a neighborhood not intersecting any $\partial Z$. This implies that $x\in P^\circ$ and thus $x\not\in \partial P$. Since $P$ is closed it follows that  $\partial P = \bigcup_{Z\in \h} P\cap \partial Z$.

Let $P_0=\cap_{Z\in \h\setminus \{Z_0\}} Z$.

(1) implies (2). Assume that $P\ne P_0$. Clearly $Z_0\in \h$ and therefore $P\subseteq Z_0$.
Let $x\in P_0\setminus P$. As $P$ is thick there is a non-empty open subset $U$ of $\XQ$ contained in $P$ such that if $\XQ$ is spherical then the antipode of $x$ is not in $U$.
Then $x\in \XQ\setminus Z_0=({Z_0'})^\circ$ and therefore every open segment $(x,u)$ with $u\in U$ intersects $\partial Z_0$.
By Lemma~\ref{ConeOpenL},  $C=\cup_{u\in U} (x,u)$ is an open subset of $\XQ$ contained in $P_0$ and hence $C\cap \partial Z_0$ is a non-empty open subset of $\partial Z_0$ contained in $P$.
Therefore $P\cap \partial Z_0$ is thick in $\partial Z_0$.

(2) implies (1). Assume that  $Z_0$   satisfies (2). Let $x$ be an element of the interior of $P\cap \partial Z_0$ in $\partial Z_0$.
It easily is verified that then $x\not\in \partial Z$ for every $Z\in \h\setminus \{Z_0\}$.
Hence, $x\in (P_0)^{\circ}$.
Clearly $x\not\in (Z_{0})^{\circ}$ as $x\in \partial Z_{0}$. Because $P^{0}\subseteq (Z_0)^{0}$  it follows that $x\not\in P^{\circ}$.
Thus $P\ne P_0$.
\end{proof}

Observe that condition (2) of Lemma~\ref{EssentialBoundaryL} does not depend on $\h$, but only depends on $P$.
A closed half-space  $Z_0$ of $\XQ$ satisfying the equivalent conditions of Lemma~\ref{EssentialFacesL}  is called an essential half-space
of $P$ and $\partial Z_0$ is called an essential hyperplane of $P$.

\begin{lemma}\cite[Lemma~2.4 and Proposition~2.5]{EpsteinPetronio}  \label{EssentialFacesL}
Every thick polyhedron  of $\XQ$  is the intersection of its essential half-spaces  and, in particular,
$\partial P$ is the union of the intersection of $P$
with the essential hyperplanes of $P$.
\end{lemma}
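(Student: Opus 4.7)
The plan is to derive both assertions from Lemma~\ref{EssentialBoundaryL} by means of a localization argument at boundary points of $P$. Fix a family $\h$ of closed half-spaces defining $P$ and let $\h_e$ be the collection of essential half-spaces of $P$. The inclusions $P\subseteq \bigcap_{Z\in \h_e}Z$ and $\bigcup_{Z\in \h_e}(P\cap \partial Z)\subseteq \partial P$ are immediate: the former by condition~(2) of Lemma~\ref{EssentialBoundaryL}, which gives $P\subseteq Z$ for each essential $Z$; the latter because $P\subseteq Z$ implies $P^{\circ}\subseteq Z^{\circ}$ (as $P^{\circ}$ is open), so any point of $P$ lying on $\partial Z$ cannot be interior to $P$.

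The heart of the argument is the following local statement at a point $z\in \partial P$. By local finiteness of $\h$, one can choose an open neighborhood $U$ of $z$ together with the finite set $\{Z_1,\dots,Z_k\}\subseteq \h$ of half-spaces satisfying $z\in \partial Z_i$, arranged so that every other $Z\in \h$ satisfies $U\subseteq Z^{\circ}$ (since $z\in P\subseteq Z$ and $z\notin \partial Z$ force $z\in Z^{\circ}$). Then $P\cap U=U\cap I$ with $I=Z_1\cap\cdots\cap Z_k$. Because $P^{\circ}$ is dense in $P$ by Lemma~\ref{InteriorLemmaL}, $P^{\circ}\cap U$ is a non-empty open subset of $I$, so $I$ is thick. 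Extract a minimal subfamily $\{Z_{i_1},\dots,Z_{i_m}\}\subseteq \{Z_1,\dots,Z_k\}$ with $\bigcap_j Z_{i_j}=I$; Lemma~\ref{EssentialBoundaryL} applied to $I$ then yields that $I\cap \partial Z_{i_j}$ is thick in $\partial Z_{i_j}$ for each $j$. I would then promote this to global essentialness for $P$ by applying Lemma~\ref{InteriorLemmaL} inside the hyperplane $\partial Z_{i_j}$: the relative interior of $I\cap \partial Z_{i_j}$ is dense in $I\cap \partial Z_{i_j}$, so the open neighborhood $U\cap \partial Z_{i_j}$ of $z$ in $\partial Z_{i_j}$ contains a relatively interior point; this yields an open subset $V$ of $\partial Z_{i_j}$ with $V\cap U\subseteq I\cap U=P\cap U$, proving $P\cap \partial Z_{i_j}$ is thick in $\partial Z_{i_j}$ and hence $Z_{i_j}\in \h_e$.

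Both conclusions now follow. For the description of $\partial P$, taking $x\in \partial P$ and running the local analysis at $z=x$, one has $m\geq 1$ (else $P\cap U=U$ would force $x\in P^{\circ}$), so some essential $Z_{i_j}$ satisfies $x\in \partial Z_{i_j}$. For the description of $P$, given $x\notin P$, pick $y\in P^{\circ}$ not antipodal to $x$ (possible since $P^{\circ}$ is a non-empty open set) so that the geodesic segment $[y,x]$ is defined. Convexity of $P$ as an intersection of convex half-spaces gives $[y,x]\cap P=[y,z]$ for some $z\in \partial P$; the local analysis at $z$ shows the segment must exit $P\cap U=U\cap Z_{i_1}\cap\cdots\cap Z_{i_m}$ through the boundary of some $Z_{i_j}$, and convexity of $Z_{i_j}$ together with $z\in Z_{i_j}$ rules out $x\in Z_{i_j}$. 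The main obstacle is the local-to-global transfer of essentialness, which hinges on the density of the relative interior (Lemma~\ref{InteriorLemmaL}) and on choosing $U$ small enough that all half-spaces of $\h$ not tight at $z$ have $U$ entirely in their interior.
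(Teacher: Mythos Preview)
Your argument is correct, but it follows a different and somewhat longer route than the paper's. The paper argues by contradiction at a point $x\in P_1\setminus P$ (where $P_1=\bigcap_{Z\in\h_1}Z$): using local finiteness at $x$, only finitely many boundaries $\partial Z_1,\dots,\partial Z_k$ of $\h$ pass through $x$; since the non-essential ones among these can be dropped from $\h$ without changing $P$ (condition~(1) of Lemma~\ref{EssentialBoundaryL}), one gets $P\cap U=P_1\cap U$ for a small ball $U$ around $x$, contradicting $x\in P_1\setminus P$. The boundary description is then read off directly from the first part of Lemma~\ref{EssentialBoundaryL}. In short, the paper exploits the \emph{removability} characterization of essentialness.

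You instead work at points $z\in\partial P$, build the local finite intersection $I=Z_1\cap\cdots\cap Z_k$, pass to a minimal subfamily to obtain half-spaces essential for $I$, and then \emph{promote} these to essential half-spaces of $P$ via the density statement of Lemma~\ref{InteriorLemmaL} inside $\partial Z_{i_j}$. This is sound and has the virtue of producing, constructively, an essential hyperplane through every boundary point. The cost is the extra local-to-global transfer and the separate geodesic-segment argument to recover the equality $P=\bigcap_{Z\in\h_e}Z$, both of which the paper avoids by its direct contradiction. Either approach is fine; the paper's is shorter because it leans on condition~(1) rather than rebuilding essentialness from condition~(2).
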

\begin{proof}
Let $P$ be a thick polyhedron of $\XQ$ and let $\h$ be a set of closed half-spaces defining $P$.
Let $\h_1$ be the set of essential closed half-spaces of $P$ and let $P_1=\cap_{Z\in \h_1} Z$.
As $\h_1\subseteq \h$ we have that  $P\subseteq P_1$.
Assume that this inclusion is strict and take $x\in P_1\setminus P$.
Let $Z_1,\dots,Z_k$ be the elements of $\h$ whose boundaries  contain $x$.
Then, there is a ball $U$ of $\XQ$ centered in $x$ such that $U\cap \partial Z=\emptyset$ for every $Z\in \h\setminus \{Z_1,\dots,Z_k\}$.
Let $l$ be a non-negative integer such that $l\leq k$ and
$Z_i$ is an essential half-space of $P$ if and only if $i\le l$.
Then $P=\cap_{Z\in \Gamma\setminus \{Z_{l+1},\dots,Z_k\}} Z$ and hence $x\in  \cap_{i=1}^l Z_i\cap U=P_1\cap U =\cap_{Z\in \Gamma\setminus \{Z_{l+1},\dots,Z_k\}} Z \cap U = P\cap U$, a contradiction.

The last part of the statement of the lemma follows from Lemma~\ref{EssentialBoundaryL}.
\end{proof}

\begin{lemma}\cite[Lemma~2.7]{EpsteinPetronio} \label{IntersectionTwoL}
Let $Z_1, \, Z_{2}$ and $Z_3$ be closed half-spaces of $\XQ$ such that $\partial Z_1 \cap \partial Z_2 \cap \partial Z_3$ has  codimension $2$ and
$Z_1 \cap Z_2 \cap Z_3$ is thick. Then $Z_1 \cap Z_2 \cap Z_3 = Z_i \cap Z_j$ for some $i,j\in \{ 1,2, 3\}$.
\end{lemma}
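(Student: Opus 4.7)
Plan. Set $H_i = \partial Z_i$ and $L = H_1 \cap H_2 \cap H_3$. First I handle coincidences: if $H_i = H_j$ for some $i\ne j$, then either $Z_i = Z_j$ (so $Z_1\cap Z_2\cap Z_3$ already equals the intersection of the two remaining half-spaces and there is nothing to prove) or $Z_i = Z_j'$, forcing $Z_i\cap Z_j = H_i$ to have codimension $1$ and contradicting the thickness hypothesis. The case $H_1=H_2=H_3$ is ruled out by the codimension assumption on $L$. So I may assume the $H_i$ are pairwise distinct; since each $H_i \cap H_j$ is then a codimension-$2$ subspace containing the codimension-$2$ subspace $L$, dimension counting forces $H_i\cap H_j = L$.

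The heart of the argument is to parametrise the pencil of hyperplanes of $\XQ$ through $L$ by a circle. Fix $x_0 \in L$ and a $2$-dimensional subspace $P_0$ of $\XQ$ meeting $L$ transversely at $x_0$. Every hyperplane containing $L$ meets $P_0$ in a line through $x_0$, and is recovered as the subspace generated by $L$ together with that line; this identifies such hyperplanes with directions in $P_0$ at $x_0$, parametrised by $\phi \in [0,\pi)$, and the closed half-spaces they bound by $\phi \in [0,2\pi)$. Write $Z_\phi$ for the chosen half-space. The key computation, carried out once in each of the three geometries (in $\R^n$ for the Euclidean case, in $\R^{n+1}$ for the spherical case, and in the Klein model for the hyperbolic case), shows that for every $y \notin L$ the set $\Phi_y := \{\phi : y \in Z_\phi\}$ is a closed arc of length exactly $\pi$ on the parameter circle, determined by the angle of the normal component of $y$ at $x_0$. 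This uniform semicircle description across the three geometries is the main technical obstacle; once it is in hand, the remainder is circle combinatorics.

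Writing $Z_i = Z_{\phi_i}$, pick a point $y_0$ in the non-empty interior of $Z_1 \cap Z_2 \cap Z_3$; necessarily $y_0 \notin L$, and all three $\phi_i$ belong to the open semicircle associated to $\Phi_{y_0}$. Relabel so that $\phi_1, \phi_2, \phi_3$ appear in this order along that open semicircle. Then the arc from $\phi_1$ to $\phi_3$ through $\phi_2$ has length strictly less than $\pi$, while the complementary arc has length strictly greater than $\pi$; hence any closed arc of length $\pi$ containing both $\phi_1$ and $\phi_3$ cannot run the long way round and must contain the whole short arc, in particular $\phi_2$. Therefore, for any $y \in Z_1 \cap Z_3$: either $y \in L \subseteq H_2 \subseteq Z_2$, or $y \notin L$ and $\phi_1, \phi_3 \in \Phi_y$ forces $\phi_2 \in \Phi_y$, i.e.\ $y \in Z_2$. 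This gives $Z_1 \cap Z_3 \subseteq Z_2$ and so $Z_1 \cap Z_2 \cap Z_3 = Z_1 \cap Z_3$, as desired.
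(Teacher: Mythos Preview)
Your argument is correct. The paper's proof and yours share the same reductions to the Euclidean picture (the cone in $\R^{n+1}$ for the sphere, the Klein model for hyperbolic space), but diverge at the final, two-dimensional step. The paper normalises so that $Z_1=\{x_1\ge 0\}$ and $Z_2=\{x_2\ge 0\}$, writes $\partial Z_3=\{a_1x_1+a_2x_2=0\}$, and then runs a short case analysis on the sign of $a_2$ and the orientation of $Z_3$: one case is killed by thickness, and in each of the remaining three one of the $Z_i$ is redundant. You instead parametrise the whole pencil of half-spaces through $L$ by a circle, prove the semicircle lemma $\Phi_y=\{\phi:y\in Z_\phi\}$ has length $\pi$, and then use an ordering argument to identify the middle half-space as redundant. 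What you gain is a symmetric, coordinate-free endgame (no privileged pair $Z_1,Z_2$, no sign casework); what you pay is the extra setup of the transverse $2$-plane $P_0$, the consistent labelling $\phi\mapsto Z_\phi$ with $Z_{\phi+\pi}=Z_\phi'$, and the verification of the semicircle lemma itself. Since that verification in each geometry boils down to exactly the same linear inequality $y_1\cos\phi+y_2\sin\phi\ge 0$ that the paper manipulates directly, the two proofs are close cousins rather than truly independent arguments; yours is a cleaner repackaging of the same two-dimensional content.
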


\begin{proof}
We may assume that $Z_1$, $Z_2$ and $Z_3$ are pairwise different.
Then $\partial Z_1$, $\partial Z_2$ and $\partial Z_3$ are pairwise different for otherwise $Z_1\cap Z_2\cap Z_3$ is not thick.

We first prove the result for $\XQ=\R^n$, the Euclidean space.
Then each $\partial Z_i$ is an Euclidean hyperplane in $\R^n$
and   $\partial Z_1 \cap \partial Z_2 \cap \partial Z_3$ is a codimension $2$ affine subspace of $\R^n$.
Applying some  Euclidean transformation if needed,  we may assume that
   $Z_1 = \{ (x_1, x_2, \ldots , x_n) \in \R^n \; : \; x_1\geq 0\}$ and
   $Z_2 = \{ (x_1, x_2, \ldots , x_n) \in \R^n \; : \; x_2\geq 0\}$.
Then,
  $\partial Z_1 \cap \partial Z_2 \cap \partial Z_3 = \{ (x_1, x_2, \ldots , x_n) \in \R^n \; : \; x_1=x_2=0\}$ and
  $\partial Z_3 = \{ (x_1, x_2, \ldots , x_n) \in \R^n \; : \;  a_1 x_1 + a_2 x_2 =0\}$
with
  $a_1 a_2 \neq 0$ and $a_1 >0$.

Assume  $a_2 >0$. If
  $Z_3  = \{ (x_1, x_2, \ldots , x_n) \in \R^n \; : \; a_1 x_1 + a_2 x_2 \leq 0\} $ then $Z_1 \cap Z_2 \cap Z_3 \subseteq
    \partial Z_1 \cap \partial Z_2$
contradicting the thickness of $Z_1 \cap Z_2 \cap Z_3$. So,
   $Z_3 = \{ (x_1, x_2, \ldots , x_n) \in \R^n \; : \; a_1 x_1 + a_2 x_2 \geq  0\} $
and $Z_1 \cap Z_2 = Z_1 \cap Z_2 \cap Z_3 $.

To finish the proof for $\XQ=\R^{n}$, it remains to deal with   $a_2 <0$.
If
   $Z_3 = \{ (x_1, x_2, \ldots , x_n) \in \R^n \; : \; a_1 x_1 + a_2 x_2 \leq 0\}$ then $Z_1 \cap Z_3 = Z_1 \cap Z_2 \cap Z_3$.
Otherwise,  $Z_3 = \{ (x_1, x_2, \ldots , x_n) \in \R^n \; : \; a_1 x_1 + a_2 x_2 \geq 0\}$ and thus
   $Z_2 \cap Z_3 = Z_1 \cap Z_2 \cap Z_3$.
This finishes the proof in the Euclidean case.

In case $\XQ=\SQ^n\subseteq \R^{n+1}$ each $Z_i=\SQ^n\cap Y_i$ with $Y_i$ a closed half-space of $\R^{n+1}$ such that $0\in \partial Y_i$.
As $Z_1\cap Z_2 \cap Z_3$ is thick in $\SQ^n$ and $Y_1\cap Y_2 \cap Y_3$ contains the Euclidean cone with center $0$ and base $Z_1\cap Z_2\cap Z_3$, we deduce that $Y_1\cap Y_2 \cap Y_3$ is thick in $\R^{n+1}$.
Then, from the Euclidean case we deduce that $Y_1\cap Y_2 \cap Y_3=Y_i\cap Y_j$ for some $i,j\in \{1,2,3\}$ and hence $Z_1\cap Z_2 \cap Z_3=Z_i\cap Z_j$.

To prove the result in the hyperbolic case we use the Klein model $\KQ^n$ seen as subset of $\R^n$.
Then the hyperplanes are the intersection of Euclidean hyperplanes with $\KQ^n$ and the result follows again from the Euclidean case.
\end{proof}

\begin{lemma}\label{ConnectedCod2L}
 Let $\h$ be a countable set of proper subspaces of $\XQ$. Then
\begin{enumerate}
 \item $\XQ \ne \bigcup_{S\in \h} S$.
 \item If each  $S\in \h$ has   codimension  at least $2$  then for   $x,y\in \XQ \setminus \cup_{S\in \Gamma} S$ there is $z\in \XQ\setminus \{ x,y,x',y'\}$ such that $([x,z]\cup [z,y])\cap \cup_{S\in \Gamma} S=\emptyset$. In particular,  $\XQ \setminus \cup_{S\in \h} S$ is path  connected.
\end{enumerate}
\end{lemma}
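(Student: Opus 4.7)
The plan is to deduce (1) from a Baire category argument and then to reduce (2) to (1) by replacing each codimension-$\geq 2$ subspace $S\in\h$ with two thicker, but still proper, subspaces, one containing the cone from $x$ over $S$ and one containing the cone from $y$ over $S$.

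For (1), I would observe that every proper subspace of $\XQ$ is closed (it is a complete totally geodesic submanifold of the complete ambient space) and has empty interior (its dimension is strictly less than $\dim \XQ$), hence is nowhere dense. Since $\XQ$ is a complete, locally compact metric space in each of the three constant-curvature geometries, it is Baire, so a countable union of closed nowhere dense sets is meager and in particular cannot equal $\XQ$.

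For (2), the key device is that if $S\in\h$ satisfies $\dim S\le n-2$ and $x\notin S$, then the subspace $T(x,S)$ generated by $\{x\}\cup S$ has dimension $\dim S+1\le n-1$, by the dimension fact recalled at the end of Section~2, and is therefore a proper subspace of $\XQ$. Now suppose $z\in\XQ\setminus\{x,x'\}$ and $[x,z]\cap S$ contains a point $s$. Then $s\ne x$ (because $x\notin S$) and $s\ne x'$ (because $z\ne x'$ forces $x'\notin [x,z]$), so the geodesic through $x$ and $s$ is unique and is contained in the subspace generated by $\{x,s\}\subseteq\{x\}\cup S$, whence $z\in T(x,S)$. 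Contrapositively, if $z\notin T(x,S)\cup\{x,x'\}$ then $[x,z]\cap S=\emptyset$; the same holds with $y$ replacing $x$.

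I would then apply (1) to the countable family
\[
\h'=\{T(x,S):S\in\h\}\cup\{T(y,S):S\in\h\}\cup\{\{x\},\{y\},\{x'\},\{y'\}\}
\]
of proper subspaces (the singletons $\{x'\},\{y'\}$ being relevant only in the spherical case) to extract a point $z\in\XQ\setminus\bigcup\h'$, which by the preceding observation satisfies all the required properties; path-connectedness of $\XQ\setminus\bigcup_{S\in\h}S$ is then immediate via the polygonal path $[x,z]\cup[z,y]$. The only real subtlety is the spherical case, where one must rule out $z\in\{x',y'\}$ so that the geodesic segments are well-defined and the argument $z\in T(x,S)$ goes through; this is exactly why the antipodal singletons are thrown into $\h'$.
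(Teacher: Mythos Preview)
Your proposal is correct and follows essentially the same route as the paper: Baire for (1), then enlarge each $S$ to the subspaces generated together with $x$ and with $y$ and apply (1) again. One small simplification: you need not throw the singletons $\{x\},\{y\},\{x'\},\{y'\}$ into $\h'$, since $x\in T(x,S)$ automatically (and in the spherical case subspaces are antipode-closed, so $x'\in T(x,S)$ as well), whence any $z$ avoiding all the $T(x,S)$ and $T(y,S)$ already avoids $x,y,x',y'$.
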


\begin{proof}
(1) If $S \in \Gamma$, the complement of $S$ is a dense open set. Hence the complement of $\bigcup_{S\in \h} S$ is a countable intersection of dense open subsets of $\XQ$. By Baire's category theorem, this intersection is dense and hence non-empty. Thus the result follows. 

(2) For  $S\in \h$ and $x\in \XQ$ let $S_x$ denote the subspace of $\XQ$ generated by $S\cup \{x\}$.
Because of the assumption, each $S_x$ is a proper subspace of $\XQ$.
Assume  $x,y\in \XQ \setminus \bigcup_{S\in \h} S$.
By (1), there exists  $z\in \XQ \setminus \bigcup_{S\in \h} (S_x \cup S_y)$.
If $u\in (x,z)\cap S$ for some $S\in \h$ then $x$ and $u$ are different and non-antipodal points in $S_x$ and hence the geodesic containing both $x$ and $u$ is contained in $S_x$, contradicting the fact that $z\not\in S_x$. Therefore, the concatenation of the segments $[x,z]$ and $[z,y]$ is a path joining $x$ and $y$ contained in $\XQ\setminus \bigcup_{S\in \h} S$.
Hence, (2) follows.
\end{proof}

\section{Tessellations}

A \emph{tessellation} of $\XQ$ is a set $\te$ consisting of thick polyhedra of $\XQ$ such that the following properties are satisfied:
 \begin{enumerate}
 \item $\XQ=\cup_{P\in \te} P$, and
 \item $P^0\cap Q^0=\emptyset$ for every two different members $P$ and $Q$ of $\te$.
 \end{enumerate}
If only the second condition is satisfied then we call $\te$ a partial tessellation of $\XQ$.
 The members of a partial tessellation are called \emph{tiles}.
It is easy to see that $P\cap Q^0=\emptyset$  for any two distinct tiles $P$ and $Q$. In particular,
   $P\cap Q = \partial P  \cap \partial Q$.
All the tessellations and partial tessellations that will show up will be locally finite, that is every compact subset intersects only finitely many tiles.
One readily verifies that  every locally finite partial tessellation has to be countable.
If $T$ is a tile of a tessellation $\te$ then
     $\partial T = \cup_{R\in (\te \setminus \{ T\} )} T\cap R$.
This is not necessarily true if $\te$ is a partial tessellation.

\begin{definition}\label{CellD}
A \emph{cell} $C$  of a  partial  tessellation  $\te$ of $\XQ$ is a non-empty  intersection of tiles satisfying the following property:
if $T\in \te$ then either $C\subseteq T$ or $C^r\cap T=\emptyset$.
\end{definition}

Clearly every cell of a locally finite partial tessellation $\te$ is a polyhedron and it is contained in only finitely many tiles of $\te$.
As the intersection of two different tiles is contained in the boundary of both, the cells of codimension 0 are precisely the tiles and hence the codimension of the intersection of two different tiles is at least 1.
By definition, a  \emph{side} of $\te$ is a cell of codimension 1 and an  \emph{edge} of $\te$  is a cell of codimension 2.
If $T$ is a tile of $\te$ and $C$ is a cell (respectively, side, edge) of $\te$ contained in $T$ then we say that $C$ is a cell (respectively, side, edge) of $T$ in $\te$. In case the tessellation is clear from the context we simply say that $C$ is a cell, side or edge of $T$.

\begin{lemma}\label{CellGeneratedL}
Let $\te$ be a locally finite tessellation of $\XQ$. If $x\in \XQ$ and $C=\bigcap_{T\in \te,x\in T} T$
then $C$ is a cell of $\te$ and $x\in C^r$.
\end{lemma}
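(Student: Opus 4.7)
By local finiteness only finitely many tiles $T_1,\dots,T_k$ contain $x$, so $C=\bigcap_{i=1}^{k}T_i$ is a finite intersection of polyhedra and hence itself a polyhedron. I would begin by fixing an open ball $V$ around $x$ so small that (i) $V$ meets no tile of $\te$ outside $\{T_1,\dots,T_k\}$ and (ii) $V$ meets no essential hyperplane of any $T_i$ that avoids $x$; both conditions hold by local finiteness. Letting $A_i$ be the intersection of those essential half-spaces of $T_i$ whose boundary contains $x$ (with the convention $A_i=\XQ$ if $x\in T_i^\circ$), each $A_i$ is a closed convex polyhedral cone with apex $x$, we have $V\cap T_i=V\cap A_i$, hence $V\cap C=V\cap A$ with $A=\bigcap_{i}A_i$, and the covering $\XQ=\bigcup_{T\in\te}T$ forces $V\subseteq \bigcup_{i}A_i$.

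The central technical step, which I expect to be the main obstacle, is to show that $A$ is a linear subspace of $\XQ$ passing through $x$. The setting is: finitely many closed convex polyhedral cones with common apex, pairwise disjoint interiors (inherited from the tessellation) and whose union covers a neighborhood of the apex. After passing to the Klein model in the hyperbolic case the question becomes purely Euclidean. Disjointness of interiors forces $A_i\cap A_j\subseteq \partial A_i\cap\partial A_j$ for $i\ne j$, hence $A\subseteq \partial A_i$ for each $i$ when $k\geq 2$; since a convex set with non-empty relative interior cannot be covered by a finite union of proper affine subspaces of its own affine hull, $A$ must lie inside some bounding hyperplane $H_i$ of each $A_i$. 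Iterating inside $\bigcap_i H_i$ and invoking Lemma~\ref{IntersectionTwoL} to eliminate the triples of half-spaces that become redundant at each step should ultimately force $A$ to coincide with its lineality space, i.e.\ to be a subspace.

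Granted that $A$ is a subspace, the claim $x\in C^r$ follows easily. Let $S$ be the subspace generated by $C$. From $C\subseteq A$ one has $S\subseteq A$, while $V\cap A=V\cap C$ spans $A$ (any open ball in a subspace generates it), giving $A\subseteq S$; hence $S=A$. Then $V\cap S=V\cap A=V\cap C$ is a relatively open neighborhood of $x$ in $S$ contained in $C$, which means $x\in C^r$.

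Finally, to verify the cell axiom, it remains to establish that $C^r\cap T=\emptyset$ for every tile $T$ with $x\notin T$. By the choice of $V$ every $y\in V\cap C$ is contained in exactly the tiles $T_1,\dots,T_k$: any tile containing $y$ must meet $V$ and so lies in $\{T_1,\dots,T_k\}$, while $y\in C$ places $y$ in each $T_i$. Thus $V\cap C$ is a non-empty $S$-open subset of $C^r$ whose points share the same enclosing tiles as $x$. The same local reasoning applied at any $y\in C^r$ shows that for every tile $T$ the intersection $C^r\cap T$ is open in $C^r$. Since $C^r$ is convex and therefore connected, and since both $C^r\cap T$ and $C^r\setminus T$ are then open in $C^r$, one of them must be empty; as $x\in C^r\setminus T$ is non-empty whenever $x\notin T$, we conclude $C^r\cap T=\emptyset$, which establishes the cell condition.
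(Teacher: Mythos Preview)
Your overall strategy---localize at $x$ via the cones $A_i$ and try to show that $A=\bigcap_i A_i$ is already a subspace---is quite different from the paper's argument and is a reasonable heuristic, but as written there are two genuine gaps.

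\textbf{The subspace claim is not proved.} You correctly observe that the $A_i$ are full-dimensional polyhedral cones with common apex $x$, pairwise disjoint interiors, and $\bigcup_i A_i\supseteq V$; and you reduce to showing that $A$ equals its lineality space. But the actual argument you offer---``iterating inside $\bigcap_i H_i$ and invoking Lemma~\ref{IntersectionTwoL} to eliminate triples of half-spaces''---is not a proof. Lemma~\ref{IntersectionTwoL} concerns three half-spaces whose bounding hyperplanes meet in codimension~$2$ and whose intersection is thick; it is not clear how to organize the essential half-spaces of the various $T_i$ into such triples, nor what the ``iteration'' is, nor why it terminates with $A$ a subspace. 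This step is exactly the heart of the lemma (it is equivalent to $x\in C^r$), so without it you have not advanced past the statement.

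\textbf{The clopen argument for the cell axiom is circular.} You assert that ``the same local reasoning applied at any $y\in C^r$'' shows $C^r\cap T$ is open in $C^r$. But the local reasoning at $x$ worked because the tiles containing a point of $V\cap C$ are \emph{exactly} $T_1,\dots,T_k$---both the ``at most'' (from the choice of $V$) and the ``at least'' (from $w\in C$) directions matched. At a point $y\in C^r\cap T$ with $T\notin\{T_1,\dots,T_k\}$, the tiles containing $y$ strictly include $\{T_1,\dots,T_k\}$. For $w\in V_y\cap C$ you only get that $w$ lies in tiles containing $y$ and in each $T_i$; you cannot conclude $w\in T$. Equivalently, the cell $C_y=\bigcap_{y\in T'}T'$ may a priori have strictly smaller dimension than $C$, so the local picture at $y$ does not transfer to an $S$-open neighbourhood in $C$. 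Ruling this out is precisely the cell property you are trying to establish.

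For comparison, the paper proceeds in the opposite order and avoids the subspace claim entirely. It first proves the cell axiom directly: given $y\in C^r\cap Q$ with $Q$ a foreign tile, it uses the open-cone construction of Lemma~\ref{ConeOpenL} to produce a point of $Q^\circ$ inside some $T_i$, contradicting the tessellation property. Only afterwards does it show $x\in C^r$, by a separate contradiction argument involving essential hyperplanes of one $T_i$. Both halves are elementary and use only Lemmas~\ref{ConeOpenL} and~\ref{EssentialFacesL}; neither requires analyzing the fan $\{A_i\}$ globally.
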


\begin{proof}
Let $x\in \XQ$ and  let $T_1,\dots,T_k$ be the tiles of $\te$ containing $x$.
Hence,
    $C=T_1\cap \dots \cap T_k$ and $T_1,\dots,T_k$
are the only tiles containing $C$.

To  prove that  $C$ is a cell we need to show that  if $Q$ is a tile different  from any $T_i$ then $Q\cap C^r=\emptyset$.
We first consider the case where $\XQ$ is spherical and $x'\in C$. In this case we prove that $\XQ=T_1\cup \dots \cup T_k$, which of course implies the desired statement.
Let $U$ be an open convex neighborhood of $x$ such that $U\cap T=\emptyset$ for every $T\in \te\setminus \{T_1,\dots,T_k\}$.
Let  $z\in \XQ$.
If $z=x$ or $x'$ then, by assumption, $z \in T_i$ for some (all) $i$.
Otherwise the geodesic containing $x'$ and $z$ also contains  $x$ and therefore it intersects $U\setminus \{x\}$.
In fact there exist non-antipodal elements $x_1$ and $x_2$ in $U$
that both  belong to the complete geodesic containing $x$ and $z$ and are such that
$x\in (x_1,x_2)$ and $z\not\in (x_1,x_2)$.
Then $z$ belongs to either $[x',x_1]$ or $[x',x_2]$.
By symmetry we may assume that $z\in [x',x_1]$. Moreover, as $x_1\in U \subseteq T_1\cup \dots \cup T_k$, there is $i=1,\dots,k$ with $x_1\in T_i$.
Then $z\in T_i$, as desired.

So we may assume that either $\XQ$ is not spherical or $x'\not\in C$ and we argue by contradiction.
Thus,  suppose that there exist $Q\in \te\setminus \{T_1,\dots,T_k\}$ and $y\in Q\cap C^r$.
In particular, $y\ne x$ and if $\XQ$ is spherical then $y\ne x'$.
Let $U$ be an open convex neighborhood of $x$ such that $U\cap T=\emptyset$ for every $T\in \te\setminus \{T_1,\dots,T_k\}$ and $y\not \in U$.
Hence, we can take the geodesic $g$ containing $x$ and $y$ and take a point
$y_1\in C^r$ such that $y\in (x,y_1)$.
This point exists because $g$ is contained in the subspace generated by  $C$ and hence $y$ is an interior point of $g\cap C$.
By Lemma~\ref{ConeOpenL}, $W=\cup_{u\in U} (y_1,u)$ is an open subset of $\XQ$.
Since $y\in W\cap Q$ and $Q$ is thick, we get that $W$ contains a point $z\in Q^\circ$.
Let $u\in U$ be such that  $z\in (y_1,u)$.
As $U\subseteq T_1\cup \dots \cup T_k$, $u\in T_i$ for some $i$, $(y_1,u)\subseteq T_i$ and therefore $z\in T_i\cap Q^\circ$.
However, this contradicts with the fact that $T_i$ and $Q$ are different tiles of the the tessellation $\te$.
So, in this case, $C$ indeed is a cell.

To prove the second part, assume that $x\not\in C^r$ and let $L$ be the subspace generated by $C$.
Clearly, the dimension of $L$ is positive and $k>1$.
Therefore $C\subseteq \partial T_i$ for every $i$.
Consider $C$ as a thick polyhedron of $L$.
As $x\not\in C^r$, it follows from Lemma~\ref{EssentialBoundaryL}, that
 $x$ belongs to one of the essential hyperplanes of $C$, as thick
polyhedra of $L$.
Fix $y\in C^r$ and an open interval $(y,z)$ containing $x$.
Then $(y,x)\subseteq C^r$ and $(x,z)\cap C=\emptyset$.
Therefore $z\not\in T_i$ for some $i$.
Renumbering the $T_i$'s and replacing $(x,z)$ by a smaller interval if necessary, one may assume that $(x,z)\cap T_1=\emptyset$.
We claim that $H\cap [y,z]=\{x\}$ (equivalently $y\not\in H$) for some essential hyperplane $H$ of $T_1$.
Otherwise $y$ belongs to all the essential hyperplanes of $T_1$ containing $x$.
Then $(x,z)$ is contained in all these essential hyperplanes.
If $V$ is an open neighborhood of $x$ only intersecting the essential hyperplanes of $T_1$ containing $x$ then $(x,z)\cap V$ is a non-empty subset contained in $T_1$, contradicting the construction.
This proves the claim.
Note that $x\in U\cap H\cap T_{1}$. Hence, $U\cap H\cap T_{1}$ is a non-empty open subset  of $H\cap T_{1}$.
So, by the second part of Lemma~\ref{InteriorLemmaL}, there exists $w\in (U\cap H \cap T_{1})\cap (H\cap T_{1})^{r}$.
Hence,  $H$ is the only essential hyperplane of $T_1$ containing $w$.

We claim that $(w,y)\subseteq T_{1}^{\circ}$.
Indeed, for suppose the contrary, then there exists $u\in (w,y) \cap \partial T_{1}$.
So $u\in H_{1}$ for some essential hyperplane $H_{1}$ of $T_{1}$.
If $H_{1}\neq H$ and $Z_1$ is the closed half-space of $\XQ$ with $\partial Z_1= H_{1}$ and $T_{1}\subseteq Z_1$ then $w,y\in Z_{1}^\circ$.
Then $u\in (w,y)\subseteq Z_1^\circ$, a contradiction.
So, $H_{1}=H$ and $u\in (w,y)\cap H$.
Now $y\not\in H$ and $w\in T_{1}$.
Hence a reasoning as above  (interchanging the role of $w$ and $y$ and replacing
$H_{1}$ by $H$) yields that $(w,y)\cap H =\emptyset$, a contradiction.

Because of the claim and since $w\in U$ and $U$ is open in $\XQ$, there exists $z_{1}\in U$ such that
$w \in (y,z_1)$.
As $w\in H$ and $y\not\in H$, we have $z_1\not\in T_1$.
However $U\subseteq T_1\cup \dots \cup T_k$ and hence $z_1\in T_i$ for some $i\ge 2$.
Then $[y,z_1]\subseteq T_i$ and we conclude that $\emptyset \ne (y,w)\subseteq T_1^\circ\cap T_i$ with $i>1$, a contradiction.
\end{proof}

The cell of $\te$ formed by the intersection of the tiles containing $x$ is the smallest cell containing $x$ and we call it the \emph{cell} of $\te$ \emph{generated} by $x$.
By Lemma~\ref{CellGeneratedL}, the relative interiors of the cells of $\te$ form a partition of $\XQ$. The cell generated by $x$ is the unique cell of $\te$ whose relative interior contains $x$.

\begin{lemma} \label{SidePropertiesL}
Let $\te$ be a locally finite partial tessellation of $\XQ$. Let $T_1$ and $T_2$ be two tiles of $\te$ and let $S=T_1 \cap T_2$.
Assume  $S$ has codimension $1$ and let $H$ be  the hyperplane generated by $S$.
Then the  following properties hold:
\begin{enumerate}
\item $H$ is an essential hyperplane of both $T_1$ and $T_2$.
\item $T_1$ and $T_2$ are contained in different closed half-spaces defined by $H$.
\item $S^r \cap T=\emptyset$ for every tile $T$ different from both $T_1$ and $T_2$.
\item $S$ is a side of $\te$.
 \end{enumerate}
\end{lemma}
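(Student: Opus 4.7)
My plan is to start by recording structural facts about $S$ that underlie all four parts. First, $S = T_1 \cap T_2$ is a polyhedron (its defining family of half-spaces is the union of the two locally finite defining families of $T_1$ and $T_2$), so by Lemma~\ref{InteriorLemmaL} the relative interior $S^r$ is open and dense in $S$ inside $H$. Second, since distinct tiles of a partial tessellation satisfy $T_1 \cap T_2^\circ = T_1^\circ \cap T_2 = \emptyset$, we have $S \subseteq \partial T_1 \cap \partial T_2$.

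For Part~(1), I would fix $x \in S^r$ and apply Lemma~\ref{EssentialBoundaryL} to $T_1$: $\partial T_1$ is the union of the intersections $T_1 \cap \partial Z$ over essential half-spaces $Z$ of $T_1$. By local finiteness, only finitely many such $\partial Z_1, \ldots, \partial Z_k$ meet a small neighborhood $U$ of $x$, so the non-empty open subset $U \cap S^r$ of $H$ is covered by $\bigcup_i \partial Z_i$. If none of the $\partial Z_i$ equalled $H$, each intersection $\partial Z_i \cap H$ would be a proper subspace of $H$, and a finite union of such subspaces cannot cover the non-empty open set $U \cap S^r \subseteq H$ by Lemma~\ref{ConnectedCod2L}(1) applied to $H$. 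Hence some $\partial Z_i$ equals $H$, making $H$ essential for $T_1$; the argument for $T_2$ is symmetric.

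For Part~(2), the crucial observation is that for $x \in S^r$, $H$ is the \emph{only} essential hyperplane of $T_1$ through $x$. Indeed, if another essential hyperplane $H' \neq H$ contained $x$, with corresponding closed half-space $Z' \supseteq T_1$, then $T_1 \cap H \subseteq Z' \cap H$, a closed half-space of $H$ whose boundary $H' \cap H$ is a proper subspace of $H$, so $x \in H' \cap H$ would place $x$ on the boundary of $T_1 \cap H$ in $H$, contradicting $x \in S^r \subseteq (T_1 \cap H)^r$. Thus in a small ball $B$ around $x$ that avoids every essential hyperplane of $T_1$ and $T_2$ other than $H$, one has $T_1 \cap B = Z_1 \cap B$ and $T_2 \cap B = Z_2 \cap B$, where $Z_i$ is the essential half-space of $T_i$ with boundary $H$. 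If $Z_1 = Z_2$ then $T_1^\circ \cap T_2^\circ \supseteq Z_1^\circ \cap B \neq \emptyset$, contradicting the disjointness of tile interiors; hence $Z_2 = Z_1'$.

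Part~(3) then follows by the same local analysis at $x \in S^r$: if a third tile $T$ contained $x$, shrinking $B$ further so it meets only the tiles through $x$ (local finiteness of $\te$), the density of $T^\circ$ in $T$ (Lemma~\ref{InteriorLemmaL}) yields a non-empty open set $V \subseteq T^\circ \cap B$. Disjointness of tile interiors, combined with the identifications $T_1^\circ \cap B = Z_1^\circ \cap B$ and $T_2^\circ \cap B = (Z_1')^\circ \cap B$ from Part~(2), forces $V \subseteq B \cap H$, impossible since $V$ is a non-empty open subset of $\XQ$. Part~(4) is then immediate: $S$ is a non-empty intersection of tiles of codimension $1$ and, by Part~(3), satisfies the cell condition of Definition~\ref{CellD}, hence is a side. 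I expect Part~(2) to be the main obstacle, since the other parts all rely on the local identification of each $T_i$ with a closed half-space near a point of $S^r$.
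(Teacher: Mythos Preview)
Your proof is correct and follows essentially the same local strategy as the paper: work at a point $x\in S^r$, show that $H$ is the unique essential hyperplane of each $T_i$ through $x$, deduce that in a small ball $B$ one has $T_i\cap B = Z_i\cap B$ for the corresponding essential half-space, and read off (2), (3), (4) from this local identification.

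The only noteworthy difference is in part~(1). The paper argues directly: if $x\in S^r$ lies on an essential hyperplane $H_1$ of $T_1$, then a small open set $U\cap H\subseteq S\subseteq T_1$ would meet both open half-spaces of $H_1$ unless $H_1=H$; this single observation gives both essentiality of $H$ and its uniqueness among essential hyperplanes through $x$. You instead use a covering argument (finitely many proper subspaces of $H$ cannot cover the open set $U\cap S^r$) to get (1), and then separately reprove uniqueness in part~(2) via the boundary argument $x\in H'\cap H\Rightarrow x\in\partial_H(T_1\cap H)$. Your second argument is in fact equivalent to the paper's direct one, so the covering step in (1) is a slight detour. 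Two small remarks: the statement you need in (1) is really Lemma~\ref{EssentialFacesL} (the essential half-spaces suffice to describe $\partial T_1$), and Lemma~\ref{ConnectedCod2L}(1) literally asserts $\XQ\neq\bigcup S$, but its proof gives density of the complement, which is what you actually use to forbid covering a non-empty open subset of $H$.
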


\begin{proof}  Let $x\in S^r$ and let $U$ be an open neighborhood of $x$ such that $U \cap H \subseteq S$.
As $x\in \partial T_1$, from Lemma~\ref{EssentialFacesL} we obtain that $x\in H_1$, for some essential hyperplane $H_1$ of  $T_1$.
If $H_1\neq H$ then $H\cap U$ intersects non-trivially the two open half spaces defined by $H_1$, contradicting the fact that $T_1$ does not intersect one of these open half-spaces.
Therefore $H=H_1$. Hence (1)  follows.
It also proves that $H$ is the only essential hyperplane of $T_1$ containing $x$ and, by symmetry, it also is the only essential hyperplane of $T_2$ containing $x$.
So there is an open ball $B$ of $\XQ$ centered in $x$ and not intersecting any essential hyperplane of $T_1$ or $T_2$ different from $H$.
Let $Z$ be the closed half-space with boundary $H$ and containing $T_1$.
Then $B\cap Z^{\circ}$  is one of the two non-empty connected components of $B\setminus H$ and it is contained in $T_{1}^{\circ}$.
Since $T_{1}^{\circ} \cap T_2=\emptyset$ it follows that $T_2\subseteq Z'$. This proves (2).
The same argument then shows that $B\cap Z' \subseteq T_2$. Therefore $B\subseteq T_{1} \cup T_2$.
If $T$ is any tile such that $x\in T$ then $B$ contains a point in $T^{\circ}$. So $T^{\circ} \cap (T_1 \cup T_2) \neq \emptyset$ and therefore
$T=T_1$ or $T=T_2$. Hence (3) follows. Clearly (4) is a consequence of (3).
\end{proof}

The following proposition follows at once from Lemma~\ref{SidePropertiesL}.

\begin{proposition} \label{SideIncludedTwoTilesP}
 Every side of a locally finite partial  tessellation $\te$ is contained in exactly two tiles and it is the intersection of these  tiles.
 \end{proposition}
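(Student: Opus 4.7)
The plan is to establish both assertions simultaneously by picking any two tiles containing the side $S$, applying Lemma~\ref{SidePropertiesL} to that pair, and then ruling out any further tile that might contain $S$. By Definition~\ref{CellD}, a cell is a non-empty intersection of tiles, so $S$ equals the intersection of all tiles of $\te$ that contain it. Since $S$ has codimension $1$, it is not itself a tile (tiles have codimension $0$), hence $S$ is contained in at least two tiles; pick any distinct such $T_1,T_2$ and set $S' = T_1\cap T_2$.

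In order to apply Lemma~\ref{SidePropertiesL} to the pair $(T_1,T_2)$, I must check that $S'$ has codimension $1$. On the one hand, the inclusion $S\subseteq S'$ forces the subspace generated by $S'$ to contain the hyperplane generated by $S$, so the codimension of $S'$ is at most $1$. On the other hand, $S'\subseteq \partial T_1$ has empty interior in $\XQ$ (since $T_1$ is a polyhedron), so $S'$ is not thick in $\XQ$ and its codimension is at least $1$. Hence Lemma~\ref{SidePropertiesL} applies and yields that $S'$ is itself a side, and that $(S')^r\cap T = \emptyset$ for every tile $T$ distinct from $T_1$ and $T_2$.

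Finally, to identify $S$ with $S'$, note that since $S\subseteq S'$ and both polyhedra have codimension $1$, they are thick in one and the same hyperplane $H$; hence $S^r$, being an open subset of $H$ contained in $S'$, is contained in $(S')^r$. If a third tile $T_3\notin \{T_1,T_2\}$ contained $S$, one would have $\emptyset\ne S^r\subseteq T_3\cap (S')^r$, directly contradicting Lemma~\ref{SidePropertiesL}(3). Therefore $T_1$ and $T_2$ are the only tiles containing $S$, and $S = T_1\cap T_2 = S'$, which proves both claims of the proposition. The only delicate point is the verification that $S'$ has codimension exactly $1$, balancing the lower bound from $S'\subseteq \partial T_1$ against the upper bound from $S\subseteq S'$; once this is secured, the identification $S=S'$ is automatic from the cell axiom.
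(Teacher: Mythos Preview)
Your proof is correct and is essentially the detailed unpacking of what the paper records in one line: ``follows at once from Lemma~\ref{SidePropertiesL}.'' You make explicit the two small points the paper leaves implicit---that any two tiles $T_1,T_2$ containing $S$ have $T_1\cap T_2$ of codimension exactly~$1$ (so that Lemma~\ref{SidePropertiesL} applies), and that $S^r\subseteq (T_1\cap T_2)^r$ (so that part~(3) of that lemma excludes a third tile)---and both verifications are handled correctly.
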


\begin{lemma}
\label{PartialTessellationL}
Let $\te$ be a locally finite partial tessellation of $\XQ$, let $H$ be a hyperplane of $\XQ$ and let $Z$ be a closed half-space defined by $H$.
Then
 $$\te_{Z} =\{ T\cap H : \;  T\in \te \text{ and }Z \text{ is an essential closed half-space of } T\}$$
is a locally finite partial tessellation of $H$.
\end{lemma}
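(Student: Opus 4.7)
\emph{Plan.} I will verify the three defining properties of a locally finite partial tessellation of $H$: each element of $\te_Z$ is a thick polyhedron of $H$; the interiors in $H$ of distinct elements are disjoint; and the collection is locally finite in $H$.

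Two of these are quick. Fix $T\in\te$ for which $Z$ is essential. By Lemma~\ref{EssentialBoundaryL}, $T\subseteq Z$ and $T\cap H=T\cap\partial Z$ is thick in $H$. Writing $T=\bigcap_{W\in\h}W$ with $\{\partial W:W\in\h\}$ locally finite in $\XQ$, I intersect with $H$: each factor $W\cap H$ is either all of $H$ (when $H\subseteq W$, and can be discarded) or a closed half-space of $H$ whose boundary is $\partial W\cap H$. Since $\{\partial W\}$ is locally finite in $\XQ$, so is $\{\partial W\cap H\}$ in $H$, and therefore $T\cap H$ is a thick polyhedron of $H$. For the local finiteness of $\te_Z$, since $H$ is closed in $\XQ$, any compact $K\subseteq H$ is compact in $\XQ$, hence meets only finitely many tiles of $\te$ and a fortiori only finitely many elements of $\te_Z$.

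The crux is disjointness of interiors. Suppose two distinct elements $T_1\cap H$ and $T_2\cap H$ of $\te_Z$ share a common interior point $x$ in $H$; in particular $T_1\ne T_2$. Pick an open neighborhood $U$ of $x$ in $H$ with $U\subseteq T_1\cap T_2$ and a small open ball $B$ of $\XQ$ around $x$ meeting only finitely many boundaries of essential half-spaces of $T_1$. I would then show that every essential half-space $W$ of $T_1$ with $x\in\partial W$ must equal $Z$. Indeed, if $\partial W\ne H$ then either $\partial W\cap H=\emptyset$, forcing $H\subseteq W^\circ$ and contradicting $x\in\partial W$; or $\partial W\cap H$ is a hyperplane of $H$ through $x$, in which case a local Euclidean picture shows that the two components of $U\setminus\partial W$ lie on opposite sides of $\partial W$ in $\XQ$, so $U\subseteq W$ is impossible. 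Hence every such $W$ equals $Z$, which gives $T_1\cap B=Z\cap B$ for $B$ small enough, and symmetrically $T_2\cap B=Z\cap B$. Therefore $T_1^\circ\cap T_2^\circ\supseteq Z^\circ\cap B\ne\emptyset$, contradicting the partial tessellation property of $\te$.

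The main obstacle is the Euclidean reduction implicit in the last step, namely the claim that two components of a small $H$-neighborhood of $x$ cut by $\partial W$ lie on opposite sides of $\partial W$ in $\XQ$. As in the proof of Lemma~\ref{IntersectionTwoL}, this is immediate in $\R^n$ and reduces to the Euclidean case in the hyperbolic setting via the Klein model and in the spherical setting via stereographic projection.
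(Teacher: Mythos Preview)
Your argument is correct, but it takes a longer route than the paper for the disjointness step. The paper's proof simply observes that if $(T_1\cap H)^r\cap(T_2\cap H)^r\ne\emptyset$ then $T_1\cap T_2$ contains a nonempty open subset of $H$, hence has codimension~$1$; then Lemma~\ref{SidePropertiesL} applies directly and forces $T_1$ and $T_2$ to lie in \emph{different} closed half-spaces determined by $H$, contradicting $T_1,T_2\subseteq Z$. That is the whole argument.

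What you do instead is essentially re-prove, by hand, the relevant consequence of Lemma~\ref{SidePropertiesL}: your analysis showing that the only essential hyperplane of $T_1$ through $x$ is $H$, and hence $T_1\cap B=Z\cap B$ locally, is exactly the content of parts~(1)--(2) of that lemma specialized to this situation. Your Euclidean reduction for the claim that $U$ meets both sides of $\partial W$ is fine, but unnecessary once you cite Lemma~\ref{SidePropertiesL}. On the plus side, you spell out explicitly why each $T\cap H$ is a thick polyhedron of $H$ (writing it as an intersection of half-spaces of $H$ with locally finite boundaries), a point the paper leaves implicit in the phrase ``essential half-space''.
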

\begin{proof}
Let  $T_1$ and $T_2$  be different elements of $\te_Z$.
If $(T_1\cap H)^r \cap (T_2\cap H)^r \neq \emptyset$ then $T_1\cap T_2$ is a side of $\te$   and $H$ is the subspace generated by this side.
Then, by  Lemma~\ref{SidePropertiesL}, $T_1$ and $T_2$ are in different closed half-spaces defined by $H$, a contradiction.
This proves that $\te_Z$ is a partial tessellation. As $\te$ is locally finite, so is $\te_Z$.
\end{proof}

\begin{proposition} \label{BoundaryUnionSides}
If $T$ is a tile of a locally finite tessellation $\te$ then $\partial T$ is the union of the sides of $T$ in $\te$.
\end{proposition}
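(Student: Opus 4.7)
The plan is as follows. By Proposition~\ref{SideIncludedTwoTilesP} every side of $T$ in $\te$ is an intersection $T\cap T'$ with $T'\neq T$ and hence is contained in $\partial T$; only the reverse inclusion $\partial T\subseteq\bigcup(\text{sides of }T)$ is non-trivial. By Lemma~\ref{EssentialFacesL} we have $\partial T=\bigcup_H (T\cap H)$ as $H$ runs over the essential hyperplanes of $T$, and by Lemma~\ref{EssentialBoundaryL} each $T\cap H$ is thick in $H$, so Lemma~\ref{InteriorLemmaL} makes $(T\cap H)^r$ dense in $T\cap H$. Because $\te$ is locally finite, only finitely many sides of $T$ meet any neighborhood, so the union of sides of $T$ is a locally finite union of closed polyhedra and therefore closed. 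It is thus enough to show that every $y\in(T\cap H)^r$ lies in some side of $T$.

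Fix such a $y$ and let $Z$ be the essential closed half-space of $T$ with $\partial Z=H$, and $Z^-$ the opposite closed half-space. A preliminary observation: $y$ lies on no essential hyperplane of $T$ other than $H$, since for any other essential hyperplane $H'$ with associated half-space $W\supseteq T$, the inclusion $T\cap H\subseteq H\cap W$ would make $H\cap W$ a proper half-space of $H$ with boundary $H\cap H'\ni y$, contradicting $y\in(T\cap H)^r$. Using local finiteness of $\te$ and of the essential hyperplanes of $T$, together with closedness of tiles not through $y$, I would pick an open ball $B$ about $y$ such that (a) $B$ meets only the tiles $T=T_1,T_2,\dots,T_m$ containing $y$, (b) $B$ avoids every essential hyperplane of $T$ other than $H$, and (c) $B\cap H\subseteq T$. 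Property (b) combined with the preliminary observation places $B$ in the interior of every essential half-space of $T$ other than $Z$, so by Lemma~\ref{EssentialFacesL} one has $T\cap B=Z\cap B$. Since $T_j^\circ\cap T^\circ=\emptyset$ for $j\ge 2$ and $T_j=\overline{T_j^\circ}$, a limit argument then yields $T_j\cap B\subseteq Z^-$.

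The conclusion is a dimension count. For any cell $C'$ of $\te$ with $C'^r\cap B\cap H\neq\emptyset$, picking $y'$ in that intersection gives $y'\in T$ by (c), so $C'\subseteq T$ by the defining property of a cell; moreover every tile through $y'$ is among $T_1,\dots,T_m$, whence $C'\cap B\subseteq Z\cap Z^-\cap B = H\cap B$, forcing $C'\subseteq H$ and $\dim C'\le n-1$. Only finitely many cells meet $B$, and their relative interiors partition $B$; if in $B\cap H$ all these relative interiors had dimension $<n-1$, their union would be nowhere dense in the $(n-1)$-dimensional subspace $H$, contradicting that $B\cap H$ is open and non-empty in $H$. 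Hence some such $C'$ has $\dim C'=n-1$, and being a codimension-one cell contained in $T$ it is a side of $T$. The union of these top-dimensional cells is closed and contains an open dense subset of $B\cap H$, so it equals $B\cap H\ni y$, placing $y$ in a side of $T$. The main obstacle, I expect, is pinning down the local opposite-side separation $T_j\cap B\subseteq Z^-$: without it the cells meeting $B\cap H$ would not be confined to $H$ and the dimension count would collapse.
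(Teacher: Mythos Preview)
Your proof is correct, but it takes a noticeably longer route than the paper's argument. A small implicit step worth making explicit: when you write ``every tile through $y'$ is among $T_1,\dots,T_m$, whence $C'\cap B\subseteq Z\cap Z^-\cap B$'', you are tacitly using that at least one $T_j$ with $j\ge 2$ contains $y'$. This follows because $y'\in T\cap H\subseteq\partial T$ and $\te$ is a tessellation (so $\partial T\subseteq\bigcup_{R\ne T}R$), but you should say so; otherwise the containment $C'\cap B\subseteq Z^-$ has no visible source.

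The paper's proof is considerably more direct and avoids the reduction to relative-interior points, the separation $T_j\cap B\subseteq Z^-$, and the cell partition from Lemma~\ref{CellGeneratedL} entirely. It works straight at an arbitrary $x\in\partial T$: pick a neighborhood $U$ meeting only the tiles $T,T_1,\dots,T_k$ that contain $x$, pick an essential hyperplane $H$ of $T$ through $x$, and observe that $U\cap(T\cap H)^r$ is a non-empty open subset of $H$ contained in $\partial T$, hence in $\bigcup_i T\cap T_i$. Since finitely many of the $T\cap T_i\cap H$ cover an open subset of $H$, one of them is thick in $H$; by Lemma~\ref{SidePropertiesL} that $T\cap T_i$ is a side, and $x$ lies in it. Your approach trades this single covering/thickness step for an appeal to the cell decomposition and a dimension count. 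That works, but it imports more machinery (in particular Lemma~\ref{CellGeneratedL}) than is needed, and requires the extra closure/density manoeuvre at the end to actually place $y$ in a side.
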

\begin{proof}
Let $x\in \partial T$ and let $T_1, \ldots , T_k$ be the tiles containing $x$ and that are different from $T$.
Let $U$ be an open neighborhood of $x$ such that $T,T_1, \ldots , T_k$ are the only tiles intersecting  $U$.
By Lemma~\ref{EssentialFacesL}, there exists an essential hyperplane $H$  of $T$ in $\XQ$ such that $x\in H$.
Then $H\cap T$ is a thick  polyhedron of $H$.
Therefore, by Lemma~\ref{InteriorLemmaL}, $U\cap ( H\cap T)^{r}$ is a non-empty open subset of $H$.
Because $\te$ is a tessellation and $T\cap H \subseteq \partial T$, we get that $U\cap T \cap H \subseteq \cup_{i=1}^{k}
T\cap T_{i}$. So $U\cap ( H\cap T)^{r}$ is non-empty open subset of $H$ contained in  $\cup_{i=1}^{k}  T\cap T_{i}\cap H$.
Hence, $T\cap T_{i}\cap H$ is thick in $H$ for some $i$. So, by Lemma~\ref{SidePropertiesL}, $T\cap T_i$ is a side of $T$.
Therefore $x$ belongs to a side of $T$.
This proves one of the inclusions of the statement. The other one is obvious.
\end{proof}

\begin{proposition}\label{EdgesContainedTwoSidesP}
Let $T$ be a tile of a locally finite tessellation $\te$  of $\XQ$. If  $E$ is an edge of $T$ in $\te$ then $E$ is contained in exactly two sides of $T$ in $\te$.
\end{proposition}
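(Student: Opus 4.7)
The plan is to reduce the problem to a local two-dimensional picture via a transverse slice at a relative interior point of $E$. Fix $x\in E^r$. By Lemma~\ref{CellGeneratedL}, $E$ is the cell of $\te$ generated by $x$, so
$$E = T\cap T_1 \cap \cdots \cap T_k,$$
where $T,T_1,\ldots,T_k$ are exactly the tiles of $\te$ containing $x$. Any side of $T$ that contains $E$ contains $x$, so by Proposition~\ref{SideIncludedTwoTilesP} it is of the form $T\cap T_i$ for some $i$; conversely, whenever $T\cap T_i$ has codimension $1$ it is a side by Lemma~\ref{SidePropertiesL}. Because $E\subseteq T\cap T_i\subseteq \partial T$, each such intersection has codimension $1$ or $2$. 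Thus the proof reduces to showing that exactly two of $T\cap T_1,\ldots,T\cap T_k$ have codimension $1$.

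Let $L$ be the codimension-$2$ subspace generated by $E$. A preliminary step is: every essential hyperplane $H$ of $T$ passing through $x$ contains $L$. Indeed, if $Z$ is the closed half-space with $\partial Z=H$ and $T\subseteq Z$, then $E\subseteq Z$, and since $x\in E^r$ there is a full $L$-neighborhood of $x$ inside $E\subseteq Z$; combined with $x\in \partial Z$ and $L$ being geodesically complete, this forces $L\subseteq H$.

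Next I would choose a $2$-dimensional subspace $D$ of $\XQ$ through $x$ with $D\cap L=\{x\}$ locally (taking $D$ to avoid the antipode of $x$ in the spherical case) and analyze the tessellation inside $D$ near $x$. Each tile is convex as an intersection of closed half-spaces, so each $T_i\cap D$ is convex in $D$; locally near $x$, since $x\in \partial T_i$, this intersection is a convex sector with vertex $x$, and the sectors $\{T\cap D,\, T_1\cap D,\ldots,\,T_k\cap D\}$ cover a neighborhood of $x$ in $D$. The sector $T\cap D$ is not the whole neighborhood (else $x\in T^\circ$, contradicting $x\in E^r\subseteq \partial T$), so its boundary in $D$ consists of exactly two rays $r_1, r_2$ emanating from $x$. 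By a local application of Proposition~\ref{SideIncludedTwoTilesP}, each $r_j$ is shared with a unique adjacent sector $T_{i_j}\cap D$; extending by the $L$-direction then gives that $T\cap T_{i_j}$ has codimension $1$. For any remaining tile $T_i$, convexity forces $T\cap D$ and $T_i\cap D$ to meet only at $x$, so $T\cap T_i$ agrees with $L$ locally and therefore has codimension $2$.

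The main obstacle is making this transverse-slice argument rigorous: one must verify that intersecting with $D$ preserves the relevant convexity and thickness properties, that Proposition~\ref{SideIncludedTwoTilesP} really transfers to the two-dimensional local picture (so that codimension-$1$ adjacencies in $D$ give codimension-$1$ intersections in $\XQ$), and that degenerate cases (such as $T\cap D$ being a half-plane, or two of the candidate adjacent tiles being confused) do not disrupt the count. An alternative, more algebraic route would apply Lemma~\ref{IntersectionTwoL} to triples of essential half-spaces of $T$ whose boundaries contain $L$, but the possibility of several sides of $T$ lying inside the same essential hyperplane makes the bookkeeping less transparent than the geometric argument above.
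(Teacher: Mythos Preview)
Your reduction in the first paragraph is correct and clean: the sides of $T$ containing $E$ are exactly those $T\cap T_i$ of codimension~$1$, and each $T\cap T_i$ has codimension $1$ or $2$. Your preliminary step (essential hyperplanes of $T$ through $x$ contain $L$) is also correct, and the same argument applies verbatim to every $T_j$, which is what you actually need.

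However, the proof is genuinely incomplete, as you yourself flag. The transverse-slice argument can be made rigorous, but the missing ingredient is a \emph{local product structure}: inside a small ball $B$ around $x$ meeting only $T,T_1,\dots,T_k$ and only essential hyperplanes through $x$, each such hyperplane contains $L$, so each $T_j\cap B$ is an intersection of half-spaces whose boundaries contain $L$. Choosing $D$ with $D+L=\XQ$ at $x$, one gets that $B$ is (up to an identification) a product $(D\cap B)\times(L\cap B)$ under which every $T_j\cap B$ has the form $(\text{sector in }D)\times(L\cap B)$. This is what guarantees that the sectors $T_j\cap D$ are thick in $D$, that they form an honest tessellation of $D\cap B$ to which Proposition~\ref{SideIncludedTwoTilesP} applies, and that codimension-$1$ adjacency in $D$ lifts to codimension-$1$ intersection in $\XQ$. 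Without spelling this out, steps such as ``each $T_i\cap D$ is a convex sector with vertex $x$'' and ``extending by the $L$-direction gives codimension~$1$'' are assertions rather than arguments.

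For comparison, the paper's proof follows exactly the route you call the ``alternative, more algebraic'' one. For \emph{at most two}, it assumes three sides $S_1,S_2,S_3$ of $T$ contain $E$ and applies Lemma~\ref{IntersectionTwoL} to the corresponding essential half-spaces; the case where two of the $S_i$ lie in the same essential hyperplane~$H$ (the bookkeeping you worry about) is dispatched via the induced partial tessellation $\te_{Z}$ of $H$ from Lemma~\ref{PartialTessellationL}. For \emph{at least two}, the paper does not slice: starting from one side $S=T\cap T_1$ containing $E$ (via Proposition~\ref{BoundaryUnionSides}), it shows there must exist a second essential hyperplane $H_1\neq H$ of $T$ or of $T_1$ meeting $E^r$, proves $E^r\subseteq H_1$, and then locates a second side of $T$ by a direct open-set argument near $x$. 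Your approach is more visual and, once the product structure is in place, arguably more transparent; the paper's approach avoids setting up the slice at the cost of the case analysis you anticipated.
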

\begin{proof} Let $E$ be an edge of $T$ in $\te$.
First we prove by contradiction that $E$ cannot be contained in three different sides of $T$ in $\te$.
So, assume that $E$ is contained in three distinct sides, say $S_{1}$, $S_{2}$ and  $S_3$ of $T$ in $\te$.
Because of Proposition~\ref{SideIncludedTwoTilesP}, $S_{i}=T\cap T_{i}$ with $T_i$ a tile different from $T$.
Let $H_i$ denote  the hyperplane generated by $S_i$.
Because of Lemma~\ref{SidePropertiesL}, each $H_i$ is an essential hyperplane of $T$.
Let $Z_i$ denote the closed half-space defined by $H_i$ such that $T\subseteq Z_i$.
By Lemma~\ref{SidePropertiesL}, $T_i \subseteq Z_i'$.
Furthermore, by Lemma~\ref{IntersectionTwoL},  we may assume that $Z_1\cap Z_2 \cap Z_3=Z_2\cap Z_3$.
As each $H_i$ is an essential hyperplane of $T$ we deduce that $H_1\in \{H_2,H_3\}$ and hence, we may assume that $H_1=H_2$.
Thus $Z_1=Z_2$.
If $H_3 =H_1$ then $Z_1=Z_3$ and $S_1 , S_2$ and $S_3$ are tiles of the partial tessellation $\te_{Z_{1}}$ of $H_{1}$ defined as in
Lemma~\ref{PartialTessellationL}.
Since  $E$ has codimension $1$ in $H_1$ and because it is contained in each $S_i$ we get that $S_i\cap S_j$ is a side of $\te_{Z_1}$ for every  $i\neq j$.
Consequently,  $S_1,S_2$ and $S_3$ are distinct tiles of $\te_{Z_1}$ containing points in the relative interior of $E$ and hence in the relative interior of a side of $\te_{Z_1}$.
This contradicts with Lemma~\ref{SidePropertiesL}.
So $H_3\neq H_1$ and thus  $H_1\cap H_3$ is the subspace generated by $E$. Moreover, $S_1 $ and $S_2$ are tiles of $\te_{Z_{1}}$ and
$S_1 \cap S_2$ is a side of $\te_{Z_{1}}$.
By Lemma~\ref{SidePropertiesL}, $S_1$ and $S_2$ are in different closed half-spaces of $H_1$ defined by
$H_1 \cap H_3$.
These closed half-spaces are $H_1 \cap Z_3$ and $H_1 \cap Z_3'$. By symmetry we also may assume that $S_2\subseteq Z_3'$.
Hence, $T\cap T_2 =S_2 \subseteq Z_1 \cap Z_3 \cap Z_1' \cap Z_3'\subseteq H_1 \cap H_3$,
in  contradiction with the fact that $S_2$ has codimension $1$.

It remains to prove that $E$ is contained in two different sides of $T$. Let $x\in E^r$.
Then $x\in \partial T$ and therefore $x\in S$ for some side $S$ of $T$ by Proposition~\ref{BoundaryUnionSides}.
Hence, by the definition of a cell, $E\subseteq S$. By Lemma~\ref{SidePropertiesL}, $S=T\cap T_1$ with $T_1$ a tile of $\te$ different from $T$.
Let $H$ be the hyperplane of $\XQ$ generated by $S$.
By Lemma~\ref{SidePropertiesL}, $H$ is an essential hyperplane of both $T$ and $T_1$.
Furthermore,   $T$ and $T_1$ are included in different closed half-spaces defined by $H$.
Note that, because of Lemma~\ref{EssentialFacesL},  a point $y\in S$ which is not in any essential hyperplane of $T$ or $T_1$ different from $H$ has a neighborhood contained in $T\cup T_1$.

We claim that there is a hyperplane $H_1$ of $\XQ$ different from $H$ such that $H_1$ is an essential hyperplane of either $T$ or $T_1$ and it intersects $E^r$ non-trivially. Indeed, for otherwise, for every $y\in E^r$ there is a neighborhood $U_y$ of $y$ in $\XQ$ such that $U_y \subseteq T\cup T_1$.
Then $\cup_{y\in E^r} U_{y}$ does not intersect any tile of $\te$ different from both $T$ and $T_1$.
In particular, the only tiles intersecting $E^r$ are $T$ and $T_1$.
So $T$ and $T_1$ are the only tiles containing $E$ and hence $E=S$, a contradiction. This proves the claim.

So let $H_1$ be a hyperplane different from $H$ such that $E^r\cap H_1\ne \emptyset$ and $H_1$ is essential hyperplane of either $T$ or $T_1$.
We claim that $E^r\subseteq H_1$. Otherwise $E$ has positive dimension and the subspace $L$ generated by $E$ is not contained in $H_1$.
Hence $H_1\cap L$ is a hyperplane of $L$, since $\emptyset\ne L\cap H_1\ne L $.
As $E^r$ is an open subset of $L$ of dimension at least 1, it has points in the two open half-spaces of $L$ defined by $H_1\cap L$.
This implies that $E^r$ has points in the two open half-spaces of $\XQ$ defined by $H_1$. This contradicts with the facts that $E^r\subseteq T\cap T_1$ and either $T$ or $T_1$ is contained in one closed half-space defined by $H_1$, because it is an essential hyperplane of either $T$ or $T_1$.
So we have proved that $E^r\subseteq H_1$.
Lemma~\ref{IntersectionTwoL} therefore yields that if $H_1$ is essential in $T$ (respectively, $T_1$) then $H$ and $H_1$ are the only essential hyperplanes of $T$ (respectively, $T_1$) intersecting $E^r$ non-trivially.

Let $x\in E^r$ and let $T,T_1,R_1,\dots,R_k$ be the different tiles of $\te$ containing $x$.
Then $E\subseteq T\cap T_1 \cap (\cap_{i=1}^k R_i )$, by the definition of a cell.

Assume that $H_1$ is an essential hyperplane of $T$.
Consider an open ball centered at $x$ such that $U\cap R=\emptyset$ for every tile $R\not\in \{T,T_1,R_1,\dots,R_k\}$ and $U\cap H_2=\emptyset$ for every essential hyperplane $H_2$ of $T$ with $H_2\not\in  \{ H,H_1\}$.
Let $V$ be the open half-space of $\XQ$ defined by $H$ and containing $T^{\circ}$.
Then $V\cap U\cap H_1$ is a non-empty open subset of $H_1$ contained in the boundary of $T$ and hence it is also contained in $\cup_{i=1}^k T\cap R_i$, because $V\cap T_1=\emptyset$.
Thus $T\cap R_i$ has codimension 1 for some $i$ and hence it is a side of $T$ containing $E$.

Finally assume that $H_1$ is an essential hyperplane of $T_1$ and not of $T$.
In this case we consider an open ball $U$ in $\XQ$ with center $x$ such that $U\cap R=\emptyset$ for every tile $R\not\in \{R_1,\dots,R_k\}$ and $U\cap H_2=\emptyset$ for every essential hyperplane $H_2$ of $T$ with $H_2\ne H$.
Moreover, let $Z_1$ be the open half-space defined by $H_1$ not containing $T_1$.
Then $Z_1\cap U \cap H$ is a non-empty open subset of $H$ contained in the boundary of $T$.
Hence $Z_1\cap U \cap H\subseteq \cup_{i=1}^k T\cap R_i$ and therefore $T\cap R_i$ is a side of $T$ containing $E$, for some $i$.

In both cases $E$ is contained in two different sides of $T$ containing $E$, namely $S$ and $T\cap R_i$, as desired.
\end{proof}

Note that if the dimension of $\XQ$ is at least 3 then an edge of a tile $T$ in $\te$ is not necessarily the intersection of two sides of $T$, although it is contained in exactly two distinct sides.
Moreover, even if an edge $E$ is the intersection of two sides of a tile, it could be properly contained in the intersection of two sides of another tile (see Figure~\ref{IntersectionSidesNotEdge}).

\begin{figure}
\begin{center}
\begin{tikzpicture}[scale=0.7]
\draw[-] (0,0) -- (6,0);
\draw[dashed] (-2,1) -- (4,1);
\draw[dashed] (-4,2) -- (2,2);
\draw[-] (0,0) -- (-4,2);
\draw[dashed] (3,0) -- (1,1);
\draw[dashed] (6,0) -- (2,2);

\draw[-] (0,3) -- (6,3);
\draw[red,line width=2.5pt] (-2,4) -- (1,4);
\draw[blue,line width=2.5pt,dashed] (1,4) -- (4,4);
\draw[dashed] (-4,5) -- (2,5);
\draw[-] (0,3) -- (-4,5);
\draw[dashed] (3,3) -- (1,4);
\draw[dashed] (6,3) -- (2,5);

\draw[-] (0,6) -- (6,6);
\draw[-] (-2,7) -- (4,7);
\draw[-] (-4,8) -- (2,8);
\draw[-] (0,6) -- (-4,8);
\draw[-] (6,6) -- (2,8);

\draw[-] (0,0) -- (0,6);
\draw[-] (3,0) -- (3,3);
\draw[-] (6,0) -- (6,6);

\draw[-] (-2,1) -- (-2,7);
\draw[dashed] (1,1) -- (1,4);
\draw[dashed] (4,1) -- (4,7);

\draw[-] (-4,2) -- (-4,8);
\draw[dashed] (2,2) -- (2,8);

\draw[-] (-2,4) -- (0,6);
\draw[dashed] (4,4) -- (6,6);

\draw (-1,1.5) node {$A$};
\draw (5,2) node {$B$};
\draw (-0.5,4.5) node {$C$};
\draw (-1.5,6) node {$D$};
\draw (-3,3) node {$E$};
\draw (-3,6) node {$F$};
\end{tikzpicture}
\caption{\label{IntersectionSidesNotEdge}
The intersection of the sides  $C\cap D$ and $D\cap F$ of $D$ is the union of the edges $A\cap D$ (red, fat and continuous) and $B\cap D$ (blue, fat and dashed).
None of these two edges is the intersection of two sides of $D$ while both are the intersection of two sides of $C$.}
\end{center}
\end{figure}
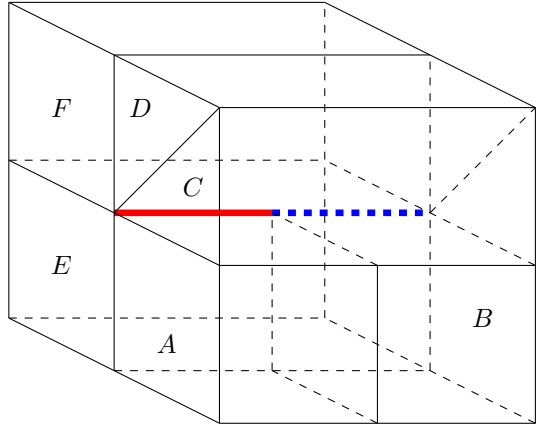

Let $E$, $S$ and $T$ be respectively an edge, a side and a tile of $\te$ with $E\subset S \subset T$.
We define recursively two sequences, one of tiles $(T_0,T_1,\dots)$ and another of sides $(S_1,S_2,\dots)$ by setting
\begin{align}\label{EdgeLoopD}
\begin{split}
T_0&=T,  \\
S_1&=S, \\
T_i &= \text{tile containing } S_i \text{ and different from } T_{i-1}, \text{ and}\\
S_{i+1} &= \text{side of } T_i  \text{ containing } E \text{ and different from } S_i. 
\end{split}
\end{align}
This is well defined by Proposition~\ref{SideIncludedTwoTilesP} and Proposition~\ref{EdgesContainedTwoSidesP} and we have for $i\ge 1$:
	\begin{center}
	 $T_{i-1}$ and $T_i$ are the only tiles containing $S_i$ and \\
	$S_i$ and $S_{i+1}$ are the only sides of $T_i$ containing $E$.
	\end{center}

For example, assume that Figure~\ref{IntersectionSidesNotEdge} represents part of a tessellation of $\R^3$ and take $T=A$, $S=A\cap C$ and $E=A\cap C$, the  red, fat, continuous  segment. Then, the sequence of tiles is periodic of period 5 starting with $(A,C,D,F,E)$.
If $T=B$, $S=B\cap C$ and $E$ is the blue, fat, dashed segment  then again the sequence of tiles is periodic of period 5 starting with $(B,C,D,F,E)$.
If one considers the edge $E=A\cap B \cap C$ then, with an appropriate side and tile selection, we obtain a sequence of tiles of period 3 starting with $(A,B,C)$.
We will show that this  behaviour is general.

As every edge is contained in finitely many tiles, the sequences only have finitely many different elements.
Moreover, if $k$ is minimum such that $T_k=T_m$ for some $m>k$ then $k=0$.
Indeed, $S_k$ and $S_{k+1}$ are the only sides of $T_k$ containing $E$ and the same happens for $S_m$ and $S_{m+1}$.
Therefore either $S_k=S_m$ and $S_{k+1}=S_{m+1}$ or $S_k=S_{m+1}$ and $S_{k+1}=S_m$.
In the former case $T_{m-1}$ contains $S_m=S_k=T_{k-1}\cap T_k$ and it is different from $T_m=T_k$.
Therefore, if $k\ne 0$ then $T_{m-1}=T_{k-1}$.
In the latter case, if $k\ne 0$ then $T_{m+1}$ contains $S_{m+1}=S_k=T_{k-1}\cap T_k$ and it is different form $T_m=T_k$.
Hence, $T_{m+1}=T_{k-1}$.
In both cases we obtain a contradiction with the minimality of $k$.

Let $m$ be the minimal positive integer with $T_0=T_m$, then
   \begin{eqnarray} \label{EdgeLoopEq}(T_0,T_1,\dots,T_m)&&
   \end{eqnarray}
is called an  edge loop of $E$.
This definition depends on the choice of the tile $T$ and the side $S$.
However, if $S$ is replaced by another side $S'$ containing $E$ and included in $T$ (there is only one option by Proposition~\ref{EdgesContainedTwoSidesP}) then the edge loop obtained is $(T_m,T_{m-1},\dots,T_2,T_1,T_0)$.
If we replace $T$ by one of the tiles $T_i$ and $S$ by one of the sided  of $T_i$ containing $E$, i.e. either $S_{i+1}$ or $S_i$ then the edge loop is either $(T_i,T_{i+1},\dots,T_m,T_1,\dots,T_{i-1},T_i)$ or $(T_i,T_{i-1},\dots,T_1,T_m,\dots,T_{i+1},T_i)$.
The next lemma shows that there are no other alternatives.

\begin{lemma}\label{EdgeLoopAllTilesL}
 If $E$ is an edge and $(T_0,T_1,\dots,T_m)$ is an edge loop of $E$ then $T_1\dots,T_m$ are precisely the tiles containing $E$.
\end{lemma}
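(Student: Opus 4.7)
The plan is to pick a point $x$ in the relative interior $E^r$ and analyze the local geometry of $\te$ near $x$ through a $2$-dimensional transverse disk. The tiles containing $E$ will correspond to convex cones tiling this disk around $x$, and the edge loop will trace a cyclic walk through these cones. Since going once around $x$ in the disk visits every cone, the edge loop must visit every tile containing $E$.

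The first step is to pick $x\in E^r$ \emph{generically}. By local finiteness only finitely many tiles contain $E$, and each has finitely many essential hyperplanes. If an essential hyperplane $H$ of such a tile does not contain the subspace $L$ generated by $E$, then $H\cap E^r$ is contained in a proper subspace of $L$, hence it is lower-dimensional in $E$. I would choose $x$ avoiding the finite union of these thin subsets, so that every essential hyperplane of a tile containing $E$ which passes through $x$ actually contains $L$, and in particular contains $E$. By Lemma~\ref{CellGeneratedL}, the tiles containing $x$ are then exactly the tiles containing $E$.

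Next I would fix a small open ball $B$ around $x$ meeting only tiles containing $x$ and only essential hyperplanes of these tiles that pass through $x$, and pick a $2$-dimensional subspace $P$ through $x$ which is transverse to $L$ and, generically, not contained in any relevant hyperplane. Set $D=P\cap B$. In local coordinates with $x=0$, $L=\{y_1=y_2=0\}$ and $P=\{y_3=\cdots=y_n=0\}$, every essential hyperplane through $x$ of any tile $T'\ni x$ has the form $\{a_1y_1+a_2y_2=0\}$ with $(a_1,a_2)\ne 0$, and cuts $P$ along a line through $x$. Consequently $T'\cap D$ is a closed convex cone $C_{T'}\subseteq P$ with apex $x$. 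Thickness of $T'$ forces $C_{T'}$ to be $2$-dimensional, and the disjointness of tile interiors together with the covering property of $\te$ guarantees that the family $\{C_{T'}: T'\ni x\}$ is a tessellation of $D$ by convex cones with common apex $x$.

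Finally, any tessellation of a $2$-disk by convex cones with common apex is combinatorially a pie-slice arrangement in which each cone is bounded by exactly two rays. Using Proposition~\ref{EdgesContainedTwoSidesP}, these two rays correspond to the two sides of $T'$ containing $E$, and two cones $C_{T'}$, $C_{T''}$ are adjacent in $D$ precisely when $T'\cap T''$ is a side of $\te$ containing $E$. The edge loop $(T_0,T_1,\dots,T_m)$ from \eqref{EdgeLoopEq} is exactly the cyclic sequence produced by walking from cone to adjacent cone around $x$ in $P$, and a cyclic walk around $x$ in a $2$-disk visits every cone once before returning, so the loop exhausts all tiles containing $E$. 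The main technical obstacle is the local analysis itself, in particular verifying that each $C_{T'}$ has non-empty $2$-dimensional interior and that cone adjacency in $P$ genuinely corresponds to side sharing in $\te$; the generic choices of $x$ and $P$ are essential for this clean picture.
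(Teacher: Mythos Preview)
Your approach is sound and is the classical ``transverse slice'' argument: reduce to a two-dimensional picture where the tiles containing $E$ become sectors of a disk around $x$, and then use that a cyclic walk through adjacent sectors must exhaust them all. The generic choices you make are the right ones, and the coordinate calculation showing that every relevant essential hyperplane has the form $\{a_1y_1+a_2y_2=0\}$ (hence meets the transverse $2$-plane in a line through $x$) is exactly what makes the cone picture work. One small remark: once the $2$-plane is transverse to $L$ it is automatically not contained in any hyperplane $H\supseteq L$, so no extra genericity is needed there; and you should note (e.g.\ via the Klein model or a normal-coordinate chart) why the linear-coordinate description is legitimate in the non-Euclidean cases.

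The paper argues quite differently. It does \emph{not} build a transverse slice. Instead it takes any $z\in E^r$, lets $T_0,\dots,T_n$ be all tiles containing $z$, and splits a small ball $B$ about $z$ as $B=B_1\cup B_2$ with $B_1=\bigcup_{i\le m}B\cap T_i$ and $B_2=\bigcup_{i>m}B\cap T_i$. If $n>m$, both pieces are nonempty and closed; if every $T_i\cap T_j$ with $i\le m<j$ had codimension $\ge 2$, then removing $B_1\cap B_2$ would leave $B$ path-connected by Lemma~\ref{ConnectedCod2L} while expressing it as a disjoint union of two nonempty closed sets, a contradiction. Hence some $T_i\cap T_j$ with $i\le m<j$ is a side containing $E$, giving $T_i$ a third side through $E$ and contradicting Proposition~\ref{EdgesContainedTwoSidesP}. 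This is shorter and stays entirely within the tools already set up (in particular it recycles Lemma~\ref{ConnectedCod2L}); your argument trades that economy for a more explicit geometric picture, at the cost of the slice construction and the local-model justification.
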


\begin{proof}
Note that, as stated before, there are only finitely many tiles containing $E$.
We can order them in such a way that $T_0, \ldots , T_n$ are all these tiles and  $T_0, \ldots T_m$ are the tiles forming the edge loop.
Let $z \in E^r$. Then $z \in T_i$ for every $0 \leq i \leq n$ and there exists $\lambda >0$ such that the ball $B=B(z, \lambda)$ intersects a tile $T$ if and only $T=T_i$ for for some $0 \leq i \leq n$.
Moreover
$$B = B_1 \cup B_2$$
with
 $$B_1  =  \bigcup_{i=0}^m B \cap T_i \quad \text{ and } \quad B_2= \bigcup_{i=m+1}^n B \cap T_i .$$

We prove the result by contradiction. So suppose that $n>m$.
Hence, both $B_1$ and $B_2$ are non-empty closed sets.
Moreover $B_1 \cap B_2
\subseteq \bigcup_{0 \leq i \leq m, m+1 \leq j \leq n} \left(T_i \cap T_j\right)$.
We claim that $T_i \cap T_j$ is of codimension $1$ for at least one $0 \leq i \leq m$ and one $m+1 \leq j \leq n$.
Otherwise set $B_1' = B_1 \setminus \left(
B_1 \cap B_2 \right)$, $B_2' = B_2 \setminus \left(
B_1 \cap B_2 \right)$ and  $B'=B \setminus
\left(B_1 \cap B_2 \right)  = B_1' \cup B_2'$. Clearly, $B_1'$ and $B_2'$ are disjoint. Moreover, $T_i$ is thick and
$T_i^0$ is dense in $T_i$. Hence $B\cap T_i$ contains an open subset of $\XQ$.
Thus $\dim B_i=\dim \XQ$. However $\dim (T_i\cap T_j) < \dim \XQ$ for each $i\ne j$ and hence $\dim B_1\cap B_2<\dim \XQ$.
Thus $B'_i\ne \emptyset$.
Moreover, as $T_i \cap T_j$ has codimension at least $2$ for
every $ 0 \leq i \leq n$ and $m+1 \leq j \leq n$, by Lemma~\ref{ConnectedCod2L},  $B'$ is path-connected and hence
connected. Thus, $B'$ is
a connected subspace as the intersection of two disjoint closed subspaces, which is a
contradiction. Hence there exists $0 \leq i \leq m$ and $m+1 \leq j \leq n$ such that $T_{i} \cap T_{j}$ is of
codimension $1$, and hence it is a side containing $E$ by Proposition~\ref{SideIncludedTwoTilesP}.
Denote this side by $S^*$.
By the definition of an edge loop, $S_{i}= T_{i-1} \cap T_{i}$ and $S_{i+1} = T_{i} \cap T_{i+1}$ are two different sides
contained in $T_{i}$ and containing $E$ (indices are interpreted modulo $m$.) Moreover $T_{i-1} \neq T_{j}$ and $T_{i+1} \neq T_{j}$ and hence $E$ is
contained in three different sides, which contradicts Lemma~\ref{EdgesContainedTwoSidesP}.
\end{proof}

%
%
%

\section{Group presentations}
\label{SectionGroupPresentations}
\begin{definition}
Denote by $\Isom (\XQ)$ the group of isometries of $\XQ$.
A subgroup $G$ of $\Isom(\XQ)$ is said to be \emph{discontinuous} if for every compact subset $K$ of $\XQ$ there are only finitely many $g\in G$ with $g(K)\cap K \neq \emptyset$.
Using that $\XQ$ is separable it is easy to see that every discontinuous subgroup of $\Isom(\XQ)$ is countable.

Let  $G$ be a discontinuous group of isometries of $\XQ$.
A \emph{fundamental polyhedron} of  $G$ is a polyhedron $P$  of $\XQ$ such that $\te_P =\{ \gamma (P) \; : \; \gamma \in G\}$ is a tessellation of $\XQ$.
The polyhedron  $P$ is said to be  \emph{locally finite} if $\te_P$ is locally finite.
\end{definition}

Throughout this section $G$ is a discontinuous subgroup of the group of isometries of $\XQ$.
The action of $G$ on $\XQ$ induces a dimension preserving action on the set of subspaces of $\XQ$.
In particular, $G$ acts on the set of polyhedra of $\XQ$.

Observe that if $P$ is a fundamental polyhedron of $G$ then
\begin{enumerate}
\item $\XQ=\cup_{\gamma\in G} \gamma(P)$ and
\item $P^\circ\cap \gamma(P)^\circ=\emptyset$ for every $1\ne \gamma\in G$.
\end{enumerate}
Conversely, assume that $P$ satisfies (1) and (2). Then $\alpha(P)^\circ\cap \beta(P)^\circ=\emptyset$ for any  distinct  elements $\alpha$ and $\beta$ of $G$.
Moreover, if $P$ were not thick in $\XQ$ then it follows from Lemma~\ref{InteriorLemmaL} that it is contained in a hyperplane. As $G$ is countable, we get that   $\cup_{\gamma \in G} \gamma (P)$ is contained in a countable union of hyperplanes, in contradiction with Lemma~\ref{ConnectedCod2L}.
Hence $P$ is thick and hence $\gamma(P)$ is thick for every $\gamma\in G$.
Thus $P$ is a fundamental polyhedron of $G$ if and only if (1) and (2) hold.

In this section, we show in Theorem~\ref{PoincareT} that a presentation of a group may be established based on the tessellation given by its fundamental domain. The main idea to get a generating set is the following. Given a non-trivial element $g \in G$, one considers a path from a point inside the fundamental domain $P$ of $G$ to a point inside $g(P)$. The path can be chosen such that it intersects only intersections of images of $P$ of codimension $1$. Every such intersection corresponds to an element of $G$ called a \textit{side-pairing transformation} and it may then be shown that $g$ can be written as a product of those side-pairing transformations. Theorem~\ref{PoincareT} also gives the relations between the different generators. We will give more details about this later.

Throughout this section
 \begin{center}
  $P$ is a locally finite fundamental polyhedron of $G$ and $\te=\te_P$.
 \end{center}
When we refer to cells, tiles, sides or edges it is always with respect to $\te$.
Since every cell is contained in only finitely many tiles, the stabilizer of one cell is finite.
If $S$ is a side of $P$ then, by Proposition~\ref{SideIncludedTwoTilesP}, there is a unique $g\in G\setminus \{1\}$ such that $S=P\cap g(P)$ and $S^r \cap h(P)=\emptyset$ for every $h\in G\setminus \{1,g\}$.
We denote this $g$ as $\gamma_S$.
It is called a \emph{(side) pairing transformation}.
So,
  $$S=P\cap \gamma_{S}(P).$$
If $g$ is a pairing transformation then $P\cap g^{-1}(P)=g^{-1}(P\cap g(P))$ also is a side of $P$ and hence $g\inv$ is a pairing transformation as well.
In this case one denotes $S_g=P\cap g(P)$ and one says then that $S_g$ and $S_{g^{-1}}$ are paired sides.
If $S$ is a side then the side paired with $S$ is denoted $S'$. So,
    $$S'=\gamma_S\inv(S).$$
If $E$ is an edge of $P$ then, by Proposition~\ref{EdgesContainedTwoSidesP},  it is contained in exactly two sides, say $S_g$ and $S_{g_1}$.
Because $G$ permutes edges,  $g^{-1}(E)$ is an edge of the tessellation  and it is contained in $S_{g\inv}$.
Therefore $g\inv(E)$ and $g_1\inv(E)$ are edges of $P$.

Some relations amongst the side pairing transformations can be deduced.
A first type of relations is easily obtained. Indeed, if   $S_1$ and $S_2$ are two paired sides  then $\gamma_{S_1}=\gamma_{S_2}\inv$.
Such a  relation is called a {\it pairing relation}.
In case $S$ is a side paired with itself then the pairing relation takes the form $\gamma_{S}^{2}=1$ and such a relation usually is called a {\it reflection relation}.

To define the second type of relations, we introduce the following definition.

\begin{definition}
A \emph{loop} of $G$ with respect to $P$ (or simply a loop of $G$, if the polyhedron is clear from the context) is a finite ordered list $( g_{0}, g_{1}, \; \ldots ,\; g_{n})$ of elements of $G$ such that $g_0  = g_{n} $ and
$g_{i-1}(P) \cap g_{i}(P)$ is a side for each $1\leq i \leq n$ (equivalently, each $g_{i-1}^{\inv} g_i $ is a  pairing transformation).
\end{definition}

\begin{example}\label{LoopsEx}{\rm
\begin{enumerate}
\item If $g,h\in G$ and $g\inv h$ is a pairing transformation then $(g,h,g)$ is a loop of $G$.
\item If $(T_0=g_0(P),T_1=g_1(P),\dots,T_m=g_m(P))$ is an edge loop then $(g_0,g_1,\dots,g_m)$ is a loop of $G$.
\item If $\gamma_{S_1}\cdots \gamma_{S_m}=1$ for sides $S_1,\dots,S_n$ of $P$ then
	$$(1,\gamma_{S_1},\gamma_{S_1}\gamma_{S_2},\dots,\gamma_{S_1}\cdots \gamma_{S_{n-1}},\gamma_{S_1}\cdots \gamma_{S_m}=1)$$
is a loop.
Conversely, if $(g_0,g_1,\dots,g_m)$ is a loop of $G$ then  $S_i=P\cap g_{i-1}\inv g_i(P)$ is a side of $P$ and $g_{i-1}\inv g_i=\gamma_{S_i}$ for every $i\in \{ 1,\dots,m\}$. Furthermore, $\gamma_{S_1}\cdots \gamma_{S_m}=1$ .
\end{enumerate}
}\end{example}

By Example~\ref{LoopsEx}.(3), relations amongst pairing transformations are completely determined by loops.
In case a loop $(g_0, \ldots , g_m)$ is determined by an edge loop, as in Example~\ref{LoopsEx}.(2), then the resulting relation $\gamma_{S_{1}} \cdots \gamma_{S_{m}} =1$ is called an  \emph{edge loop relation}.
Note that the pairing relations are the relations corresponding to the loops $(g,h,g)$ for $g\inv h$ a pairing transformation.

We give an alternative interpretation of the edge loop relations.
Let $E$ be an edge of $P$ and choose one of the two sides $S$ of $P$ containing $E$.
Recursively one obtains a sequence $E_1,E_2,\dots$ of edges of $P$ and a sequence of sides $S_1,S_2,\dots$, with $E_i\subseteq S_i$ for each $i$, and  which is uniquely determined by the following rules:
\begin{align}\label{EdgeD}
\begin{split}
&E_1=E, \quad S_1=S, \quad E_{n+1}=\gamma_{S_n}\inv(E_n) \quad \text{and} \\
&	S_{n+1} \text{ and } S'_n=\gamma_{S_{n}}^{-1}(S_n) \text{ are the two sides of } P \text{ containing } E_{n+1}.
\end{split}
\end{align}
Let $g_n=\gamma_{S_1}\cdots \gamma_{S_n}$ for every $n\ge 0$ (in particular, we agree that $g_0=1$).
Observe  that $g_n\inv(E)=E_{n+1}\subseteq P$.
In particular $E\subseteq g_{n}(P)$ and hence $E$ is contained in the tiles
	$$T_0=g_0(P)=P, T_1=g_{1}(P), T_2=g_{2}(P), \dots$$
and $T_{n-1}\cap T_n$ is a side of $\te$ for every $n\ge 1$.
Moreover $g_n\inv(T_{n-1}\cap T_n)=S'_n\ne S_{n+1}=g_n\inv(T_n\cap T_{n+1})$. Therefore $T_{n-1}\cap T_n$ and $T_n\cap T_{n+1}$ are the two sides of $T_n$ containing $E$.
This proves that $T_0=P,T_1,T_2,\dots$ is a  sequence of tiles as defined in (\ref{EdgeLoopD}).
We know this  is a periodic sequence and if it has period $m$ then $(T_0,T_1,\dots,T_m)$ is the edge loop defined by $E$,$S$ and $P$ and $(g_0=1,g_1,\dots,g_m)$ is the loop of $G$ associated to this edge loop.
As $T_i$ determines $g_i$,  the sequence $g_0,g_1,\dots$ also is periodic of period $m$.
As $E_i=g_i\inv(E)$ and $S_i=P\cap g_{i-1}\inv g_i(P)$, the sequence  of pairs $(E_{i},S_{i})$ also is periodic,
say of period  $k$ and let $t=\frac{m}{k}$.
Then $t$ is a positive integer and the edge loop relation associated to the loop $(g_0,g_1,\dots,g_m)$ takes the form $1=g_m=(\gamma_{S_1}\cdots \gamma_{S_k})^t$.
This  usually is called a cycle relation. This is the second type of relations we need for the Poincar\'{e} result.
Observe that cycle relation  and edge loop relation are synonymous concepts.

Since $m$ is the minimum integer so that $g_m=g_0=1$ and $g_{k t}=(\gamma_{S_1}\cdots \gamma_{S_k})^{t}$,
we deduce that $t$ is the order of $\gamma_{S_1}\cdots \gamma_{S_k}$.
An alternative way to see that $\gamma_{S_1}\dots \gamma_{S_k}$ has finite order is by observing that it stabilizes the edge $E$ and the stabilizer of every cell is finite.

Some of the cycle relations are redundant. For example, if $S$ and $R$ are the two sides of $P$ containing the edge $E$ and if  $(T_0=P,T_1,T_2,\dots,T_{m-1},T_m=P)$ is the edge loop obtained by applying the above procedure to $E$ and $S$ then the edge loop obtained by applying the procedure to $E$ and $R$ is  $(T_0=P,T_{m-1},\dots,T_2,T_1,T_0=P)$. These two loops  give rise to equivalent cycle relations: $(\gamma_{S_1}\dots \gamma_{S_k})^t=1$ and $(\gamma_{S_k}\inv \dots \gamma_{S_1}\inv)^t=1$.
This is because if the period of the list $(E_i,S_i)$ obtained from $E$ and $S$ is $k$ then $\gamma_{S_{k-1}}\inv(S_{k-1})'=R$ and hence the list of pairs of edges and sides starting with $(E,R)$ is $(E,R),(E_{k-1},S_{k-1}'),\dots,(E_1,S_1'),\dots$.
On the other hand, if we replace $E$ by one of the edges $E_i$ then the sequence of pairs of edges and sides obtained is a shift of the list obtained with $E$ and $S$ or $R$. Then, the cycle relation obtained with $E_i$ is a conjugate of the cycle relation associated with $E$.

The edges in the list $E_1,\dots,E_k$ form a \emph{cycle of edges} of $P$. Clearly, the non-equivalent cycles of edges of $P$ define a partition of the edges of $P$.

\begin{example}{\rm
Let $n\ge 3$ and let $D_{2n}$ be the group of isometries of a regular polygon of the Euclidean plane with $n$ sides.
Then, the acute wedge $P$ between the two half-lines $S_1=\{(x,0):x>0\}$ and $S_2=\left\{\left(x,x\tan(\frac{\pi}{n}\right):x>0\right\}$ is a fundamental polyhedron of $D_{2n}$.
Let $g_i$ be the reflection in the line containing $S_i$. Then $S_i=P\cap g_i(P)$ and $g_i^2=1$.
So $S_1$ and $S_2$ are the two sides of $P$, as fundamental polyhedron of $G$ and the pairing relations are the reflection relations $\gamma_{S_1}^2=\gamma_{S_2}^2=1$.
The only edge is the vertex  consisting of the single  point $(0,0)$. The sequence of edges and sides starting with $E$ and $S_1$ is periodic of period 2. Clearly,   $\gamma_{S_1}\gamma_{S_2}$ has finite order and in fact it has order $n$ because it is the rotation around $(0,0)$ of angle $\frac{2\pi}{n}$. Therefore the only cycle relation is $(\gamma_{S_1}\gamma_{S_2})^n=1$.}
\end{example}

We are ready to state Poincar\'{e}'s Theorem on presentations of discontinuous groups.

\begin{theorem}[Poincar\'{e}] \label{PoincareT}
Let $\XQ$ be either an Euclidean, hyperbolic or  spherical space.
Let $P$ be a locally finite fundamental polyhedron  for a discontinuous group of isometries on the space $\XQ$.
The  pairing transformations generate $G$, that is
  $$G = \GEN{\gamma \in G \; : \;  P \cap \gamma(P) \textrm{ is a side of } P},$$
and the pairing and cycle relations form a complete set of relations for $G$.
\end{theorem}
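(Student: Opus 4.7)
The plan is to first establish that pairing transformations generate $G$ via a path-connectivity argument, and then to show that the pairing and cycle relations suffice by translating the question into a statement about loops which can be handled using the simple-connectedness of $\XQ$.

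For the generation claim, I would fix a base point $x_{0}\in P^{\circ}$. Given any $g\in G$, choose $y\in g(P)^{\circ}$. Since each cell of codimension at least $2$ is a proper subspace of $\XQ$ and local finiteness of $\te$ ensures the set of such cells is countable, Lemma~\ref{ConnectedCod2L}(2) produces a path in $\XQ$ from $x_{0}$ to $y$ avoiding all of them. A small perturbation, permitted again by local finiteness, makes the path cross sides transversally at finitely many points, each in the relative interior of a side. Between consecutive crossings the path sits in the interior of a single tile. The path therefore determines a sequence of tiles $P=g_{0}(P),g_{1}(P),\ldots,g_{n}(P)=g(P)$ with each $g_{i-1}(P)\cap g_{i}(P)$ a side, so that $\gamma_{i}=g_{i-1}^{-1}g_{i}$ is a pairing transformation and $g=\gamma_{1}\cdots\gamma_{n}$.

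For the relations, let $F$ be the free group on the set of pairing transformations, $N$ the normal closure in $F$ of the pairing and cycle relators, and $\pi\colon F/N\to G$ the canonical homomorphism, surjective by the first step. Injectivity of $\pi$ is equivalent to the assertion that for every loop $(g_{0},g_{1},\ldots,g_{m})$ of $G$ with respect to $P$, the associated word $w=\gamma_{S_{1}}\cdots\gamma_{S_{m}}$, with $S_{i}=P\cap g_{i-1}^{-1}g_{i}(P)$ as in Example~\ref{LoopsEx}, lies in $N$. To attack this, I would attach to the loop a closed continuous path $\alpha$ in $\XQ$ passing through the base points $g_{0}(x_{0}),g_{1}(x_{0}),\ldots,g_{m}(x_{0})=g_{0}(x_{0})$ and through chosen points in the relative interiors of the sides $g_{i-1}(S_{i})$, with the arcs between consecutive marked points lying inside the corresponding tile interiors. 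Because $\XQ$ is simply connected, $\alpha$ is null-homotopic in $\XQ$.

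The main obstacle, and the heart of the argument, is to convert this null-homotopy into a finite sequence of \emph{elementary moves} on loops, each move justified by a pairing or a cycle relator. Concretely, I would select a generic homotopy $H\colon[0,1]^{2}\to\XQ$ from $\alpha$ to a constant path, transverse to the stratification of $\XQ$ by tiles, sides and edges (cells of codimension $0$, $1$, $2$ respectively; higher-codimension cells can be avoided thanks to Lemma~\ref{ConnectedCod2L}). As the deformation parameter varies, only finitely many qualitative events occur: either (i) a pair of side-crossings of the current path meets and cancels, which corresponds to inserting or deleting a factor $\gamma_{S}\gamma_{S'}=1$ and is a pairing (or reflection) relation; or (ii) the path is pushed across an edge $E$, which re-routes the corresponding portion of the loop through the tiles around $E$ and, by Lemma~\ref{EdgeLoopAllTilesL} combined with the construction in~(\ref{EdgeD}), is exactly the cycle relation associated with $E$. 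Iterating these moves reduces $w$ to the empty word in $F/N$, proving $w\in N$ and hence the injectivity of $\pi$. The delicate technical content is the transversality/genericity statement in the polyhedral setting and the verification that no events beyond (i) and (ii) can occur, so that the pairing and cycle relators really do exhaust the needed relations.
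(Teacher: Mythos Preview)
Your outline is essentially correct and follows the classical strategy, but it takes a genuinely different route from the paper for the relations part. For the generators, both you and the paper argue via a path avoiding cells of codimension at least~$2$ (Lemma~\ref{ConnectedCod2L}) and then reading off pairing transformations from the sequence of tiles crossed; the paper replaces your ``small perturbation'' by an explicit inductive straightening of the path inside each tile, but the idea is the same.

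For the relations, however, the paper deliberately avoids the transversality/genericity machinery you invoke. Instead of deforming a null-homotopy and classifying the ``events'' (your moves (i) and (ii)), the paper constructs, for each cell $C$ of codimension at most~$2$ and each pair $g,h$ with $C\subseteq g(P)\cap h(P)$, an element $\kappa_C(g,h)\in\Delta=F/N$, and from this an invariant $\Phi(\alpha)\in\Delta$ attached to any piecewise-geodesic path $\alpha$ in $\XQ_{\te}$ via \emph{$\alpha$-adapted lists}. The core lemmas show that $\Phi$ is independent of the adapted list (Lemma~\ref{IndependencyAdaptedL}), that any two such paths with the same endpoints are strongly homotopic through piecewise-geodesic paths (Lemma~\ref{StrongHomotopicL}), and that $\Phi$ is locally constant, hence constant, on $\mathcal{C}_{x,y}([0,1],\XQ_{\te})$ (Lemma~\ref{PhiConstantL}). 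Comparing the value of $\Phi$ on the loop encoding a relation with its value on the constant path then gives $[g_1]\cdots[g_n]=1$ in $\Delta$. What this buys is precisely the elimination of the ``delicate technical content'' you flag: no generic-position argument for a $2$-parameter family is needed, and the verification that only pairing and cycle relations occur is absorbed into the algebraic properties of $\kappa$ (Lemma~\ref{kappaElementaryL}). Conversely, your approach is conceptually more transparent (it explains \emph{why} exactly these relators appear, as the links of codimension-$1$ and codimension-$2$ strata), at the cost of having to make the transversality step rigorous in the polyhedral setting.
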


We first  prove  that the pairing transformations generate $G$.
The proof that the pairings and cycle relations form a complete set of relations requires much more work and will be postponed until the end of the section.

\vspace{12pt}
\begin{proof} {\bf of Theorem~\ref{PoincareT} (Generators).}
Let $\te$ be the tessellation of $\XQ$ formed by the polyhedra $g(P)$ with $g\in G$.
As $G$ is countable, $\te$ is countable and, as every non-empty cell is the intersection of finitely many tiles (those containing a relative interior point of the cell), the number of cells of $\te$ is countable.
Let $Y$ be the complement in $\XQ$ of the union of the cells of codimension at least 2.
Then, by Lemma~\ref{ConnectedCod2L},  $Y$ is path-connected.

Let $g\in G$ and let  $x\in P^{\circ}$. So $g(x)\in g(P)^{\circ}$ and $x,g(x)\in Y$.
Because $Y$ is path connected, there exists a continuous function $\alpha : [0,1]\rightarrow Y$ with $\alpha (0)=x$ and $\alpha (1)=g(x)$.
The path $\alpha$ can be very odd and we need to choose a path that ``travels smoothly'' through the tiles of $\te$. Therefore,  we need to perform some ``deformations'' on $\alpha$. To do so,  we let $\mathcal{P}$ denote the finite set consisting of all the tiles that intersect the compact set $\alpha ([0,1])$ and define
    $$L=\left\{l\ge 1 :
    \matriz{{l}
    \text{There is a continuous function } \beta :[0,1] \rightarrow Y, \\
    \text{a sequence } 0=a_0<a_1<\dots <a_l\le 1 \text{ and different tiles } T_1,\dots,T_l \\
    \text{such that } \beta (a_0)=x, \beta ((a_l,1]) \cap (T_{1}\cup \cdots T_{l})=\emptyset, \alpha\mid_{(a_l,1]}=\beta\mid_{(a_1,1]}, \\
   \beta (a_i)=\alpha (a_i), \text{ and } \beta ([a_{i-1},a_i]) \subseteq T_i\text{ for every } 1\le i\le l
   }
    \right\}.$$

  We first prove that $1\in L$. To show this,  put $a_{1}=\max (\alpha^{-1}(P))$, $T_1=P$
  and define the continuous function $\beta :[0,1] \rightarrow Y$  as follows.
The restriction of $\beta$ to $[0,a_1 ]$ is such that its image runs through the geodesic $[x,\alpha (a_1)]$ from $x$ to
$\alpha (a_1)$.
The restriction of $\beta$ to $[a_1,1]$ is the same function as the restriction of $\alpha$ to $[a_1,1]$.
Clearly $\beta$, $0<a_0< a_1 \le 1$ and $T_1$ satisfy the conditions of the definition of $L$.
Observe that if $l\in L$ then $l\le |\mathcal{P}|$ and hence $L$ is bounded. Let $l$ be maximal in $L$ and let $\beta$, $0=a_0<a_1<\dots<a_l\le 1$ and $T_1,\dots,T_l$ satisfy the conditions of the definition of $L$.
We claim that  $a_l=1$.
Indeed,  if $a_l<1$ we get that $\beta (a_l)\in \partial T_l$ and thus, by Lemma~\ref{BoundaryUnionSides},  $\beta (a_l)\in S$, for some side $S$ of $T_{l}$.
Since $\beta (a_l) \in Y$ (and thus $\beta (a_l)$ is not in a cell of codimension at least $2$) we get that   $S$ is the cell of $\te$ generated by $\beta (a_l)$. Hence, $\beta (a_l) \in S^r$.
By Proposition~\ref{SideIncludedTwoTilesP}, $S=T_{l}\cap T$ for some tile $T$ of $\te$ with $T\neq T_l$.
Furthermore, $T$ and $T_l$ are the only tiles of $\te$ containing $S$.
So, $\beta (a_l)$ does not belong to any other tile of $\te$.
Then, $T_l\cup T$ contains a neighborhood of $\beta (a_l)=\alpha (a_l)$.
Because $\beta ((a_l,1)) \cap (T_1\cup \cdots \cup T_l) = \alpha ((a_l ,1]) \cap (T_{1}\cup \cdots T_l) =\emptyset$, it follows that
   $\beta ((a_l, b))\subseteq T$
for some $a_l<b\leq 1$.
Hence $T\neq T_i$ for every $i\in \{ 1, \ldots , l\}$.
Let $a_{l+1} =\max (\alpha^{-1}(T))$.
So, $a_{l+1}>a_l$.
Put $T_{l+1}=T$.
One can now define a continuous function
  $\beta ' :[0,1] \rightarrow Y$
as follows.
On $[a_0,a_l] \cup (a_{l+1},1]$ the function $\beta '$ agrees with $\beta$.
On  $[a_l,a_{l+1} ]$ the function $\beta '$  is such that its image runs through the geodesic $[\beta (a_l), \beta (a_{l+1})]$ from $ \beta (a_l)$ to
$\beta (a_{l+1})$ in case  $\beta (a_l)$  and $\beta(a_{l+1})$ are not antipodal and otherwise  one chooses $x\in T^{\circ}$ and then one defines on $[a_l,a_{l+1} ]$ the function $\beta '$   such that its image runs first through the geodesic $[\beta (a_l),x]$ and then through the geodesic $[x, \beta(a_{l+1})]$.
So $\beta ' ([a_l , a_{l+1}]) \subseteq T_{l+1}$.
Clearly, $\beta ' ([a_{i-1},a_{i}]) =\beta ([a_{i-1},a_{i}])\subseteq T_{i}$ for $1\leq i\leq l$ and
$\beta ' ((a_{l+1},1]) \cap (T_{1}\cup \cdots T_{l+1}) =\emptyset$.
So $\beta'$, $a_0<a_1<\dots<a_l<a_{l+1}\le 1$ and $T_1,\dots,T_l,T_{l+1}$ satisfy the conditions of the definition of $L$ and  this contradicts with the maximality of $l$.

So, indeed $a_l=1$ and thus there exists a continuous function $\alpha : [0,1]\rightarrow Y$ and a sequence
$0=a_{0} < a_1 < \ldots < a_{l} =1$ such that $\alpha (a_0)=x$, $\alpha (a_l)=g(x)$ and
$\alpha ([a_{i-1},a_{i}]) \subseteq T_{i}$ with $T_{i}$ tile of $\te$ for each $1\leq i\leq l$.
Clearly $T_0=P$.
Write $T_{i}=g_i(P)$ with $g_i\in G$.
Then,
$g_1=1$, $g_l=g$ and $\alpha (a_i)\in g_i(P)\cap g_{i+1}(P)$  for every $i\in \{ 1,2,\dots,l-1\}$.
Since $\alpha(a_i)$ is not in any cell of codimension greater than 1, each $g_i(P)\cap g_{i+1}(P)$ is a side of $g_i(P)$ and therefore $P\cap g_i\inv g_{i+1}(P)$ is a side of $P$.
Hence $\gamma_i=g_i\inv g_{i+1}$ is a pairing transformation for each $1\le i<l$.
Finally $g=g_l=(g_1\inv g_2)(g_2\inv g_3)\cdots (g_{l-1}\inv g_l)=\gamma_1\cdots \gamma_{l-1}$.
Hence, $g$ belongs to the subgroup of $G$ generated by the pairing transformations.
Because $g$ is an arbitrary element of $G$, the result follows.
\end{proof}

\begin{remark}
Note that the previous proof can be simplified by using \cite{alexandrov}. Indeed at the end of the proof of \cite[Theorem 1]{alexandrov}, the author proves that ``every segment in $\R^n$ can be covered by a sequence of polyhedra in which every two consecutive polyhedra are adjacent at an $(n-1)$-dimensional face''. Using this, one can then easily construct a sequence
$0=a_{0} < a_1 < \ldots < a_{l} =1$ such that $\alpha (a_0)=x$, $\alpha (a_l)=g(x)$,
$\alpha ([a_{i-1},a_{i}]) \subseteq T_{i}$ with $T_{i}$ tile of $\te$ and such that $T_i \cap T_{i+1}$ is a side of $T_i$ for each $1\leq i\leq l$. The proof then finishes in the same way as above. However, since our goal is to make the proof self-contained, and avoid ambiguities, the proof given above seemed best suited.
\end{remark}

Let $\Delta$  denote the group given by the presentation of Theorem~\ref{PoincareT}. More precisely,
  $$\Delta=F/N,$$
where $F$ is the free group with basis  the symbols $[g]$, one for each pairing transformation $g$, and $N$ is the normal closure of the subgroup of $F$ generated by the set $X$ consisting  of the  pairing and cycle relations, i.e. $X$ is formed by the products $[\gamma_S][\gamma_{S'}]$ with $S$ and $S'$ paired sides of $P$ (pairing relations) and the elements of the form $([\gamma_{S_1}]\dots [\gamma_{S_k}])^t$, where $(E_1=E,S_1=S,E_2,S_2,\dots)$ is the list defined by (\ref{EdgeD}) for an edge $E$ and a side $S$ containing $E$, $k$ is the period of the list and
$t$ is the order of $\gamma_{S_1}\dots \gamma_{S_k}$ (cycle relations).

It is clear that the function 
\begin{eqnarray*} \varphi:  \Delta & \rightarrow & G \\
\left[g \right] & \mapsto & g
\end{eqnarray*}
is surjective. So, in order to prove the relation part of Theorem~\ref{PoincareT}, we just have to prove the injectivity of $\varphi$, i.e. we have to prove that if $g_1g_2 \ldots g_n=1$ for some $g_i \in G$, then $\left[g_1\right]\left[g_2\right]\ldots \left[g_n\right]=1$. The main idea therefore is to link products in $\Delta$ with loops in the space on which $G$ is acting. We then show that if $g_1g_2 \ldots g_n=1$, then the loop associated to $\left[g_1\right]\left[g_2\right]\ldots \left[g_n\right]$ is homotopic to the trivial loop, i.e. a point, and hence its value is $1$.

Recall that the edge loop relations and the cycle relations are synonymous concepts, so we may replace the cycle relations by the edge loop relations
$$[g_0\inv g_1][g_1\inv g_2]\cdots [g_{n-1}\inv g_n] ,$$ where $(g_0,g_1,\dots,g_{n-1},g_n)$ is an edge loop. Abusing notation, we will consider the symbols $[g]$, with $g$ a pairing transformation, as elements of $\Delta$. Hence,
	$$[\gamma_S][\gamma_{S'}]=1, \text{ for every side } S \text{ of } P$$
(so $[\gamma_{S}]^{-1}=[\gamma_{S}^{-1}]$)
and
	$$[g_0\inv g_1][g_1\inv g_2]\cdots [g_{n-1}\inv g_n]=1, \text{ for every edge loop } (g_0,g_1,\dots,g_n) \text{ of } P.$$

Let $g,h\in G$ and let $C$ be a cell of $\te$ of codimension $m$ at most $2$ that is  contained in $g(P)\cap h(P)$.
We define $\kappa_C (g,h)\in \Delta$ as follows.
\begin{itemize}
\item If $m=0$ then $\kappa_C(g,h)=1$.
\item If $m=1$ then $\kappa_C (g,h) =[g\inv h]$.
\item If $m=2$ then $C$ is an edge contained in $g(P)\cap h(P)$ and thus, by Lemma~\ref{EdgeLoopAllTilesL}, $g$ and $h$ belong to the edge loop of $C$. Up to a cyclic permutation, we can write the edge loop of $C$ as  $(g= k_0, \ldots , k_t =h, k_{t+1}, \ldots , k_m=g)$ (or the equivalent edge loop $(g=k_m,k_{m-1},\dots,k_t=h,k_{t-1},\dots,k_1,k_0=g)$) and we set
	$$\kappa_C (g,h) =[k_0\inv k_1][k_1\inv k_2]\;  \cdots [k_{t-1}\inv k_t] = [k_m\inv k_{m-1}] \cdots [k_{t+1}\inv k_{t}].$$
\end{itemize}

Observe that $\kappa_{C}(g,g)=1$ in the three cases.

\begin{lemma}\label{kappaElementaryL}
Let $g,h\in G$ and let $C$ be a cell of $\te$ of codimension $m\le 2$ that is contained in $g(P)\cap h(P)$. The  following properties hold.
\begin{enumerate}
\item\label{kappaAntisymmetric} $\kappa_C(g,h)=\kappa_C(h,g)\inv$.
\item\label{kappaIndependence} If $D$ is cell of $\te$ contained in $C$ and of codimension at most $2$ then $\kappa_D(g,h)=\kappa_C(g,h)$.
\item\label{kappaTransitiveCell} If $g_1,\dots,g_n\in G$ and $C\subseteq \bigcap_{i=1}^n g_i(P)$ then
    $\kappa_C(g_1,g_n)=\kappa_C(g_1,g_2)\cdots \kappa_C(g_{n-1},g_n)$.
\end{enumerate}
\end{lemma}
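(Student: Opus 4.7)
Part (1) is a direct case analysis on the codimension $m$ of $C$, recalling that $\kappa_C(g,g)=1$ in all three cases. For $m=0$ both sides are $1$. For $m=1$, $\kappa_C(g,h)=[g\inv h]$, and the pairing relation $[g\inv h][h\inv g]=1$ yields $\kappa_C(g,h)\inv=[h\inv g]=\kappa_C(h,g)$. For $m=2$, fix an edge loop $(k_0,k_1,\ldots,k_m=k_0)$ of $C$ with $g=k_0$ and $h=k_t$. A cyclic shift of the loop starting at $h$ gives $\kappa_C(h,g)=[k_t\inv k_{t+1}]\cdots[k_{m-1}\inv k_m]$, so $\kappa_C(g,h)\kappa_C(h,g)=[k_0\inv k_1]\cdots[k_{m-1}\inv k_m]$ is the full edge loop word, equal to $1$ in $\Delta$ by the corresponding cycle relation.

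Part (2) is handled by a case analysis on the codimensions $m\le m'$ of $C$ and $D$. The key structural fact is: if $m'=m$, then $D$ and $C$ generate the same subspace of $\XQ$ (same dimension, one contained in the other), so $D^r$ is open in this subspace and contained in $C$, hence $D^r\subseteq C^r$; since cell relative interiors partition $\XQ$ by Lemma~\ref{CellGeneratedL} and the discussion following it, we conclude $D=C$ and the identity is trivial. This covers $(m,m')\in\{(0,0),(1,1),(2,2)\}$. When $m=0$ and $m'>0$, the tile $C\subseteq g(P)\cap h(P)$ forces $g(P)=h(P)=C$, hence $g=h$, and both sides are $1$. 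The remaining case is $(1,2)$ with $g\ne h$: here $C=g(P)\cap h(P)$ is a side and $D\subseteq C$ is an edge. Since $g(P)$ and $h(P)$ are the unique two tiles meeting along the side $C$, and this side contains $D$, the fact (Proposition~\ref{EdgesContainedTwoSidesP}) that $g(P)$ has exactly two sides containing $D$ forces $g(P)$ and $h(P)$ to be consecutive in the edge loop of $D$; starting this loop at $g(P)$ with $h(P)$ next yields $\kappa_D(g,h)=[g\inv h]=\kappa_C(g,h)$.

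Part (3) reduces by induction on $n$ to the case $n=3$: $\kappa_C(g_1,g_3)=\kappa_C(g_1,g_2)\kappa_C(g_2,g_3)$. Split on $m$. For $m=0$, all $g_i$ coincide with the unique tile $C$, so everything is $1$. For $m=1$, Proposition~\ref{SideIncludedTwoTilesP} forces at least two of $g_1,g_2,g_3$ to agree; each subcase reduces either to a trivial identity or to the pairing relation $[g\inv h][h\inv g]=1$. For $m=2$, all three tiles $g_i(P)$ appear in the edge loop $(k_0,\ldots,k_m=k_0)$ of $C$ by Lemma~\ref{EdgeLoopAllTilesL}. Writing each $\kappa_C(g_i,g_{i+1})$ as the forward product along the cyclically oriented edge loop, the concatenation $\kappa_C(g_1,g_2)\kappa_C(g_2,g_3)$ is the forward traversal from $g_1$ to $g_3$ through $g_2$. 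Either this coincides with the direct forward arc from $g_1$ to $g_3$, so we obtain $\kappa_C(g_1,g_3)$ at once; or it wraps once around the loop, in which case the full edge loop word appears as a factor and equals $1$ by the cycle relation, again leaving $\kappa_C(g_1,g_3)$.

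The main obstacle is careful bookkeeping of cyclic positions in the $m=2$ case of (3): one must verify that a concatenation of two forward arcs on a cyclic loop is either itself a single forward arc or a single arc plus exactly one full loop, and then use the cycle relation to absorb the extra loop. All remaining subcases are straightforward applications of the pairing relations together with the cell-partition property following from Lemma~\ref{CellGeneratedL}.
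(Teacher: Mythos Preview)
Your proof is correct and follows essentially the same route as the paper's: a case analysis on the codimension of $C$ (and of $D$ in part~(2)), reducing everything to the pairing relations and the edge-loop (cycle) relations. The only presentational differences are that in part~(2) you explicitly invoke the relative-interior partition following Lemma~\ref{CellGeneratedL} to conclude $D=C$ when the codimensions agree (the paper simply passes to the case $C\ne D$), and in the $m=2$ case of part~(3) you allow the concatenated arc to wrap once around the loop and absorb the extra cycle word, whereas the paper instead reorders the edge loop (possibly reversing orientation) so that $g_1,g_2,g_3$ appear in that cyclic order and no wrapping occurs. Both variants are straightforward and equivalent.
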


\begin{proof}
(\ref{kappaAntisymmetric})
If $m=0$, then $g=h$ and there is nothing to prove.
If $m=1$ then  $g^{-1}h =\gamma_{S_{g^{-1}h}}$ and $h^{-1}g= \gamma_{S_{h^{-1}g}} =\gamma_{S_{g^{-1}h}'}$ and hence $\kappa_C(g,h)\kappa_C(h,g)=[\gamma_{S_{g\inv h}}][\gamma_{S'_{g\inv h}}]=1$, a pairing relation. Finally if $m=2$, then we can write the edge loop of $C$ as $(g= k_0, \ldots , k_t =h, k_{t+1}, \ldots , k_m=g)$ and thus
\begin{eqnarray*}
\kappa_C (g,h) &  = & [k_0\inv k_1][k_1\inv k_2]\;  \cdots [k_{t-1}\inv k_t], \\
\kappa_C (h,g) & = &[k_t\inv k_{t+1}][k_{t+1}\inv k_{t+2}]\;  \cdots [k_{m-1}\inv k_m].
\end{eqnarray*}
It is now easy to see that $\kappa_C(g,h)\kappa_C(h,g)=1$ and hence the result follows.

(\ref{kappaIndependence}) If $C=D$ then there is nothing to prove. So assume that $C\ne D$. If $C$ is a side then $D$ is an edge and $g$ and $h$ are two consecutive elements of the edge loop of $D$. Then $\kappa_D(g,h)=[g\inv h]=\kappa_C(g,h)$. Otherwise, $C$ is a tile and hence $g=h$. Thus $\tau_D(g,h)=1=\tau_C(g,h)$.

(\ref{kappaTransitiveCell}) By induction it is enough to prove the statement for $n=3$. So assume $n=3$. If either $g_1=g_2$ or $g_2=g_3$ then the desired equality is obvious. So assume that $g_1\ne g_2$ and $g_2\ne g_3$.
If $C$ is an edge then, up to a cyclic permutation, possibly  reversing the order and making use of
Lemma~\ref{EdgeLoopAllTilesL}, the edge loop of $C$ is of the form $(g_1=k_0,\dots,g_2=k_t,\dots,g_3=k_l,\dots,k_m)$. Then,
\begin{eqnarray*}
    \kappa_C(g_1,g_3)&=&[k_0\inv k_1][k_1\inv k_2]\cdots [k_{l-1}\inv k_l] \\
   &=& ([k_0\inv k_1][k_1\inv k_2]\cdots [k_{t-1}\inv k_t]) \; ( [k_t\inv k_{t+1}] \cdots [k_{l-1}\inv k_l]) \\
   &=& \kappa_C(g_1,g_2)\kappa_C(g_2,g_3)
\end{eqnarray*}
Otherwise, $S=g_1\inv(C)$ is a side of $P$, $\gamma_S=g_1\inv g_2$, $\gamma_{S'}=g_2\inv g_1$ and $g_1=g_3$.
Then,
	$$\kappa_C(g_1,g_3) = 1 = [\gamma_S][\gamma_{S'}] = \kappa_C(g_1,g_2)\kappa_C(g_2,g_3).$$
\end{proof}

We denote by $\XQ_{\te}$ the complement in $\XQ$ of the union of the cells of $\te$ of codimension at least three.
By Lemma~\ref{CellGeneratedL}, every element of $\XQ_{\te}$ is either in the interior of a tile or in the relative interior of a side or an edge.
The first ones are those that belong to exactly one tile, the elements of the relative interior of one side belong to exactly two tiles (Lemma~\ref{SidePropertiesL})  and the remaining elements belong to at least three tiles.

If $x\in \XQ_{\te}$, $g,h\in G$ and $x\in C\subseteq g(P)\cap h(P)$ for some cell $C$ then the codimension of $C$ is at most $2$ and we define $$\kappa_x(g,h)=\kappa_C(g,h).$$
This is well defined because if $D$ is another cell containing $x$ and contained in $g(P)\cap h(P)$ with $\kappa_C(g,h)\ne \kappa_D(g,h)$ then $g\ne h$ and $C\ne D$.
Hence neither $C$ nor  $D$ is a  tile and either $C$ or $D$ is a side. Therefore, $g(P)\cap h(P)$ is a side and hence, by Lemma~\ref{kappaElementaryL}.(3),  $\kappa_C(g,h)=[g\inv h]=\kappa_D(g,h)$, a contradiction.
This proves that indeed $\kappa_{x}(g,h)$ is well defined.
By Lemma~\ref{kappaElementaryL} we have $\kappa_x(g,h)=\kappa_x(h,g)\inv$ and if $x\in \cap_{i=1}^n g_i(P)$ with $g_1,\dots,g_n\in G$ then
	\begin{equation}\label{kappaTransitive}
	 \kappa_x(g_1,g_n)=\kappa_x(g_1,g_2)\cdots \kappa_x(g_{n-1},g_n)
	\end{equation}

\begin{lemma}\label{PhiOneStepL}
Let $\alpha:[0,1]\rightarrow \XQ_{\te}$ be a continuous function and let $0\le a < b < c \le 1$ and $g,h,k\in G$ be such that $\alpha(a)\in g(P)$, $\alpha((a,c))\subseteq C^r$ for a cell $C$ of $k(P)$ and $\alpha ((a,b))\subseteq D^r$ for a cell $D$ of $h(P)$.
Then
	$$\kappa_{\alpha(a)}(g,k) = \kappa_{\alpha(a)}(g,h)\; \kappa_{\alpha(b)}(h,k).$$
\end{lemma}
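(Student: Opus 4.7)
The plan is to reduce everything to a computation on a single cell by identifying $C$ and $D$ and then comparing the cell $C_a$ generated by $\alpha(a)$ with $C$.

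First I would observe that for every $t\in (a,b)$, one has $\alpha(t)\in C^r\cap D^r$, since $(a,b)\subseteq (a,c)$. By Lemma~\ref{CellGeneratedL}, the relative interiors of the cells of $\te$ form a partition of $\XQ$, so $C=D$. In particular, $C\subseteq h(P)\cap k(P)$, and since $b\in (a,c)$, the point $\alpha(b)$ lies in $C^r$, so the cell of $\te$ generated by $\alpha(b)$ is exactly $C$; hence $\kappa_{\alpha(b)}(h,k)=\kappa_C(h,k)$.

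Next I would analyze the point $\alpha(a)$. Because $C$ is closed and $\alpha((a,c))\subseteq C$, continuity gives $\alpha(a)\in C\subseteq h(P)\cap k(P)$, and together with the hypothesis $\alpha(a)\in g(P)$ this puts $\alpha(a)\in g(P)\cap h(P)\cap k(P)$. Let $C_a$ denote the cell of $\te$ generated by $\alpha(a)$; it has codimension at most $2$ since $\alpha(a)\in\XQ_{\te}$, and $C_a\subseteq g(P)\cap h(P)\cap k(P)$. The key comparison is $C_a\subseteq C$: if $T$ is any tile with $\alpha(t)\in T$ for some $t>a$ close to $a$, then $T$ is closed and $\alpha(t)\to\alpha(a)$, so $\alpha(a)\in T$; hence the collection of tiles containing $\alpha(a)$ is at least as large as the collection of tiles containing $\alpha(t)$, and intersecting gives $C_a\subseteq C$. (Pinpointing this monotonicity is the one step that requires a bit of care; everything else is bookkeeping.)

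From $C_a\subseteq C$ together with $C\subseteq h(P)\cap k(P)$ and the codimension bound, Lemma~\ref{kappaElementaryL}.(\ref{kappaIndependence}) gives $\kappa_{C_a}(h,k)=\kappa_C(h,k)$. Combined with the first paragraph, this yields $\kappa_{\alpha(a)}(h,k)=\kappa_{C_a}(h,k)=\kappa_C(h,k)=\kappa_{\alpha(b)}(h,k)$. On the other hand, since $C_a\subseteq g(P)\cap h(P)\cap k(P)$ and has codimension at most $2$, Lemma~\ref{kappaElementaryL}.(\ref{kappaTransitiveCell}) applied with $g_1=g$, $g_2=h$, $g_3=k$ yields
\[
\kappa_{C_a}(g,k)=\kappa_{C_a}(g,h)\,\kappa_{C_a}(h,k).
\]
Using $\alpha(a)\in C_a^r$ to rewrite $\kappa_{C_a}(g,k)=\kappa_{\alpha(a)}(g,k)$ and $\kappa_{C_a}(g,h)=\kappa_{\alpha(a)}(g,h)$, and inserting the identification $\kappa_{C_a}(h,k)=\kappa_{\alpha(b)}(h,k)$, gives exactly
\[
\kappa_{\alpha(a)}(g,k)=\kappa_{\alpha(a)}(g,h)\,\kappa_{\alpha(b)}(h,k),
\]
as required.
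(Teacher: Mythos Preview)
Your proof is correct and in fact slightly cleaner than the paper's argument. The key simplification you make is to note at the outset that $C=D$ (since their relative interiors meet and the relative interiors of cells partition $\XQ$), and then to work systematically with the cell $C_a$ generated by $\alpha(a)$. Once you have $C_a\subseteq C\subseteq h(P)\cap k(P)$ and $C_a\subseteq g(P)\cap h(P)\cap k(P)$, everything follows from the independence and transitivity parts of Lemma~\ref{kappaElementaryL}, with no case splitting.

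The paper, by contrast, does not name the cell $C_a$ and instead argues by cases: first disposing of $h=k$, then (for $h\ne k$) splitting on whether $C\subseteq g(P)$. When $C\not\subseteq g(P)$ it argues that $C$ must be a side, that $\alpha(a)$ lies in the relative interior of an edge $E\subsetneq C$, and then verifies the identity by an explicit edge-loop computation showing that $h$ and $k$ are consecutive in the edge loop of $E$. Your approach absorbs this last computation entirely into the already-proved Lemma~\ref{kappaElementaryL}.(\ref{kappaIndependence}) and~(\ref{kappaTransitiveCell}), which is why your proof is shorter and case-free. The only place to tighten the writing is the monotonicity step: rather than talking about ``some $t>a$ close to $a$'', just say that every tile $T$ containing $C$ (equivalently, containing any $\alpha(t)$ with $t\in(a,c)$, by the cell property) is closed and contains $\alpha((a,c))$, hence contains $\alpha(a)$; therefore the set of tiles through $\alpha(a)$ contains the set of tiles through $C$, giving $C_a\subseteq C$.
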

\begin{proof}
First of all, observe that $\alpha(a),\alpha(b)\in C\cap D$, $\alpha (a) \in g(P)\cap k(P)\cap h(P)$, $\alpha (b)\in h(P)\cap k(P)$, $C \subseteq h(P)$  and $D \subseteq k(P)$, because $\alpha$ is continuous and every cell is the closure of its relative interior.
The desired equality is clear if $h=k$.
So, assume that $h\ne k$.
Then $C$ and $D$ are not tiles because they are included in $h(P)\cap k(P)$.
If $C\subseteq g(P)$ then $\kappa_{\alpha(a)}(g,k) = \kappa_C(g,k) = \kappa_C(g,h)\; \kappa_C(h,k) = \kappa_{\alpha(a)}(g,h)\; \kappa_{\alpha(b)}(h,k)$, by Lemma~\ref{kappaElementaryL}.(\ref{kappaTransitiveCell}).
Assume that $C\not\subseteq g(P)$. In particular $g,h$ and $k$ are pairwise different and $\alpha(a)\not\in C^r$.
Hence $C$ is a side, because Lemma~\ref{CellGeneratedL} implies that  $\XQ_{\te}\cap E=E^r$ for every edge $E$.
Again by Lemma~\ref{CellGeneratedL}, we obtain that $\alpha(a)$ belongs to the relative interior of a cell $E$ of $g(P)$ and $E$ is properly contained in $C$.
Therefore $E$ is an edge and $h$ and $k$ appear\sout{s} consecutively in an edge loop of $E$, i.e. after a cyclic permutation or a reversing in the ordering, an edge loop of $E$ takes the form $(k_1=g,\dots,k_i=h,k_{i+1}=k,\dots,k_m)$. Then,
$\kappa_{\alpha(a)}(g,k)=\kappa_E(g,k)=([k_1\inv k_2]\cdots [k_{i-1}\inv k_i])\; [k_i\inv k_{i+1}] = \kappa_{E}(g,h)\kappa_C(h,k)=\kappa_{\alpha(a)}(g,h)\kappa_{\alpha(b)}(h,k)$, as desired.
\end{proof}

\begin{definition}
Let $\alpha : [a,b] \rightarrow \XQ_{\te}$ be a continuous function on a compact interval
$[a,b]$.
An $\alpha$-\emph{adapted list} is an  ordered list $\mathcal{L}=(a_0, g_1, a_1, g_2, \ldots , g_n, a_n)$ such that $a=a_0 < a_1 < \ldots < a_n=b$ and $\alpha (a_{i-1},a_i)$ is contained in the relative interior of a cell of $g_{i}(P)$ for all $1\leq i \leq n$.

Given an $\alpha$-adapted list $\mathcal{L}$, we define
	$$\Phi (\mathcal{L}) = \kappa_{\alpha(a_1)}(g_1, g_2) \; \kappa_{\alpha(a_2)} (g_2,g_3) \cdots \kappa_{\alpha(a_{n-1})}( g_{n-1},g_n),$$
unless $n=1$, where we set $\Phi(\mathcal{L})=1$.
\end{definition}

Observe that if $i\in \{1,\dots,n\}$ then $\alpha(a_i)$ belongs to the boundaries of both $\alpha((a_{i-1},a_i))$ and $\alpha((a_i,a_{i+1}))$. Hence, $\alpha(a_i)\in g_i(P)\cap g_{i+1}(P)$ and thus  $\kappa_{\alpha(a_i)}(g_i,g_{i+1})$ is well defined.

\begin{lemma} \label{IndependencyAdaptedL}
Let $\alpha : [a,b]  \rightarrow \XQ_{\te}$ be a continuous function such that both $\alpha(a)$ and $\alpha(b)$ belong to the interior of some tile.
If $\mathcal{L}$ and $\mathcal{L}'$ are  $\alpha$-adapted lists then $\Phi (\mathcal{L})=\Phi (\mathcal{L}')$.
\end{lemma}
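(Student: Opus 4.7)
The plan is a common-refinement argument. I will introduce two elementary moves on adapted lists, each preserving $\Phi$, and show that any two adapted lists can be connected by a finite sequence of such moves. \textbf{Move (A)} (subdivision): insert a new point $c\in(a_{i-1},a_i)$ and keep the tile $g_i$ on both new subintervals. The resulting list is adapted because $\alpha((a_{i-1},c))$ and $\alpha((c,a_i))$ both lie in $C^r$, where $C$ is the cell of $g_i(P)$ containing $\alpha((a_{i-1},a_i))$ in its relative interior. The only new factor in $\Phi$ is $\kappa_{\alpha(c)}(g_i,g_i)=1$, so $\Phi$ is unchanged.

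\textbf{Move (B)} (tile swap at an interior index $1<i<n$): replace $g_i$ by another tile $h$ for which $\alpha((a_{i-1},a_i))\subseteq h(P)$. Since distinct tiles meet only in their boundary, the same cell $C$ of $\te$ contains $\alpha((a_{i-1},a_i))$ in its relative interior and is contained in both $g_i(P)$ and $h(P)$. To show $\Phi$ is preserved I will apply Lemma~\ref{PhiOneStepL} on $[a_{i-1},a_i]$ with the lemma's $g$, $h$, $k$ taken to be $g_{i-1}$, $h$, $g_i$ and any $b\in(a_{i-1},a_i)$, obtaining
\[\kappa_{\alpha(a_{i-1})}(g_{i-1},g_i)=\kappa_{\alpha(a_{i-1})}(g_{i-1},h)\,\kappa_{\alpha(b)}(h,g_i).\]
The cells of $\te$ generated by $\alpha(a_{i-1})$, $\alpha(b)$ and $\alpha(a_i)$ are all sub-cells of $C$ of codimension at most $2$, so Lemma~\ref{kappaElementaryL}.(\ref{kappaIndependence}) collapses all local versions of $\kappa(h,g_i)$ to a common value $\nu:=\kappa_C(h,g_i)$. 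Thus $\kappa_{\alpha(a_{i-1})}(g_{i-1},h)=\kappa_{\alpha(a_{i-1})}(g_{i-1},g_i)\,\nu^{-1}$, while Lemma~\ref{kappaElementaryL}.(\ref{kappaTransitiveCell}) at the cell generated by $\alpha(a_i)$ gives $\kappa_{\alpha(a_i)}(h,g_{i+1})=\nu\,\kappa_{\alpha(a_i)}(g_i,g_{i+1})$. Multiplying the two identities, the factor $\nu$ cancels and
\[\kappa_{\alpha(a_{i-1})}(g_{i-1},h)\,\kappa_{\alpha(a_i)}(h,g_{i+1}) =\kappa_{\alpha(a_{i-1})}(g_{i-1},g_i)\,\kappa_{\alpha(a_i)}(g_i,g_{i+1}),\]
so $\Phi$ is unchanged.

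A preliminary observation rules out tile swaps on the first and last subintervals: since $\alpha(a)\in T^\circ$ for a unique tile $T$, and any other tile meets $T^\circ$ trivially, the cell of $g_1(P)$ containing $\alpha((a_0,a_1))$ cannot lie in $\partial g_1(P)$; hence this cell equals $g_1(P)$, forcing $g_1(P)=T$. Likewise $g_n(P)$ is the unique tile whose interior contains $\alpha(b)$, so $g_1$ and $g_n$ are determined by $\alpha$ alone. Given two adapted lists $\mathcal{L}$ and $\mathcal{L}'$, merge all their partition points into a common partition. Iterating Move (A) on $\mathcal{L}$ and on $\mathcal{L}'$ produces adapted lists $\mathcal{L}^*$ and $\mathcal{L}^{**}$ on this common partition, each carrying the tiles of the original list on the corresponding subintervals, with $\Phi(\mathcal{L})=\Phi(\mathcal{L}^*)$ and $\Phi(\mathcal{L}')=\Phi(\mathcal{L}^{**})$. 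By the rigidity observation, $\mathcal{L}^*$ and $\mathcal{L}^{**}$ agree on the first and last subintervals; applying Move (B) once per interior subinterval turns one into the other, preserving $\Phi$. Hence $\Phi(\mathcal{L})=\Phi(\mathcal{L}^*)=\Phi(\mathcal{L}^{**})=\Phi(\mathcal{L}')$. The main obstacle is the verification of Move (B); Lemma~\ref{PhiOneStepL} does the essential work there, and the remainder is careful bookkeeping with Lemma~\ref{kappaElementaryL}.
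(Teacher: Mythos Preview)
Your proof is correct and follows the same common-refinement strategy as the paper: pass to a joint subdivision and reduce to two elementary moves, an insertion of a partition point and a swap of the tile on one subinterval, checking that each preserves $\Phi$. The organizational difference is where the real work lands. The paper's insertion move allows the inserted tile $h$ to differ from $g_i$ and invokes Lemma~\ref{PhiOneStepL} there; its swap move is then handled by a direct case analysis on whether the cell $C$ carrying $\alpha((a_{i-1},a_i))$ is an edge or a side, and on whether the cells generated by $\alpha(a_{i-1})$ and $\alpha(a_i)$ coincide with $C$. You instead make the insertion trivial (same tile, so the new factor is $\kappa_{\alpha(c)}(g_i,g_i)=1$) and push the appeal to Lemma~\ref{PhiOneStepL} into the swap step, combining it with the independence and transitivity of $\kappa$ (Lemma~\ref{kappaElementaryL}) so that the factor $\nu=\kappa_C(h,g_i)$ cancels without any case split. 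Both routes rest on the same lemmas; yours trades the paper's case analysis for a second application of Lemma~\ref{PhiOneStepL}, which is a mild streamlining rather than a different idea.
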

\begin{proof}
Without loss of generality, we may assume that $[a,b]=[0,1]$.
Let $\mathcal{L}=(a_0,g_1,a_1,\dots,g_n,a_n)$ and $\mathcal{L}'=(a'_0,g'_1,a'_1,\dots,g'_m,a'_m)$.
First observe that $g_1(P)=g_1'(P)$ is the only tile containing $\alpha(0)$. Thus $g_1=g'_1$.
If $n=1$ then $\alpha([0,1))\subseteq g_1(P)^\circ$ and therefore $g'_i=g_1$ for every $i\in \{1,\dots,m\}$. Thus $\Phi(\mathcal{L})=1=\Phi(\mathcal{L}')$.
Similarly, if $m=1$ then $\Phi(\mathcal{L})=1=\Phi(\mathcal{L}')$. In the remainder of the proof we assume that $n,m>1$.

We construct an $\alpha$-adapted  list $\mathcal{D}_{\alpha}'$ containing $\mathcal{L}'$ and all the $a_i$'s as follows.
For every $1\le i\le i'<n$ and $j\in \{1,\dots,m-1\}$ such that $a_{i-1} \le a'_{j-1}< a_i \le a_{i'} < a'_j \le a_{i'+1}$, we insert in $\mathcal{L}'$ the sublist $(g_i,a_i,\dots,g_{i'},a_{i'})$ between $a'_{j-1}$ and $g'_j$.
Similarly, we construct another list $\mathcal{D}_{\alpha}$ containing $\mathcal{L}$ and all the $a'_i$'s.
We can consider the transition from $\mathcal{L}'$ to $\mathcal{D}_{\alpha}'$ (or from $\mathcal{L}$ to $\mathcal{D}_{\alpha}$) as the result of inserting finitely many  pairs $(g_i,a_i)$.
On the other hand, we can consider the transition from $\mathcal{D}_{\alpha}$ to $\mathcal{D}_{\alpha}'$ as the result of replacing finitely many  group elements.
Therefore, to prove the lemma, it is enough to deal with the following two cases:
(1) $\mathcal{L}'$ is obtained by inserting in $\mathcal{L}$ one pair $(h,b)$ between $a_{i-1}$ and $g_i$ for $a_{i-1}<b<a_{i}$ and $h\in G$; (2) $\mathcal{L}'$ is obtained by replacing in $\mathcal{L}$ one $g_i$ by $h$, and in both cases $\alpha((a_{i-1},a_i))$ is contained in the relative interior of a cell contained in $g_i(P)\cap h(P)$ and $h\ne g_i$.

(1) Assume  $\mathcal{L}'$ is obtained by inserting  one pair $(h,b)$ between $a_{i-1}$ and $g_i$.
We consider separately the cases when $i>1$ or $i=1$.
If $i>1$ then $a=a_{i-1}$, $b$, $c=a_i$, $g=g_{i-1}$, $h$ and $k=g_i$ satisfy the hypothesis of Lemma~\ref{PhiOneStepL} and therefore
	\begin{eqnarray*}
	 \Phi(\mathcal{L})&=&\kappa_{\alpha(a_1)}(g_1,g_2)\cdots \kappa_{\alpha(a_{n-1})}(g_{n-1},g_n) \\
	&=&
	\kappa_{\alpha(1)}(g_1,g_2)\cdots \kappa_{\alpha(a_{i-2})}(g_{i-2},g_{i-1}) \kappa_{\alpha(a_{i-1})}(g_{i-1},h) \kappa_{\alpha(b)}(h,g_i)  \\ & & \kappa_{\alpha(a_i)}(g_{i},g_{i+1}) \dots \kappa_{\alpha(a_{n-1})}(g_{n-1},g_n) \\
	&=& \Phi(\mathcal{L}').
	\end{eqnarray*}
If $i=1$ then $h=g_1$ because $\alpha(a_0)$ is in the relative interior of a tile.
Thus $\kappa_{\alpha(b)}(h,g_1)=1$ and hence
	\begin{eqnarray*}
	 \Phi(\mathcal{L})&=&\kappa_{\alpha(a_1)}(g_1,g_2)\cdots \kappa_{\alpha(a_{n-1})}(g_{n-1},g_n) \\
	&=&\kappa_{\alpha(b)}(h,g_1)\kappa_{\alpha(a_1)}(g_1,g_2)\cdots \kappa_{\alpha(a_{n-1})}(g_{n-1},g_n) \\
	&=& \Phi(\mathcal{L}').
	\end{eqnarray*}

(2) Assume $\mathcal{L}'$ is obtained by replacing $g_i$ with  $h\ne g_i$ in $\mathcal{L}$.
By definition, $\alpha((a_{i-1},a_i))\subseteq E^r$ for some cell $E$ contained in $g_i(P)\cap h(P)$.
Since $g_i\ne h$, clearly  $E$ is either an edge or a side.
As both $\alpha(0)$ and $\alpha(1)$ belong to the interior of tiles we have $i\ne 1,n$.
To prove $\Phi(\mathcal{L})=\Phi(\mathcal{L}')$, it is enough to show
$$\kappa_{\alpha(a_{i-1})}(g_{i-1},h)\kappa_{\alpha(a_i)}(h,g_{i+1})=\kappa_{\alpha(a_{i-1})}(g_{i-1},g_i)\kappa_{\alpha(a_i)}(g_i,g_{i+1}).$$
Let $D_1$ and $D_2$ be the cells generated by $\alpha(a_{i-1})$ and $\alpha(a_i)$ respectively.
Then $D_1,D_2\subseteq E$.
If $E$ is an edge then $D_1=D_2=E$ and hence
    \begin{eqnarray*}
    \kappa_{\alpha(a_{i-1})}(g_{i-1},h)\kappa_{\alpha(a_i)}(h,g_{i+1})&=&
    \kappa_{E}(g_{i-1},h)\kappa_{E}(h,g_{i+1})\\
    & =&  \kappa_{E}(g_{i-1},g_{i+1}) \\
    &=& \kappa_{E}(g_{i-1},g_i)\kappa_{E}(g_i,g_{i+1}) \\
    &=& \kappa_{\alpha(a_{i-1})}(g_{i-1},g_i)\kappa_{\alpha(a_i)}(g_i,g_{i+1}),
    \end{eqnarray*}
by Lemma~\ref{kappaElementaryL}.(\ref{kappaTransitiveCell}).
Otherwise, $E$ is a side.
If $D_1$ is a side then $D_1=E$ and $g_{i-1}\in \{g_i,h\}$. Thus
    \begin{eqnarray*}
    \kappa_{\alpha(a_{i-1})}(g_{i-1},h)\kappa_{\alpha(a_i)}(h,g_{i+1})&=&
    \kappa_{E}(g_{i-1},h)\kappa_{D_2}(h,g_{i+1})\\
    & =&  \kappa_{D_2}(g_{i-1},h)\kappa_{D_2}(h,g_{i+1}) \\
    &=& \kappa_{D_2}(g_{i-1},g_{i+1}) \\
    &= & \kappa_{D_2}(g_{i-1},g_i)\kappa_{D_2}(g_i,g_{i+1}) \\
    &=& \kappa_{E}(g_{i-1},g_i)\kappa_{D_2}(g_i,g_{i+1}) \\
    &=& \kappa_{\alpha(a_{i-1})}(g_{i-1},g_i)\kappa_{\alpha(a_i)}(g_i,g_{i+1}),
    \end{eqnarray*}
by statements (\ref{kappaIndependence}) and (\ref{kappaTransitiveCell}) of Lemma~\ref{kappaElementaryL}.
The case where $E=D_2$ is proved similarly.
Finally, assume that $E$ is a side and $D_1\ne E$ and $E\ne D_2$.
Then $D_1$ and $D_2$ are edges and $g_i$ and $h$ are consecutive members of the edge loops of $D_1$ and $D_2$.
After some reordering the edge loop of $D_1$ takes the form $(k_1=g_{i-1},k_2,\dots,k_t=g_i,k_{t+1}=h,\dots,k_m)$ and the edge loop of $D_2$ takes the form $(h_1=g_i,h_2=h,\dots,h_l=g_{i+1},\dots,h_n)$. Then $[k_t\inv k_{t+1}]=[g_i\inv h]=[h_1\inv h_2]$ and
\begin{eqnarray*}
\kappa_{\alpha(a_{i-1})}(g_{i-1},h)\; \kappa_{\alpha(a_i)}(h,g_{i+1})&=&
([k_1\inv k_2] \dots [k_{t-1}\inv k_t][k_t\inv k_{t+1}])\;  ([h_2\inv h_3]\dots [h_{l-1}\inv h_l]) \\
&=&([k_1\inv k_2] \dots [k_{t-1}\inv k_t]) \; ([h_1\inv h_2] [h_2\inv h_3]\dots [h_{l-1}\inv h_l] )\\
&=&\kappa_{\alpha(a_{i-1})}(g_{i-1},g_i)\; \kappa_{\alpha(a_i)}(g_i,g_{i+1}).
\end{eqnarray*}
This finishes the proof.
\end{proof}

To finish the proof of Theorem~\ref{PoincareT}, Lemma~\ref{PhiConstantL} will be crucial. To prove this lemma in a smooth way, we need to construct piecewise geodesic simple paths. Therefore we introduce the definition in the following paragraph along with Lemma~\ref{AdaptedExistenceL} and Lemma~\ref{StrongHomotopicL}. As suggested by the referee, this argumentation could be simplified by applying typical generic position/transversality arguments. However for completeness sake we prefer to give a detailed self-contained proof of such a construction.

A \emph{parametrization of a geodesic segment} $[x,y]$ in $\XQ$ is a surjective continuous function $\alpha:[a,b]\rightarrow [x,y]$ with $\alpha(a)=x$, $\alpha(b)=y$ and such that the map $t\rightarrow d(x,\alpha(t))$ is not decreasing (recall that $d$ denotes the distance function on $\XQ$). For such a map, if $T$ is a polyhedron then
$\alpha\inv(T)$ is a closed interval (maybe empty or of 0 length). Indeed, it is closed because so is $P$ and $\alpha$ is continuous. To prove that $\alpha\inv(T)$ is an interval, let $a\le t_0 < t_1 \le b$, with $\alpha(t_0),\alpha(t_1)\in T$ then $\alpha([t_0,t_1])$ is the geodesic segment $[\alpha(t_0),\alpha(t_1)]$
and hence it is contained in $T$ because $T$ is convex.
Therefore $[t_0,t_1]\subseteq \alpha\inv(T)$. This proves that $\alpha\inv(T)$ indeed is an interval.

By
   $$\mathcal{C}([a,b],\XQ_{\te})$$
we denote the set consisting of the continuous functions $\alpha:[a,b]\rightarrow \XQ_{\te}$ for which there is a finite ascending sequence $a=b_0< b_1 \dots < b_m=b$ such that, for every $i\in \{1,\dots,m\}$, the restriction of $\alpha$ to $[b_{i-1},b_i]$ is a parametrization of a geodesic segment.

\begin{lemma}\label{AdaptedExistenceL}
 If $\alpha\in \mathcal{C}([a,b],\XQ_{\te})$ then there is an $\alpha$-adapted list.
\end{lemma}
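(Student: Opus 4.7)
The plan is to build the partition by combining three ingredients: the piecewise-geodesic structure of $\alpha$, the local finiteness of $\te$, and the convexity of every tile. First I would cut $[a,b]$ using the breakpoints $a=b_0<b_1<\cdots<b_m=b$ at which $\alpha$ passes from one geodesic parametrization to the next, so that it suffices to refine each piece $[b_{j-1},b_j]$ separately and then concatenate, keeping all the $b_j$'s in the final partition.

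Fix one such subinterval $I=[b_{j-1},b_j]$ and let $\beta=\alpha|_I$. Since $\beta(I)$ is compact and $\te$ is locally finite, only finitely many tiles $T_1,\dots,T_k$ of $\te$ meet $\beta(I)$. For every $T_l$, the preimage $\beta^{-1}(T_l)$ is a closed subinterval of $I$; this is exactly the observation recorded in the paragraph preceding the lemma, relying on $T_l$ being closed and convex together with the monotone-distance property of a geodesic parametrization. Collecting all the endpoints of these finitely many intervals together with the breakpoints $b_0,\dots,b_m$ yields a finite ascending sequence $a=a_0<a_1<\cdots<a_n=b$.

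On each open subinterval $(a_{i-1},a_i)$, every set $\beta^{-1}(T_l)$ is either disjoint from $(a_{i-1},a_i)$ or contains it entirely, because its endpoints already lie in the partition. Hence the collection of tiles of $\te$ containing $\alpha(t)$ is the same for every $t\in(a_{i-1},a_i)$, say $T_1^{(i)},\dots,T_{k_i}^{(i)}$. By Lemma~\ref{CellGeneratedL}, the intersection $C_i=\bigcap_{l} T_l^{(i)}$ is a cell of $\te$, $\alpha(t)\in C_i^r$ for every such $t$, and since $\alpha(t)\in\XQ_{\te}$ this cell has codimension at most $2$. Choosing any $g_i\in G$ with $C_i\subseteq g_i(P)$ (for instance, the group element corresponding to any of the $T_l^{(i)}$), the list $(a_0,g_1,a_1,\dots,g_n,a_n)$ is $\alpha$-adapted.

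The only step requiring any genuine care is the claim that $\beta^{-1}(T_l)$ is a closed interval, but this is already explained in the paragraph introducing $\mathcal{C}([a,b],\XQ_{\te})$. No other obstacle arises; once the partition has been constructed the verification is purely a bookkeeping exercise built on top of Lemma~\ref{CellGeneratedL}.
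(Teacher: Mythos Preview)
Your argument is correct, and it is a genuinely different---and arguably cleaner---route than the paper's.

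The paper argues \emph{locally}: for each $t\in[0,1)$ it produces an $\epsilon>0$ and a cell $C$ with $\alpha((t,t+\epsilon))\subseteq C^r$ (and symmetrically on the left), by looking at the finitely many tiles containing $\alpha(t)$ and picking the minimum of the positive ``exit times'' $\epsilon_i$. It then runs a supremum argument: let $c$ be the supremum of those $t$ for which $\alpha|_{[0,t]}$ admits an adapted list, and use the two one-sided claims to push past $c$ unless $c=1$.

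You instead argue \emph{globally}: compactness of $\beta(I)$ plus local finiteness gives finitely many tiles $T_1,\dots,T_k$ meeting the image; each $\beta^{-1}(T_l)$ is a closed subinterval (the observation recorded just before the lemma); throwing in all $2k$ endpoints produces the entire partition in one stroke, and on each resulting open subinterval the set of tiles through $\alpha(t)$ is constant, so Lemma~\ref{CellGeneratedL} applies directly. This avoids the supremum bookkeeping altogether and makes the finiteness of the partition manifest from the outset. Both proofs rest on the same two ingredients---local finiteness of $\te$ and the interval property of $\beta^{-1}(T)$---but yours packages them more efficiently; the paper's version has the minor advantage that it never needs to name the full finite list of tiles meeting $\beta(I)$, only those through a single point at a time.
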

\begin{proof}
Without loss of generality, we may assume that $[a,b]=[0,1]$.
By restricting to the intervals $[b_{i-1},b_i]$, one may assume without loss of generality that $\alpha$ is a parametrization of a geodesic segment.
Then, for every cell $C$, $\alpha\inv(C)$ is a closed interval of $[0,1]$ (maybe empty or of length $0$) and the image of $\alpha$ intersects finitely many tiles.
We claim that for every $t\in [0,1)$ there is a cell $C$ and an $\epsilon>0$ such that $\alpha((t,t+\epsilon))\subseteq C^r$ and
for every $t\in (0,1]$ there is a cell $D$ and an $\epsilon>0$ such that $\alpha((t-\epsilon ,t))\subseteq D^r$.
By symmetry, we only prove the first statement.
So,   fix $t\in [0,1)$ and assume that  $T_1,\dots,T_k$ are the only tiles containing $\alpha(t)$.
For every $i\in \{ 1,\dots,k\}$, let $\epsilon_i=\max \{\epsilon \geq 0 : \epsilon \leq 1-t, \; \alpha(t+\epsilon)\in T_i\}$.
Let $U$ be a neighborhood of $\alpha(t)$ not intersecting any tile different from every $T_1, \ldots , T_k$. Then there is an
$\epsilon>0$ such that $\alpha([t,t+\epsilon])\subseteq U$ and therefore $\alpha(t+\epsilon)\in T_i$ for some $i$. Hence, $\epsilon_i>0$ for some $i\in \{1,\dots,k\}$.
Renumbering if necessary, we may assume that $m$ is a positive integer such that $m\leq k$ and
$\epsilon_i>0$ if and only if $i\le m$. Let $\epsilon=\min \{\epsilon_1,\dots,\epsilon_m\}$.
Then $\alpha((t,t+\epsilon))\subseteq T_1\cap \dots \cap T_m$ and $T_1,\dots,T_m$ are the unique tiles containing an element of
$\alpha ((t,t+\epsilon))$. This implies that $T_1\cap \dots \cap T_m$ is a cell, say $C$, and $\alpha((t,t+\epsilon))$ is contained in the relative interior of $C$ by Lemma~\ref{CellGeneratedL}.
This proves the claim.

Let $c$ be the supremum of all  $t \in [0,1]$ with the property that  there exists an $\alpha_t$-adapted list where $\alpha_t$ denotes the  restriction of  $\alpha $ to  $[0,t]$. By the  claim $c>0$.
It remains to be shown that $c=1$.
Otherwise, by the claim, there exists $\epsilon >0$ such that $\alpha ((c-\epsilon,c))$ is contained in the relative interior of a cell $C$ and  $\alpha ((c,c+\epsilon))$ is contained in the relative interior of a cell $D$. Let $h,k\in G$ be such that $C\subseteq h(P)$ and $D\subseteq k(P)$.
If $(a_0=0, g_1, a_1, \dots, g_n,a_n=c-\epsilon)$ is an $\alpha_{c-\epsilon} $-adapted list then   $(a_0=0, g_1, a_1, \dots, g_n,c-\epsilon, h,c,k,c+\epsilon)$ is an $\alpha_{c+\epsilon}$-adapted list, contradicting the maximality of $c$.
\end{proof}

Let $x,y \in \XQ$ and
let
   $$\mathcal{C}_{x,y}([a,b],\XQ_{\te})=\{\alpha\in \mathcal{C}([a,b],\XQ_{\te}): \alpha (a) =x, \alpha (b)=y.\}$$
On $\mathcal{C}_{x,y}([a,b],\XQ_{\te}) $ we consider the metric $d$ defined as follows. If
$\alpha,\beta\in \mathcal{C}([a,b],\XQ_{\te})$ then $d(\alpha,\beta)=\max(d(\alpha(c),\beta(c)):c\in [a,b]\}$.
Assume also that both $x$ and $y$ belong to the interior of some tile of $\te$.
By Lemma~\ref{IndependencyAdaptedL} and Lemma~\ref{AdaptedExistenceL} there is a well defined map
$$\Phi :  \Gamma  \rightarrow \Delta $$
given by
 $$\Phi (\alpha ) =\Phi (\mathcal{L}),$$
for $\mathcal{L}$ an $\alpha$-adapted list.
The next aim is proving that the map $\Phi:\mathcal{C}_{x,y}([0,1],\XQ_{\te})\rightarrow \Delta$ is constant.
To do so,  we first prove a strong simply connected property on $\XQ_{\te}$ with respect to the elements of $\mathcal{C}_{x,y}([a,b],\XQ_{\te}) $.
Recall that if $\alpha,\beta\in \mathcal{C}_{x,y}([a,b],\XQ_{\te})$ then a homotopy from $\alpha$ to $\beta$ is a continuous function $H:[a,b]\times [a,b]\rightarrow \XQ_{\te}$ such that $H(a,t)=\alpha(t)$, $H(b,t)=\beta(t)$, $H(t,0)=x$ and $H(t,1)=y$ for every $t\in [0,1]$.
We say that $\alpha$ and $\beta$ are \emph{strongly homotopic} if there is an homotopy from $\alpha$ to $\beta$ such that $H(t,-)\in \mathcal{C}_{x,y}([a,b],\XQ_{\te})$ for every $t\in [a,b]$.
Clearly, this defines an equivalence relation on $\mathcal{C}_{x,y}([a,b],\XQ_{\te})$.
Moreover, this equivalence relation is preserved by concatenation.
More precisely, if  $\alpha\in \mathcal{C}_{x,y}([a,b],\XQ_{\te})$ and $\beta\in \mathcal{C}_{y,z}([a,b],\XQ_{\te})$ then the concatenation of $\alpha$ and $\beta$ is  the function $\alpha\oplus \beta:[a,b]\rightarrow \XQ_{\te}$ defined by
   $$(\alpha\oplus \beta)(t)=
        \left\{
            \matriz{{ll} \alpha (x +2(t-a)(\frac{y-x}{b-a})), & \text{if } a\le t \le \frac{a+b}{2}; \\
                              \beta (y+2(t-\frac{a+b}{2})(\frac{y-z}{b-a}) ), & \text{if } \frac{a+b}{2} \le t \le b .}
        \right.
$$
If $\alpha_i,\beta_i \in \mathcal{C}_{x,y}([a,b],\XQ_{\te})$ are so that $\alpha_i$ and $\beta_i$ are strongly homotopic for $i\in \{1,2\}$ then $\alpha_1\oplus \beta_1$ and $\alpha_2\oplus \beta_2$ are strongly homotopic.

\begin{lemma}\label{StrongHomotopicL}
All the elements of $\mathcal{C}_{x,y}([a,b],\XQ_{\te})$ are strongly homotopic.
\end{lemma}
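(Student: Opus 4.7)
The plan is to prove the lemma by showing that every $\alpha\in \mathcal{C}_{x,y}([a,b],\XQ_{\te})$ is strongly homotopic to a fixed reference path $\gamma_0\in \mathcal{C}_{x,y}([a,b],\XQ_{\te})$. Since strong homotopy is an equivalence relation, this will suffice. For $\gamma_0$ I would take a piecewise-geodesic path $[x,z_0]\oplus[z_0,y]$, where $z_0$ is a point in the interior of the tile containing $x$ chosen generically enough that the geodesic triangle with vertices $x,z_0,y$ avoids every cell of codimension at least $3$. The existence of such a $z_0$ will follow from a Baire-category argument, using that the set of codimension-$\geq 3$ cells is countable and locally finite.

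The main technical ingredient is the following \emph{shortcut lemma}: if $p,q,r\in \XQ$ are such that the geodesic triangle
$$T(p,q,r) \;=\; \bigcup_{s\in[p,r]} [q,s]$$
is contained in $\XQ_{\te}$ (in the spherical case one also requires pairwise non-antipodality), then the path $[p,q]\oplus[q,r]$ is strongly homotopic to the path $[p,m]\oplus[m,r]$, where $m$ is the midpoint of $[p,r]$. The homotopy is obtained by sliding $q$ to $m$ along $[q,m]$: for $t\in[0,1]$, set $q_t$ equal to the point on $[q,m]$ at fraction $t$ of the distance, and let $H(t,-)=[p,q_t]\oplus[q_t,r]$. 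Each $H(t,-)$ is visibly piecewise geodesic with the same endpoints as $\alpha$, and its image lies in $T(p,q,r)\subseteq \XQ_{\te}$. Iterating, any piecewise-geodesic path $[p_0,p_1,\dots,p_n]$ can be shortened to $[p_0,p_n]$ through strong homotopies whenever all the intermediate triangles $T(p_{i-1},p_i,p_{i+1})$ lie in $\XQ_{\te}$.

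Given $\alpha\in \mathcal{C}_{x,y}([a,b],\XQ_{\te})$, write $\alpha$ as a concatenation of geodesic segments through breakpoints $x=x_0,x_1,\dots,x_n=y$. By refining the partition if necessary, I may assume each segment lies inside a small convex neighborhood meeting only finitely many cells of $\te$. Appealing to Lemma~\ref{ConnectedCod2L} (or more precisely to the Baire-type argument behind its proof), I would perturb each interior breakpoint $x_i$ by an arbitrarily small amount so that: (i) every $x_i$ still lies in $\XQ_{\te}$; (ii) every triangle $T(x_{i-1},x_i,x_{i+1})$ is disjoint from the finitely many codimension-$\geq 3$ subspaces spanned by the cells meeting the chosen neighborhood. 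Such small perturbations can themselves be realized by strong homotopies (linearly sliding each $x_i$ within a convex neighborhood avoiding the bad subspaces, and applying the shortcut lemma locally). Once the breakpoints are in general position, I apply the shortcut lemma repeatedly to absorb them, obtaining a strong homotopy from $\alpha$ to a ``straightened'' path which in turn is strongly homotopic to $\gamma_0$ by one more application of the same procedure.

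The main obstacle is the generic-position step: one must verify that sufficiently small perturbations of a vertex $x_i$ make the triangle $T(x_{i-1},x_i,x_{i+1})$ disjoint from any fixed codimension-$\geq 3$ subspace. The key dimension count is that a $2$-dimensional triangle and a codimension-$\geq 3$ subspace are in complementary position (since $2+(n-3)<n$), so a generic small perturbation of $x_i$ removes any intersection; combined with the local finiteness of $\te$ along the compact image of $\alpha$ and Baire's theorem, only countably many ``bad'' positions of $x_i$ must be avoided. Some care is needed in the spherical case to avoid antipodal pairs among $x_{i-1},x_i,x_{i+1}$, but this is automatic once the perturbation is small enough relative to the diameter of the neighborhoods chosen in the subdivision step.
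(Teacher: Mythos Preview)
Your shortcut lemma and the Baire/dimension count are correct and are essentially the ingredients the paper uses. The genuine gap is in the iteration. You claim that $[p_0,p_1,\dots,p_n]$ can be shortened to $[p_0,p_n]$ once all the \emph{consecutive} triangles $T(p_{i-1},p_i,p_{i+1})$ lie in $\XQ_{\te}$. But after one shortcut (removing $p_1$) the path is $[p_0,p_2,p_3,\dots,p_n]$, and removing $p_2$ now requires $T(p_0,p_2,p_3)\subseteq\XQ_{\te}$, which is not one of the original consecutive triangles and has no reason to avoid the codimension\nobreakdash-$\ge 3$ cells. The same problem recurs at every later step, so the straightening cannot be completed from your hypothesis. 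Even if it did go through, its output would be the single geodesic $[x,y]$, which need not lie in $\XQ_{\te}$ at all. The perturbation step suffers from the same circularity: sliding $x_i$ to a nearby $x_i'$ as a strong homotopy already requires the triangles $T(x_{i-1},x_i,x_i')$ and $T(x_i',x_i,x_{i+1})$ to lie in $\XQ_{\te}$, which is precisely the kind of condition you are trying to arrange.

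The paper sidesteps this by choosing a single auxiliary point $w$ that is generic not just for $x$ and $y$ but for \emph{all} segments of $\alpha$ and of $\beta$ simultaneously: for each geodesic line $L$ and each subspace $V$ of codimension $\ge 3$ one has $\GEN{L,V}$ of positive codimension, so Lemma~\ref{ConnectedCod2L} yields a $w$ outside every such span, and then every cone from $w$ over any original segment lies in $\XQ_{\te}$. The induction replaces the last breakpoint of each path by $w$ (two applications of your shortcut lemma, each over a cone from $w$ based on an \emph{original} segment) and reduces to $n-1$ segments; no new triangles ever appear. Your argument can be repaired along the same lines: choose $z_0$ generic with respect to every segment of $\alpha$, insert $z_0$ between each pair of consecutive breakpoints (each insertion is a shortcut over a cone from $z_0$, hence good), and then remove each original $p_i$ using only the degenerate triangle $T(z_0,p_i,z_0)$.
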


\begin{proof}
Let $V$ and $W$ be two subspaces of $\XQ$ of dimension $m$ and $n$ respectively and let $\GEN{V,W}$ denote the smallest subspace of $\XQ$ containing $V$ and $W$.
If $V\cap W\ne\emptyset$ then the dimension of $\GEN{V,W}$ is at most $m+n$. Otherwise the dimension of $\GEN{V,W}$ is at most $m+n+1$.
Indeed, let $w\in W$ and let $V_1$ be the smallest subspace of $\XQ$ containing $V$ and $w$. Then $V_1$ has dimension $m+1 $, $\GEN{V,W}=\GEN{V_1,W}$ and the dimension of this space is at most $m+n+1$, because $V_1\cap W\ne\emptyset$.

Let $L$ be a geodesic line of $\XQ$ and let $V$ be a geodesic subspace of $\XQ$ of codimension at least 3. Then, by the previous,  $\GEN{L,V}$ has positive codimension.
Therefore, if $\{L_i:i\in I\}$ is a countable family of geodesic lines and $\{V_j:j\in J\}$ is a countable family of subspaces of codimension at least 3 then, by Lemma~\ref{ConnectedCod2L},  $\cup_{i\in I,j\in J} \GEN{L_i,V_j}$  is a proper subset of $\XQ$.
Using this for the case when $\{V_j:j\in J\}$ is the family of subspaces generated by the cells of tiles of $\te$ of codimension at least 3, we deduce that for every countable family $S=\{[x_i,y_i]:i\in I\}$ of geodesic segments contained in $\XQ_{\te}$, there exists  $w\in \XQ_{\te}\setminus \cup_{i\in I,j\in J} \GEN{L_i,V_j}$, where $L_i$ denotes the geodesic line containing
$[x_i,y_i]$.
This implies that the intersection of $\GEN{L_i,V_j}$ with the geodesic plane containing both  $L_i$ and $w$ is contained in $L_i$.
Thus, the cone of $[x_i,y_i]$ with vertex $w$ does not intersect any $V_j$ and hence it is contained in $\XQ_{\te}$.
(Observe that if $\XQ$ is spherical then the antipode of $w$ is not in any $[x_i,y_i]$ because $w\not\in L_i$.)

Let $x',y'\in \XQ$ with $x'\ne y'$ and let $\rho=d(x',y')$.
Let $\alpha_0 :[0,1]\rightarrow [x',y']$ be the parametrization of a segment $[x',y']$  of constant speed, that is $\alpha_0$ is the inverse of the map $z\in [x',y'] \mapsto \frac{d(x',z)}{\rho}\in [0,1]$.
Assume $\alpha$ is an arbitrary parametrization of the segment $[x',y']$.
Consider the function
$H:[0,1]\times [0,1]\rightarrow [x',y']$ defined  by
	$$H(s,t)=\alpha_0\left(\frac{d(x',\alpha(t))}{\rho}+s\left(t-\frac{d(x',\alpha(t))}{\rho}\right)\right).$$
As both the distance function $d$ and $\alpha$ are continuous, $H$ is a continuous function.
Thus $H(0,t)=\alpha(t)$ and $H(1,t)=\alpha_0(t)$.
Furthermore $H(s,0)=\alpha_0(0)= x'$ and $H(s,1)=\alpha_0(1)=y'$.
Moreover, as $\alpha$ is a parametrization, the function $t\mapsto d(x',\alpha(t))$ is non-decreasing.
Therefore, for every $s\in [0,1]$, the function
$t\mapsto \frac{d(x',\alpha(t))}{\rho}+s\left(t-\frac{d(x',\alpha(t))}{\rho}\right)=st+\frac{(1-s)d(x',\alpha(t))}{\rho}$ is non-decreasing.
Hence
	$$t\mapsto d(x',H(s,t))=d\left(x',\alpha_0\left(\frac{d(x',\alpha(t))}{\rho}+s\left(t-\frac{d(x',\alpha(t))}{\rho}\right)\right)\right)$$
is non-decreasing too. Therefore $H(s,-)$ is a parametrization of $[x',y']$, for every $s\in [0,1]$.
As the image of $H$ is $[x',y']$, we have showed that   $H$ is a strong homotopy between $\alpha$ and $\alpha_0$.

Without loss of generality, we may assume that $[a,b]=[0,1]$.
Let $\alpha,\beta\in \mathcal{C}_{x,y}([0,1],\XQ_{\te})$. We need to show that $\alpha$ and $\beta$ are strongly homotopic.
Clearly, there is an ascending finite list $0=a_0<a_1<\dots a_n=1$ such that the restrictions to $[a_{i-1},a_i]$ of $\alpha$ and  $\beta$ are both parametrizations of segments.
By the previous paragraph, we may assume without loss of generality that the restriction to each segment $[a_{i-1},a_i]$ of $\alpha$ and $\beta$ has constant speed.
We now argue by induction on $n$. If $n=1$ then $\alpha=\beta$ (because of the constant speed) and hence there is nothing to prove.
Let $x_i=\alpha(a_i)$ and $y_i=\beta(a_i)$ for $i\in \{ 0,1,\dots,n\}$.
By the discussion in the second paragraph of the proof,  there exists  $w\in \XQ_{\te}$ such that all the cones determined by the segments $[x_{i-1},x_i]$ and $[y_{i-1},y_i]$ and centered in $w$ are contained in $\XQ_{\te}$.
Let $\alpha_i$ denote the restriction of $\alpha$ to $[a_{i-1},a_i]$ and let $\beta_i$ denote the restriction of $\beta$ to $[a_{i-1},a_i]$.
Further, let $\alpha':[0,1]\rightarrow \XQ_{\te}$ be such that it agrees with $\alpha$ on $[0,a_{n-2}]$, the restriction of $\alpha'$ to $[a_{n-2},a_{n-1}]$ is a parametrization of the interval $[x_{n-2},w]$ of constant speed and the restriction of $\alpha'$ to $[a_{n-1},a_n]$ is a parametrization of $[w,y]$ of constant speed.
Similarly, let $\beta':[0,1]\rightarrow \XQ_{\te}$ agree with $\beta$ on $[0,a_{n-2}]$, its restriction to  $[a_{n-2},a_{n-1}]$ is a parametrization of the geodesic interval $[y_{n-2},w]$ of constant speed and the restriction of $\beta'$ to $[a_{n-1},1]$ is a parametrization of $[w,y]$ of constant speed.
By the induction hypothesis, the restrictions of $\alpha'$ and $\beta'$ to $[0,a_{n-1}]$ are strongly homotopic.
Furthermore $\alpha'$ and $\beta'$ coincide on $[a_{n-1},1]$ and hence $\alpha'$ and $\beta'$ are strongly homotopic.
It remains to prove that $\alpha$ and $\alpha'$ are strongly homotopic, and that so are $\beta$ and $\beta'$.
For this it is enough to prove that the restrictions of $\alpha$ and $\alpha'$ (respectively, $\beta$ and $\beta'$) to $[a_{n-2},a_n]$ are strongly homotopic.
This reduces the problem to the case where $n=2$ and the two geodesic triangles $x\alpha(a_1)\beta(a_1)$ and $\alpha(a_1)\beta(a_1)y$ are contained in $\XQ_{\te}$.
Let $\gamma:[0,1]\rightarrow [\alpha(a_1),\beta(a_1)]$ be a parametrization of $[\alpha(a_1),\beta(a_1)]$ of constant speed.
For every $s\in [0,a_1]$ let $t\rightarrow H(s,t)$ be the parametrization of $[x,\gamma(s)]$ of constant speed and, for $s\in [a_1,1]$, let $t\rightarrow H(s,t)$ be the parametrization of $[\gamma(s),y]$ of constant speed.
In other words, if $0\le t \le a_1$ then $H(s,t)$ belongs to the geodesic segment $[x,\gamma(s)]$ and
  \begin{eqnarray}\label{HomContEq1}
  d(x,H(s,t))&=&d(x,\gamma(s))\frac{t}{a_1} .
  \end{eqnarray}
On the other hand, if $a_1\le t \le 1$ then $H(s,t)$  belongs to the geodesic segment $[\gamma(s),b]$ and
  \begin{eqnarray}\label{HomContEq2}
  d(\gamma(s),H(s,t))&=&d(\gamma(s),y)\frac{t-a_1}{1-a_1} .
  \end{eqnarray}
Clearly $H(s,-)\in \mathcal{C}_{x,y}([0,1],\XQ_{\te})$ for every $s\in [0,1]$, $H(0,-)=\alpha$ and $H(1,-)=\beta$.
Finally, it is easy to see that the function $H$ is continuous.
\end{proof}

\begin{lemma}\label{PhiConstantL}
If both $x$ and $y$ belong to the interior of some tile then  $\Phi:\mathcal{C}_{x,y}([a,b],\XQ_{\te}) \rightarrow \Delta$ is a constant mapping.
\end{lemma}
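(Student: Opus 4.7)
The plan is to combine Lemma~\ref{StrongHomotopicL} with a local-constancy argument. Given any $\alpha, \beta \in \mathcal{C}_{x,y}([a,b], \XQ_{\te})$, Lemma~\ref{StrongHomotopicL} yields a strong homotopy $H : [a,b] \times [a,b] \to \XQ_{\te}$ between them. Setting $F(s) = \Phi(H(s,-))$, I will show that $F$ is locally constant on the connected interval $[a,b]$; this forces $F$ to be constant, and hence $\Phi(\alpha) = F(a) = F(b) = \Phi(\beta)$.

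To establish local constancy at a fixed $s_0 \in [a,b]$, let $\alpha_0 = H(s_0, -)$ and fix an $\alpha_0$-adapted list $\mathcal{L} = (c_0, g_1, c_1, \ldots, g_n, c_n)$. For each $1 \leq i \leq n-1$, let $D_i$ be the cell of $\te$ generated by $\alpha_0(c_i)$; since $\alpha_0(c_i) \in \XQ_{\te}$, the codimension of $D_i$ is at most $2$. Using local finiteness of $\te$, I pick an open neighborhood $W_i$ of $\alpha_0(c_i)$ that meets only the finitely many tiles containing $D_i$. By uniform continuity of $H$ on the compact square $[a,b] \times [a,b]$, there exist $\eta > 0$ and $\delta > 0$ such that for every $s$ with $|s - s_0| < \delta$ we have $H(s, [c_i - \eta, c_i + \eta]) \subseteq W_i$ for each $i$, while $H(s, [c_{i-1} + \eta, c_i - \eta])$ lies in a neighborhood meeting only tiles that contain $C_i$, where $C_i$ denotes the cell of $g_i(P)$ in whose relative interior $\alpha_0((c_{i-1}, c_i))$ lies.

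For each such $s$, I apply Lemma~\ref{AdaptedExistenceL} on each window $[c_i - \eta, c_i + \eta]$ and each long interval $[c_{i-1} + \eta, c_i - \eta]$ to obtain an $H(s,-)$-adapted list $\mathcal{L}_s$. The product $\Phi(\mathcal{L}_s)$ then decomposes into contributions from these pieces, and I will argue that each window-contribution collapses to $\kappa_{\alpha_0(c_i)}(g_i, g_{i+1})$ while each long-interval contribution reduces to the trivial factor, yielding $\Phi(\mathcal{L}_s) = \Phi(\mathcal{L})$. The key telescoping mechanism is that every cell meeting the window $W_i$ contains $D_i$, so by Lemma~\ref{kappaElementaryL}(\ref{kappaIndependence}) each $\kappa$-value on the window may be computed on $D_i$, and by part~(\ref{kappaTransitiveCell}) the successive factors telescope to $\kappa_{D_i}(g_i, g_{i+1})$.

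The main obstacle is the long intervals in the case where the original path $\alpha_0$ runs along a side or edge rather than through a tile interior: the perturbed path $H(s,-)$ can then oscillate between the tiles adjacent to this lower-dimensional cell, producing many breakpoints in the local adapted list, each contributing a nontrivial $\kappa$-factor. Showing that these factors telescope to the identity again requires observing that every cell appearing in the local list contains $C_i$, so Lemma~\ref{kappaElementaryL}(\ref{kappaIndependence}) and~(\ref{kappaTransitiveCell}), together with the transitivity identity~(\ref{kappaTransitive}), force the product to collapse. This bookkeeping is the technical heart of the proof.
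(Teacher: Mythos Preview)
Your overall strategy---reduce to local constancy along a strong homotopy and telescope via Lemma~\ref{kappaElementaryL}---is exactly the paper's approach, and you have correctly identified the key ingredients (local finiteness for the choice of neighborhoods, and the containment $D_i\subseteq C_i$ that makes the telescoping work). However, the specific bookkeeping you announce does not hold as stated and would not close the argument.

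The claim that ``each window-contribution collapses to $\kappa_{\alpha_0(c_i)}(g_i,g_{i+1})$ while each long-interval contribution reduces to the trivial factor'' is false in general. When $C_i$ is a side or an edge, the perturbed path $H(s,-)$ may enter the long interval in a tile $h\ne g_i$ and leave it in a tile $h'\ne g_i$; the telescoped long-interval contribution is then $\kappa_{C_i}(h,h')$, which need not equal $1$, and correspondingly the adjacent window contribution is $\kappa_{D_i}(h',h'')$ rather than $\kappa_{D_i}(g_i,g_{i+1})$. You acknowledge the obstacle but your proposed fix (``the product collapses to the identity'') repeats the erroneous conclusion.

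The paper handles this by a different grouping. Instead of separating windows from long intervals, it inserts the points $a'_i=c_i-\eta$ and $a''_i=c_i+\eta$ into the $\beta$-adapted list, records the group element $h_{j_i}$ sitting at each $a'_i$, and observes that $C_i\subseteq h_{j_i}(P)$. This makes $\mathcal{L}'=(a_0,h_{j_1},a_1,h_{j_2},\dots,h_{j_n},a_n)$ an \emph{$\alpha$-adapted} list, so $\Phi(\mathcal{L})=\Phi(\mathcal{L}')$ by Lemma~\ref{IndependencyAdaptedL}. The remaining task is to show $\Phi(\mathcal{L}')=\Phi(\mathcal{D})$, and this is where the telescoping actually lands: the block of $\kappa$-factors between indices $j_{i-1}$ and $j_i$ telescopes, using the single cell $E=D_{i-1}$ generated by $\alpha_0(c_{i-1})$, to $\kappa_{E}(h_{j_{i-1}},h_{j_i})=\kappa_{\alpha_0(c_{i-1})}(h_{j_{i-1}},h_{j_i})$---exactly the $i$-th factor of $\Phi(\mathcal{L}')$. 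The crucial point you are missing is the intermediary list $\mathcal{L}'$: the telescoping does not recover the original $g_i$'s, it recovers the $h_{j_i}$'s, and one then needs Lemma~\ref{IndependencyAdaptedL} to pass back to $\Phi(\mathcal{L})$. Once you reorganize your argument around this intermediary, your sketch becomes the paper's proof.
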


\begin{proof}
Again, without loss of generality, we may assume that $[a,b]=[0,1]$.
We claim that it is sufficient to show that $\Phi$ is locally constant.
Indeed, assume this is the case and let  $\alpha,\beta\in \mathcal{C}_{x,y}([0,1],\XQ_{\te})$.
By Lemma~\ref{StrongHomotopicL} there is a strong homotopy $H$ from $\alpha$ to $\beta$.
Let $c$ denote  the supremum of the $s\in [0,1]$ for which $\Phi(H(s,-))=\Phi(\alpha)$.
Since, by assumption,  $\Phi$ is constant in a neighborhood of $H(x,-)$, it easily follows that $c=1$ and thus $\Phi(\alpha)=\Phi(\beta)$.

To prove that $\Phi$ is locally constant, we show that for every $\alpha\in \mathcal{C}_{x,y}([0,1],\XQ_{\te})$ and for every  $\alpha$-adapted list $\mathcal{L}=(a_0,g_1,a_1,\dots,g_n,a_n)$ (which exists because of
Lemma~\ref{AdaptedExistenceL}), there is a positive real number $\delta$ such that for every
$\beta\in \mathcal{C}_{x,y}([0,1],\XQ_{\te})$ with $d(\alpha,\beta)<\delta$ there is a $\beta$-adapted list
$\mathcal{D}=(b_0,h_1,b_1,\dots,h_m,b_m)$ and an increasing sequence of integers $j_0=0<j_1<j_2<\dots < j_{n-1}<j_n=m$ such that
   $\mathcal{L}'=(a_0,h_{j_1},a_1,h_{j_2},a_2,\dots,h_{j_{i-1}},a_{i-1},h_{j_n},a_n)$
is an $\alpha$-adapted list and, for every $i\in \{ 0,1,\ldots , n\}$,
   $\Phi((\mathcal{L}')_i)=\Phi((\mathcal{D})_i)$,
where
     $(\mathcal{L}')_i=(a_0,h_1,a_1,\dots,h_i,a_i)$
and
     $(\mathcal{D})_i=(b_0,h_1,b_1,\dots,h_{j_i},b_{j_i})$.
In particular, by Lemma~\ref{IndependencyAdaptedL}, we have
    $\Phi(\alpha)=\Phi(\mathcal{L})=
      \Phi(\mathcal{L}')=\Phi((\mathcal{L}')_n)=
       \Phi((\mathcal{D})_n)=\Phi(\mathcal{D})=\Phi(\beta)$,
as desired.

Since $\te$ is locally finite, there is $\delta_1>0$ such that for every $i\in \{1,\dots,n-1\}$ and every $g\in G$, if $B(\alpha(a_i),2\delta_1))\cap g(P)\ne \emptyset$ then $\alpha(a_i)\in g(P)$.
Since $\alpha$ is continuous there is $\epsilon<\min \left\{ \frac{a_i-a_{i-1}}{2}: i\in\{1,\dots,n\} \right\}$ such that, for every $i\in \{ 0,1,\ldots ,  n\}$, $d(\alpha(t),\alpha(a_i))<\delta_1$ for every $t$ with $|t-a_i|<\epsilon$.
For every $i\in \{1,\dots,n-1\}$, let $a'_i=a_i-\epsilon$ and $a''_i=a_i+\epsilon$. We also set $a'_n=1$ and $a''_0=0$.
Observe that $a''_{i-1}\le a'_i$ for every $i\in \{1,\dots,n\}$.
Each $\alpha([a''_i,a'_{i+1}])$ is compact and it is contained in the relative interior of a cell $C_i$ contained in $g_i(P)$.
Using again that $\te$ is locally finite we obtain a positive number $\delta_2$ such that $d(\alpha(t),g(P))>\delta_2$ for every $t\in [a''_i,a'_{i+1}]$ and every $g\in G$ with $C_i\not\subseteq g(P)$.
Let $\delta=\min\{\delta_1,\delta_2\}$.
We will prove that $\delta$ satisfies the desired property.
Let $\beta\in \mathcal{C}_{x,y}([0,1],\XQ_{\te})$ with $d(\alpha,\beta)<\delta$.
Then $d(\alpha(t),\beta(t))<\delta$ for every $t\in [0,1]$.
In particular,
\begin{equation}\label{Neighbour}
 \text{if } t\in (a'_i,a''_i) \text{ and } \beta(t)\in g(P) \text{ then } \alpha(a_i)\in g(P)
\end{equation}
because $d(\beta(t),\alpha(a_i))< 2\delta_1$.
Moreover,
\begin{equation}\label{Closed}
 \text{if } t\in [a''_{i-1},a'_i] \text { and } \beta(t)\in g(P) \text{ then }C_i \subseteq g(P),
\end{equation}
since $d(\alpha(t),\beta(t))<\delta_2$.
Furthermore
\begin{equation}\label{Interior}
 \text{if } C_i \text{ is a tile then }\beta([a''_{i-1},a'_i])  \subseteq C_i^\circ=g_i(P)^\circ.
\end{equation}
Indeed, as $\alpha([a''_{i-1},a'_i])\subseteq \alpha((a_{i-1},a_i))\subseteq C_i^\circ$, it follows that $C_i=g_i(P)$ is the only tile intersecting $\alpha([a''_{i-1},a'_i])$ and therefore it also is the only tile intersecting $\beta([a''_{i-1},a'_i])$. Then (\ref{Interior}) follows.

Let $\mathcal{D}=(b_0,h_1,b_1,\dots,h_m,g_m)$ be a $\beta$-adapted list.
We enlarge $\mathcal{D}$ by inserting each $a'_i$ and $a''_i$. More precisely, if we rename the list $(a'_1,a''_1,a'_2,a''_2,\dots,a'_{n-1},a''_{n-1})=(c_1,c_2,\dots,c_{2(n-1)})$, then we insert in $\mathcal{D}$ the sequence $(h_j,c_i,h_j,c_{i+1},\dots,h_j,c_k)$ between $b_{j-1}$ and $h_j$ for every $i$ whenever
$c_{i-1}\le b_{j-1} < c_i< \dots <c_k <b_j \le c_{k+1}$.
So we may assume without loss of generality that there is an ascending sequence $0=j'_0<j_1<j'_1<j_2<j'_2<\dots<j_{n-1}<j'_{n-1}<j_n=m$ such that
for every $i\in \{1,\dots,n\}$ $a'_i=b_{j_i}$ and for every $i\in \{0,1,\dots,n-1\}$ we have $a''_i=b_{j'_i}$.
We claim that $\mathcal{L}'=(a_0,h_{j_1},a_1,\dots,h_{j_n},a_n)$ is an $\alpha$-adapted list.
For that observe that $[b_{j_i-1},b_{j_i}]\subseteq [a''_{i-1},a'_i]$ and $\beta((b_{j_i-1},b_j))$ is contained in $h_{j_i}(P)$.
Therefore $C_i\subseteq h_{j_i}(P)$, by (\ref{Closed}).

It remains to prove that $\Phi((\mathcal{L}')_i)=\Phi((\mathcal{D})_i)$ for every $i\in \{1,\dots,n\}$. We argue by induction.
As $\alpha(0)\in g_1(P)^\circ$, necessarily $C_1=g_1(P)$.
Using (\ref{Interior}) it is easy to prove that $\beta([a''_0=0,b_{j_1}=a'_1])\subseteq g_1(P)$.
Hence $h_j= g_1$ for every $j\in \{1,\dots,j_1\}$. Therefore $\Phi((\mathcal{L}')_1)=1=\Phi((\mathcal{D})_1)$.
Assume that $i>1$ and $\Phi((\mathcal{L}')_{i-1})=\Phi((\mathcal{D})_{i-1})$.
Let $E$ be the cell generated by $\alpha(a_{i-1})$, i.e. the unique one whose relative interior contains $\alpha(a_{i-1})$.
For every $j\in \{ 1,\dots,m\}$ let
$E_j$ be the cell generated by $\beta(b_j)$.
Then $E\subseteq C_i$, $\alpha(a_{i-1})\in E_j$ for every $j_{i-1}\le j \le j'_{i-1}$, by (\ref{Neighbour}) and $C_i$ is contained in every cell intersecting some $\beta([a''_{i-1},a'_i])$, by (\ref{Closed}).
Therefore, for every $j'_{i-1}\le j \le j_i$, $C_i$ is contained in every tile containing $\beta(b_j)$ and hence $E\subseteq C_i \subseteq E_j$.
We conclude that $E\subseteq E_j$ for every $j_{i-1}\le j \le j_i$.
Using that $E_j\subseteq h_j(P)\cap h_{j+1}(P)$ we have
	\begin{eqnarray*}
	\kappa_{\alpha(a_{i-1})}(h_{j_{i-1}},h_{j_i}) &=& \kappa_E(h_{j_{i-1}},h_{j_i}) =
	\kappa_E(h_{j_{i-1}},h_{j_{i-1}+1}) \cdots \kappa_E(h_{j_i-1},h_{j_i}) \\
	&=&
	\kappa_{E_{j_{i-1}}}(h_{j_{i-1}},h_{j_{i-1}+1}) \cdots \kappa_{E_{j_i-1}}(h_{j_i-1},h_{j_i}) \\
	&=& \kappa_{\beta(b_{j_{i-1}})}(h_{j_{i-1}},h_{j_{i-1}+1}) \cdots \kappa_{\beta(h_{j_i-1})}(h_{j_i-1},h_{j_i})
	\end{eqnarray*}
by Lemma~\ref{kappaElementaryL}.(\ref{kappaTransitiveCell}).
Then
\begin{eqnarray*}
\Phi((\mathcal{L}')_i)&=&\Phi((\mathcal{L}')_{i-1})\kappa_{\alpha(a_{i-1})}(h_{j_{i-1}},h_{j_i})\\
&=&\Phi((\mathcal{D})_{i-1})\kappa_{\beta(b_{j_{i-1}})}(h_{j_{i-1}},h_{j_{i-1}+1}) \cdots \kappa_{\beta(b_{j_i-1})}(h_{j_i-1},h_{j_i}) \\
&=&\Phi((\mathcal{D})_i),
\end{eqnarray*}
as desired.
\end{proof}

\begin{proof} ({\bf of Theorem~\ref{PoincareT} (Relations))}.
Let $g_1,\dots,g_n$ be a list of pairing transformations such that $g_1\cdots g_n=1$. We have to show that $[g_1]\cdots [g_n]=1$.
We may assume that $g_1 \cdots g_n=1$ is a minimal relation, i.e. $g_i \cdots g_j\ne 1$ for every $1\le i < j \le n$ with $(i,j)\ne (1,n)$.
For every $i\in \{1,\dots,n\}$ let $S_i=g_1\cdots g_{i-1}(P)\cap g_1 \cdots g_i(P)$, a side of both $g_1\cdots g_{i-1}(P)$ and $g_1 \cdots g_i(P)$.

Fix $c_0\in P^{\circ}$ and for every $i\in \{1,\dots,n\}$ let $b_i\in (P\cap g_i(P))^r$ and $c_i =g_1 g_2 \cdots g_{i-1} (b_i )$.
Observe that each $c_i\in S_i$ for $i\geq 1$.
If the sides $S_{i}$ and $S_{i+1}$ are contained in different essential hyperplanes of the tile
$g_1 g_2 \cdots g_i (P)$ then $(c_i , c_{i+1}) \subseteq (g_1 g_2 \cdots g_i )(P)^{\circ}$.
However, they might be in the same essential hyperplane, and therefore
we introduce some additional elements of $\XQ$.
For each $i\in \{ 1, 2, \ldots , n-1\}$, choose $c_i'\in g_1 g_2 \cdots g_i (P)^{\circ}$.
Consider the geodesic segments,
  $$
	[c_0,c_1=b_1],\;  [c_1=b_1,c_1'],\; [c_1', c_2=g_1(b_2)],\;
  	  [c_2,c_2'], \; [c_2',c_3=g_1g_2(b_3)],\dots ,
	$$
	$$
	  [c_{n-1}=g_1 \cdots g_{n-2} (b_{n-1}),c_{n-1}']\; [c_{n-1}', c_{n}=g_1 g_2 \cdots g_{n-1}(b_n)],\;
	  [c_n,c_{n+1}=c_0].
	$$
By construction, for each $1\leq i<n$,
  $$(c_0 ,c_1)\subseteq P^{\circ}, (c_i ,c_i')\subseteq g_1 \cdots g_i (P)^{\circ},\\
    (c_{i}',c_{i+1})\subseteq g_1 \cdots g_i (P)^{\circ}, (c_{n},c_{0})\subseteq P^{\circ}.$$
Furthemore, the closure of each of the listed geodesic segments is contained in $\XQ_{\te}$.
Let $\alpha : [0,1]\rightarrow \XQ_{\te}$ be the continuous function whose graph is obtained by concatenating all these segments.
Then there exists an ascending sequence
  $$a_0 < a_1 < a_1' < a_2 < a_2' <a_3 < \cdots < a_{n-1} < a_{n-1}' < a_{n} <a_{n+1}=1$$
with $\alpha (a_i) =c_i$ and $\alpha (a_i') =c_i'$ for each $i$. Furthermore,
 $$\alpha ((a_{i-1},a_i')) =(c_i , c_i') \subseteq g_1 \cdots g_i (P)^{\circ} \quad \text{ and } \quad
   \alpha ((a_i',a_{i+1}))=(c_i',c_{i+1})\subseteq g_1 \cdots g_i (P)^{\circ}.$$
Therefore,
 \begin{eqnarray*}
 \mathcal{L} &=&(a_0, 1, a_1 , g_1 , a_1', g_1 ,a_2, g_1 g_2, a_2', g_1 g_2, a_2, g_1 g_2 g_3 a_3' , \\
  && \;\;  \cdots , g_1 g_2 \cdots g_{n-1} , a_{n-1}', g_1\cdots g_{n-1} a_n , g_1 \cdots g_n , a_{n+1})
  \end{eqnarray*}
is an $\alpha$-adapted list.
Thus,
 \begin{eqnarray*}
 \Phi(\alpha)&=& [g_1]\,  [g_1\inv g_1]\, [ g_1\inv  (g_1g_2)]\,
      [(g_1g_2)\inv (g_1 g_2)]\,   [(g_1g_2)\inv (g_1 g_2 g_3)]\\
      && \;\; \cdots
       [(g_1\cdots g_{n-1})\inv (g_1\cdots g_n-1)]\,
        [(g_1\cdots g_{n-1})\inv (g_1\cdots g_n)]\\
      & =& [g_1][g_2]\cdots [g_n] .
 \end{eqnarray*}
On the other hand,  let $\beta\in \mathcal{C}_{c_0,c_0}([0,1],\XQ_{\te})$ denote  the constant path, i.e. $\beta(t)=c_0$ for every $t\in [0,1]$.
Then $(0,1,1)$ is a $\beta$-adapted list and $\Phi(\beta)=1$.
Lemma~\ref{PhiConstantL} yields that
   $[g_1]\cdots [g_n]=\Phi(\alpha)=\Phi(\beta)=1$, as desired.
\end{proof}

\bibliography{ReferencesMSC}

\begin{thebibliography}{10}
\expandafter\ifx\csname url\endcsname\relax
  \def\url#1{\texttt{#1}}\fi
\expandafter\ifx\csname urlprefix\endcsname\relax\def\urlprefix{URL }\fi
\expandafter\ifx\csname href\endcsname\relax
  \def\href#1#2{#2} \def\path#1{#1}\fi

\bibitem{Poincare1882}
H.~Poincare, M\'emoire sur les fonctions fuchsiennes, Acta Math. 1~(1) (1882)
  193--294.

\bibitem{Poincare1883}
H.~Poincar{\'e}, Les groupes klein{\'e}ens, Acta Math. 3~(1) (1883) 49--92,
  m\'emoire.

\bibitem{beardon}
A.~F. Beardon, The geometry of discrete groups, Vol.~91 of Graduate Texts in
  Mathematics, Springer-Verlag, New York, 1995, corrected reprint of the 1983
  original.

\bibitem{bridson}
M.~R. Bridson, A.~Haefliger, Metric spaces of non-positive curvature, Vol. 319
  of Grundlehren der Mathematischen Wissenschaften [Fundamental Principles of
  Mathematical Sciences], Springer-Verlag, Berlin, 1999.

\bibitem{ElsGrunMen}
J.~Elstrodt, F.~Grunewald, J.~Mennicke, Groups acting on hyperbolic space,
  Springer Monographs in Mathematics, Springer-Verlag, Berlin, 1998, harmonic
  analysis and number theory.

\bibitem{Maskit}
B.~Maskit, Kleinian groups, Vol. 287 of Grundlehren der Mathematischen
  Wissenschaften [Fundamental Principles of Mathematical Sciences],
  Springer-Verlag, Berlin, 1988.

\bibitem{ratcliffe}
J.~G. Ratcliffe, Foundations of hyperbolic manifolds, 2nd Edition, Vol. 149 of
  Graduate Texts in Mathematics, Springer, New York, 2006.

\bibitem{EpsteinPetronio}
D.~B.~A. Epstein, C.~Petronio, An exposition of {P}oincar\'e's polyhedron
  theorem, Enseign. Math. (2) 40~(1-2) (1994) 113--170.

\bibitem{Maskit2}
B.~Maskit, On {P}oincar\'e's theorem for fundamental polygons, Advances in
  Math. 7 (1971) 219--230.

\bibitem{dRham}
G.~de~Rham, Sur les polygones g\'en\'erateurs de groupes fuchsiens,
  Enseignement Math. 17 (1971) 49--61.

\bibitem{AnaninGrossi}
S.~Anan'in, C.~H. Grossi,
  \href{http://dx.doi.org/10.1017/S0013091509001783}{Yet another {P}oincar\'e
  polyhedron theorem}, Proc. Edinb. Math. Soc. (2) 54~(2) (2011) 297--308.
\newblock \href {http://dx.doi.org/10.1017/S0013091509001783}
  {\path{doi:10.1017/S0013091509001783}}.
\newline\urlprefix\url{http://dx.doi.org/10.1017/S0013091509001783}

\bibitem{Sehgal}
S.~K. Sehgal, Units in integral group rings, Vol.~69 of Pitman Monographs and
  Surveys in Pure and Applied Mathematics, Longman Scientific \& Technical,
  Harlow, 1993, with an appendix by Al Weiss.

\bibitem{PitadelRioRuiz2005}
A.~Pita, {\'A}.~del R\'io, M.~Ruiz, Groups of units of integral group rings of
  {K}leinian type, Trans. Amer. Math. Soc. 357~(8) (2005) 3215--3237.

\bibitem{PitadelRio2006}
A.~Pita, {\'A}.~del R\'io, Presentation of the group of units of {$\Bbb
  ZD^-_{16}$}, in: Groups, rings and group rings, Vol. 248 of Lect. Notes Pure
  Appl. Math., Chapman \& Hall/CRC, Boca Raton, FL, 2006, pp. 305--314.

\bibitem{JesPitRioRui2007}
E.~Jespers, A.~Pita, {\'A}.~del R\'io, M.~Ruiz, P.~Zalesskii,
  \href{http://dx.doi.org/10.1016/j.aim.2006.11.005}{Groups of units of
  integral group rings commensurable with direct products of free-by-free
  groups}, Adv. Math. 212~(2) (2007) 692--722.
\newblock \href {http://dx.doi.org/10.1016/j.aim.2006.11.005}
  {\path{doi:10.1016/j.aim.2006.11.005}}.
\newline\urlprefix\url{http://dx.doi.org/10.1016/j.aim.2006.11.005}

\bibitem{macbeath}
A.~M. Macbeath, Groups of homeomorphisms of a simply connected space, Ann. of
  Math. (2) 79 (1964) 473--488.

\bibitem{BP91}
R.~Benedetti, C.~Petronio, Lectures on hyperbolic geometry, Universitext,
  Springer-Verlag, Berlin, 1992.

\bibitem{gromov}
M.~Gromov, \href{http://dx.doi.org/10.1007/978-1-4613-9586-7_3}{Hyperbolic
  groups}, in: Essays in group theory, Vol.~8 of Math. Sci. Res. Inst. Publ.,
  Springer, New York, 1987, pp. 75--263.
\newblock \href {http://dx.doi.org/10.1007/978-1-4613-9586-7_3}
  {\path{doi:10.1007/978-1-4613-9586-7_3}}.
\newline\urlprefix\url{http://dx.doi.org/10.1007/978-1-4613-9586-7_3}

\bibitem{mac}
C.~Maclachlan, A.~W. Reid, The arithmetic of hyperbolic 3-manifolds, Vol. 219
  of Graduate Texts in Mathematics, Springer-Verlag, New York, 2003.

\bibitem{alexandrov}
A.~Alexandrov, On tiling a space with polyhedra, Vol.~4 of in A. D. Alexandrov
  Selected Works, Classics of Soviet Mathematics, Gordon and Breach Publishers,
  1996.

\end{thebibliography}

\end{document}